\newlength{\abstand}
\DeclareFontFamily{OT1}{pzc}{}
\DeclareFontShape{OT1}{pzc}{m}{it}{<-> s * [1.150] pzcmi7t}{}
\DeclareMathAlphabet{\mathcal}{OT1}{pzc}{m}{it}
\def\F{{\mathds F}}
\def\R{{\mathds R}}
\def\C{{\mathds C}}
\def\N{{\mathds N}}
\def\Nn{\N_0}
\def\Np{\N_+}
\def\Q{{\mathds Q}}
\def\Z{{\mathds Z}}
\def\P{{\mathds P}}
\def\G{{\mathds G}}
\def\Ns{\str{\N}}
\def\Nns{\str{\Nn}}
\def\a{{\mathfrak a}}
\def\RR{{\bf R}}
\def\PP{{\bf P}}
\def\p{{\mathfrak p}}
\def\fX{{\mathfrak X}}
\def\fU{{\mathfrak U}}
\def\cF{\mathcal F}
\def\cG{\mathcal G}
\def\cH{\mathcal H}
\def\cI{\mathcal I}
\def\cO{\mathcal O}
\def\cP{\mathcal P}
\def\cp{\mathcal p}
\def\cR{\mathcal R}
\def\scR{\str{\cR}}
\def\SCH{\mathcal{Sch}}
\def\Sm{\mathcal{Sm}}
\def\SCHFP{\SCH^{\mathrm{fp}}}
\def\ALGSPC{\mathcal{AlgSpc}}
\def\ALGSPCFP{\ALGSPC^{fp}}
\def\Shv{\mathcal{Shv}}
\def\et{{\mathrm{\acute{e}t}}}
\def\Shvet{\Shv_\et}
\def\im{\mathrm{im}}
\newcommand\spec[1]{{\mathrm{Spec}\,(#1)}}
\newcommand\str[1]{{\mbox{}^*#1}}
\newcommand\Schfp[1]{\SCH^{\mathrm{fp}}_{#1}}
\newcommand\AlgSpcfp[1]{\ALGSPC^{\mathrm{fp}}_{#1}}
\newcommand\op[1]{{#1}^{\mathrm{op}}}
\newcommand\s[1]{N\,#1}
\newcommand\T[1]{T\,#1}
\newcommand\Hom[3]{\mathrm{Hom}_{#1}(#2,#3)}
\newcommand\Pic[1]{\mathrm{Pic}(#1)}
\newcommand\sPic[1]{\str{\Pic{#1}}}
\newcommand\kernel[1]{\mathrm{Ker}\,\left(#1\right)}
\newcommand\coker[1]{\mathrm{Coker}\,\left(#1\right)}
\newcommand\length[2]{l_{#1}\left(#2\right)}
\newcommand\slength[2]{\str{l_{#1}}\left(#2\right)}
\newcommand\ch[1]{\mathrm{char}(#1)}
\newcommand\sch[1]{\str{\mathrm{char}}(#1)}
\theoremstyle{definition}
\newtheorem{defi}{Definition}[section]
\newtheorem{bspe}[defi]{Examples}
\newtheorem{satzdefi}[defi]{Proposition/ Definition}
\newtheorem{lemma}[defi]{Lemma}
\newtheorem{bem}[defi]{Remark}
\newtheorem{satz}[defi]{Proposition}
\newtheorem{thm}[defi]{Theorem}
\newtheorem{cor}[defi]{Corollary}
\newtheorem{conj}[defi]{Conjecture}
\title{\'Etale and motivic cohomology and ultraproducts of schemes}
\author{Lars Br\"unjes, Christian Serp\'e}
\date{\today}
\email{lbrunjes@gmx.de}
\address{
  Christian Serp\'e \\
  Westf\"alische Wilhelms-Universit\"at M\"unster \\
  Mathematisches Institut \\
  Sonderforschungsbereich 478 ``Geometrische Strukturen in der Mathematik'' \\
  Hittorfstr. 27 \\
  D-48149 M\"unster \\
  Germany}
\email{serpe@uni-muenster.de}
\subjclass[2000]{14F20,14F42,03C20}
\date{\today}
\begin{document}

\begin{abstract}
This paper is a continuation of the authors article \cite{enlsch}. We
mainly study the behaviour of \'etale cohomology, algebraic cycles
and motives under ultraproducts respectively enlargements. The main motivation for that is to
find methods to transfer statements about \'etale cohomology and algebraic cycles from
characteristic zero to positive characteristic and vice versa. We give
one application to the independence of $l$ of Betti numbers in \'etale
cohomology and applications to the complexity of algebraic cycles.
\end{abstract}

\maketitle

\tableofcontents

%%%%%%%%%%%%%%%%%%%%%%Introduction%%%%%%%%%%%%%%%%%%%%%%%%%%%%%%
\section{Introduction}
Let $\{R_i\}_{i\in I}$ be a family of commutative rings and consider the
usual product of rings $\prod_{i\in I} R_i$. Most properties of rings
do not behave well under this construction. For example, even if all
rings $R_i$ are fields, the product is full of zero divisors. The
situation changes if we choose an ultrafilter $\fU\subset\cP(I)$ and
consider the ultraproduct
$$\prod_{i\in I,\fU}R_i:=\prod_{i\in I} R_i/\sim,$$
where $\sim$ is defined by
$(r_i)_{i\in I}\sim (r'_i)_{i\in I} :\Leftrightarrow \{i\in I\vert
r_i=r'_i\}\in\fU.$ For example, in this situation $\prod_{i\in
  I,\fU}R_i$ is even a field if all $R_i$ are fields. In the
case $I=\N$ and $R_i=\R$, Robinson used this methods to construct an
enlargement $\str{\R}:=\prod_{\N,\fU} \R$ of $\R$ where one can do calculus
with infinitesimals, leading to the area of mathematics known today as
\emph{Nonstandard Analysis}. In the case
where $I=\P$ is the set of prime numbers and $R_p=\F_p$ are the finite
prime fields, the ultraproduct $\prod_{p\in\P,\fU}\F_p$ is an interesting
field for algebraic geometry. Namely, on the one hand, that field
behaves like a finite field, because it is the ultraproduct of finite
fields. But on the other hand, it is a field of
characteristic zero. In order to use this ambiguity in algebraic
geometry, we started in \cite{enlsch} to investigate how schemes behave under
ultraproducts. We constructed and explored the properties of a
functor $N$ which turned a scheme over an ultraproduct of rings
into an ultraproduct of schemes. We described the image of this functor
and constructed a similar functor for coherent sheaves on schemes.
Then we used the ambiguity mentioned above to
give two applications to resolution of singularities and weak
factorisation. For more motivational remarks we refer to the
introduction of \cite{enlsch}.

In the present article we proceed with the investigation we started in
\cite{enlsch}. We study the behavior of \'etale cohomology and algebraic cycles
under ultraproducts. We are mainly interested in a connection between the
\'etale cohomology respectively cycle groups of an (in some sense limited) ultraproduct
of schemes and the ultraproduct of the \'etale cohomology and various cycle groups.
In short, we want to know whether \'etale cohomology and various cycle constructions commute
with ultraproducts. Whereas in \'etale cohomology the results are quite
convincing (cf. e.g. Proposition \ref{satz:etalecentral}), the situation for the
cycle groups is, not surprisingly, much more complicated.

In a forthcoming paper \cite{bscomplc}, we apply our methods to the
question whether a class in the $l$-adic cohomology of a smooth
projective variety over $\Q$, which is algebraic over almost all
finite fields, is also algebraic over $\Q$. We show that this can be
expressed as a question about the uniform complexity of the cycles
representing that class over the finite fields.

\vspace{\abstand}

In the whole article, we do not work with ultraproducts,
but with the enlargement of superstructures as in \cite{enlsch}.
Of course it would be possible to work throughout directly with 
ultraproducts. But from our point of view enlargements provide a 
conceptual way of handling ultraproduct constructions. As the
structures like schemes, Chow groups and \'etale cohomology are quite 
involved we chose this more advanced viewpoint for all our considerations.
For the use of enlargements in category theory, we refer to \cite{enlcat}.

\vspace{\abstand}
Now we describe the content of the paper in a little more detail.

In \cite{enlsch}, we constructed a functor $N$ which assigned
to a scheme $X$ of finite type over an internal ring $R$
a *scheme $N(X)$ over $R$.
In that situation, in the first section, we construct
a functor $N$ from the category of constructible \'etale sheaves on $X$ to the
category of *constructible *\'etale *sheaves on $N(X)$. Then we study the relationship
between the \'etale cohomology of a sheaf $\cF$ and the \'etale
cohomology of $\str{\cF}$.

The independence of $l$ of the Betti numbers of $l$-adic
cohomology is known in characteristic zero, but in general not in
positive characteristic. In the second section, we use the results
of the first section to give an application to the independence of $l$ of Betti numbers
in $l$-adic cohomology. We show that in some sense the Betti numbers of  the $l$-adic 
cohomology is independent of $l$ if the characteristic is large enough. How large depends
on the complexity of the scheme and in some sense on $l$. For a precise statement 
see Theorem \ref{indepofl}.

In the third section, we first construct a functor $N$ for the
triangulated category of Voevodsky motives and then use this to give an
appropriate map $N$ for motivic cohomology. We show that the maps
defined in the first and third section are compatible with each other.

In section four we introduce a notion of complexity of algebraic
cycles and use this to describe the image of the functor $N$ for
cycles. For cycles of codimension one we show that $N$ is bijective
for cycles with finite Hilbert polynomial.
Using the result of Mumford, that Chow groups are not finite dimensional,
we further show that in general $N$ is not injective. We also show how
rational equivalence and the intersection product behave under our
notion of complexity.

In the appendix, we give some lemmas about enlargements in commutative
algebra, which we need in section three.

%%%%%%%%%%%%%%%%%%%%%Enlargement of Motives %%%%%%%%%%%%%%%%%%%%%
\section{\'Etale cohomology}
\vspace{\abstand}

In \cite{enlsch}, we explained how a scheme over an ultraproduct of rings
gives rise to an ultraproduct of schemes. Now we want to show that an \'etale sheaf on a
scheme over an ultraproduct gives an ultraproduct of \'etale sheaves
on the ultraproduct of schemes. This is, as in the case of schemes,
only possible for \'etale sheaves which are compact in some sense. For
that we generalize the construction of $N$ for schemes of finite
presentation to algebraic spaces of finite presentation.
After having achieved this, we explore how basic concepts of \'etale
cohomology behave under this construction.

\vspace{\abstand}
We consider the same basic setup as in \cite{enlsch}.
So let $\cR$ be a small subcategory of all commutative rings with:
\begin{itemize}
\item $\Z\in\cR$,
\item for all $A\in\cR$ and all $A$-algebras $B$ of finite
  presentation, we have $B\in\cR$ (up to isomorphism),
\item for all $A\in\cR$ and all prime ideals $\cp \in \spec{A}$,
  the localisation $A_{\cp}$ is in $\cR$ (up to isomorphism).
\end{itemize}

\vspace{\abstand}
As in \cite{enlsch}, let $\Schfp{\cR}$ denote the fibred (over $\cR$) category
of schemes of finite presentation, and let $\AlgSpcfp{\cR}$ denote the
fibred category of algebraic spaces of finite presentation (also fibred over $\cR$).
For general facts about algebraic spaces we refer to \cite{KnAlgSpc}.
We choose a superstructure $\hat{M}$ such that all our small categories are
$\hat{M}$-small,
and let $*:\hat{M}\rightarrow\widehat{\str{M}}$ be an enlargement.
As in the case of schemes, base change along $A\rightarrow \str{A}$ for
$A\in\cR$ defines a functor
$$T: \AlgSpcfp{\cR}\rightarrow \AlgSpcfp{\str{\cR}},$$
such that the diagram
  \[
    \xymatrix{
      {\AlgSpcfp{\cR}} \ar[r]^{\T{}} \ar[d] & {\AlgSpcfp{\scR}} \ar[d] \\
      {\op{\cR}} \ar[r]_{*} & {\op{\scR}} \\
    }
  \]
is commutative.

\vspace{\abstand}

We are looking for a functor
$N:\AlgSpcfp{\str{\cR}}\rightarrow\str{\AlgSpcfp{\str{\cR}}}$ such that
the diagram
\begin{equation}\label{diagram:essentialN}
    \xymatrix{
      {\AlgSpcfp{\cR}} \ar[r]_{\T{}} \ar@(ur,ul)[rr]^{*} \ar[d] &
        {\AlgSpcfp{\scR}} \ar@{.>}[r]_{\s{}} \ar[d] &
        {\str{\AlgSpcfp{\scR}}} \ar[d] \\
      {\op{\cR}} \ar[r]_{*} & {\op{\scR}} \ar@{=}[r] & {\op{\scR}} \\
    }
  \end{equation}
is commutative.

\vspace{\abstand}

\begin{satzdefi}\label{satzconstrN}

There is an essentially unique functor
$$N:\AlgSpcfp{\str{\cR}}\rightarrow\str{\AlgSpcfp{\str{\cR}}}$$
of fibrations over $\str{\cR}$ such that diagram
(\ref{diagram:essentialN}) commutes. 
\end{satzdefi}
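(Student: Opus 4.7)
The strategy is to reduce the construction for algebraic spaces to the construction for schemes already established in \cite{enlsch}. Every algebraic space $X$ of finite presentation over an internal ring $\str{A} \in \str{\cR}$ admits an étale presentation $R \rightrightarrows U$ by schemes of finite presentation, where $R \hookrightarrow U \times_{\str{A}} U$ is an étale equivalence relation. Since $N$ on schemes preserves étaleness, fibre products and finite presentation, applying $N$ termwise yields *schemes $N(U)$, $N(R)$ and a *étale *equivalence relation $N(R) \rightrightarrows N(U)$ over $\str{A}$. By transfer, the internal description of algebraic spaces of finite presentation as quotients of étale equivalence relations of schemes of finite presentation yields a *algebraic space $N(X) := N(U)/N(R) \in \str{\AlgSpcfp{\str{\cR}}}$.

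The first step is to make the target category and the relevant quotients available: by transfer, $\str{\AlgSpcfp{\str{\cR}}}$ has all *étale quotients needed and the usual descent data for algebraic spaces carry over. The second step is independence of the presentation: given two presentations $R_i \rightrightarrows U_i$ for $i = 1,2$ of the same $X$, we pass to a common refinement $U_{12} \to U_1 \times_X U_2$, apply $N$ at the scheme level (using the scheme case from \cite{enlsch}), and invoke the universal property of the *étale quotient to obtain a canonical isomorphism $N(U_1)/N(R_1) \cong N(U_2)/N(R_2)$. The third step is functoriality: for a morphism $f : X \to Y$, one refines a chosen presentation of $X$ so that $f$ lifts to a scheme-level morphism of presentations, applies the scheme functor $N$, and descends the resulting *morphism to $N(X) \to N(Y)$; one then checks that different choices of lift induce the same descended morphism.

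Commutativity of the diagram on $\AlgSpcfp{\cR}$ reduces to the scheme case, since both $T$ and $*$ commute with étale quotients — $T$ by compatibility of base change with the étale site, $*$ by transfer — and the scheme-level identity $N \circ T = *$ is part of the main result of \cite{enlsch}. Essential uniqueness follows because any candidate $N'$ satisfying the diagram must agree with the scheme-level $N$ on the restriction to $\Schfp{\str{\cR}} \subset \AlgSpcfp{\str{\cR}}$ (by the essential uniqueness proved in \cite{enlsch}), and hence on all étale quotients of such schemes by the universal property preserved under $*$. The identification with the fibration structure over $\str{\cR}$ is automatic from the base-change compatibilities.

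The main obstacle is the bookkeeping in the functoriality step: morphisms of algebraic spaces do not lift canonically to morphisms of fixed presentations, so one has to systematically work with refinements and verify that the resulting *morphisms between quotients are independent of all choices. This is the standard descent-theoretic argument, but it must be carried out in the enlarged category, which requires verifying that the transferred universal properties of *étale quotients really do match the refinements coming from the scheme-level $N$; once one has checked that refinements of schemes are sent by $N$ to *refinements (which again reduces to the scheme case and transfer), the rest of the verification is routine.
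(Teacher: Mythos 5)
Your construction half is a legitimate alternative route: where the paper descends an algebraic space $X$ of finite presentation over an internal ring $A$ directly to a model $X_0$ over a subring $A_0\subset A$ of finite type over $\Z$ (this is Lemma \ref{fundlemmaN}, quoting the limit results of Laumon--Moret-Bailly and EGA IV, \S 8) and then sets $N(X):=\str{X_0}\otimes_{\str{A_0}}A$, you instead reduce to the scheme case of \cite{enlsch} by choosing an \'etale presentation $R\rightrightarrows U$, applying the scheme-level $N$, and taking the internal quotient obtained by transfer. Provided you actually verify that the scheme-level $N$ preserves \'etale morphisms, surjectivity and fibre products (true, and provable from the construction in \cite{enlsch}), and carry out the refinement bookkeeping you describe for independence of the presentation and functoriality, this yields the functor $N$ and the commutativity of the triangle. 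The paper's spreading-out argument buys exactly the avoidance of this descent bookkeeping, and, more importantly, it is what powers the uniqueness statement.

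That is where your proposal has a genuine gap: the essential uniqueness argument does not work as stated. A competing cartesian functor $N'$ making diagram (\ref{diagram:essentialN}) commute is only constrained on objects of the form $T(X_0)$ (and on their pullbacks along internal ring maps, via the fibration structure); nothing in the hypotheses forces $N'$ to send an atlas $U\to X$ to an atlas, nor to preserve \'etale quotients, so knowing $N'$ on $\Schfp{\str{\cR}}$ does not determine $N'$ on algebraic spaces "by the universal property of the quotient" --- that universal property is a property of your $N$, not of an arbitrary $N'$. (A smaller point: a priori $N'$ restricted to schemes lands in $\str{\AlgSpcfp{\str{\cR}}}$, so the uniqueness statement of \cite{enlsch} does not literally apply to it.) The uniqueness really comes from Lemma \ref{fundlemmaN}(ii): every $X\in\AlgSpcfp{\str{\cR}}$ over $A$ is isomorphic to the pullback of $T(X_0)$ along the canonical internal homomorphism $\str{A_0}\to A$ for some model $X_0$ over a finite-type-over-$\Z$ subring $A_0$, and cartesianness of $N'$ plus $N'\circ T\cong *$ then forces $N'(X)\cong\str{X_0}\otimes_{\str{A_0}}A$, with Lemma \ref{fundlemmaN}(i) handling independence of the model and morphisms. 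So even on your route you cannot dispense with the spreading-out lemma for algebraic spaces if you want the "essentially unique" half of the proposition; you should either invoke it for the uniqueness step or prove a statement of that type.
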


\vspace{\abstand}

\begin{bem}
The uniqueness of $N$ from above can be made precise as follows:
$N:\AlgSpcfp{\str{\cR}}\rightarrow\str{\AlgSpcfp{\str{\cR}}}.$
is the right  Kan extension of $*:\AlgSpcfp{\cR}\rightarrow
\str{\AlgSpcfp{\str{\cR}}}$ along
$T:\AlgSpcfp{\str\cR}\rightarrow\str{\AlgSpcfp{\str{\cR}}}$ in the
2-category of fibrations.
\end{bem}

\vspace{\abstand}

The proof of the proposition relies mainly on the following lemma.
\begin{lemma}\label{fundlemmaN}
Let $A=colim_{\lambda\in L}$ be the colimit of a filtered system of
rings $(A_{\lambda})_{\lambda\in L}$.
\begin{enumerate}
\item Let $\lambda_0\in L$, and let $X_{\lambda_0}$ and $Y_{\lambda_0}$ be
  algebraic spaces over $A_{\lambda_0}$. We assume that
  $X_{\lambda_0}$ is quasi compact and that $Y_{\lambda_0}$ is locally
  of finite presentation over $A_{\lambda_0}$. Then the canonical map\\
  \begin{minipage}[c]{15cm}
  \begin{multline*}
  colim_{\lambda>\lambda_0}
  \Hom{A_{\lambda_0}}{X_{\lambda_0}\otimes_{A_{\lambda_0}}A_{\lambda}}
  {Y_{\lambda_0}\otimes_{A_{\lambda_0}}A_{\lambda}} \longrightarrow \\
  \Hom{A_{\lambda_0}}{X_{\lambda_0}\otimes_{A_{\lambda_0}}A_}
  {Y_{\lambda_0}\otimes_{A_{\lambda_0}}A}
  \end{multline*}
  \end{minipage}
  is a bijection.
\item  Let $X$ be an algebraic space of finite presentation over
  $A$. Then there is a $\lambda_0\in L$, an algebraic space $X_0$ of
  finite presentation over $A_{\lambda_0}$ and an isomorphism
  $$ X_0\otimes_{A_{\lambda_0}} A\rightarrow X.$$
\end{enumerate}
\end{lemma}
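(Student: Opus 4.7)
The plan is to reduce everything to the analogous (well-known) statements for schemes, applying them to an \'etale presentation of the algebraic space and then gluing.

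For part (i), I would pick an \'etale presentation of $X_{\lambda_0}$: an \'etale surjection $U_{\lambda_0}\to X_{\lambda_0}$ from a scheme $U_{\lambda_0}$, where by quasi-compactness I may take $U_{\lambda_0}$ to be a disjoint union of finitely many affines of finite presentation over $A_{\lambda_0}$, and let $R_{\lambda_0}:=U_{\lambda_0}\times_{X_{\lambda_0}}U_{\lambda_0}$, again a quasi-compact scheme over $A_{\lambda_0}$. A morphism $X_{\lambda_0}\otimes A'\to Y_{\lambda_0}\otimes A'$ (for $A'$ any $A_{\lambda_0}$-algebra) corresponds to a morphism $U_{\lambda_0}\otimes A'\to Y_{\lambda_0}\otimes A'$ equalising the two projections from $R_{\lambda_0}\otimes A'$. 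The classical statement for schemes (EGA IV.8.8, applied pointwise to the target $Y_{\lambda_0}$ which is locally of finite presentation, combined with the fact that $U_{\lambda_0}$ and $R_{\lambda_0}$ are quasi-compact) then gives exactly the claimed bijection on Hom-sets, since taking filtered colimits commutes with finite limits (the equaliser condition).

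For part (ii), I would again take an \'etale presentation $U\to X$ of finite presentation from a scheme with $R:=U\times_X U$ of finite presentation, and let $s,t:R\rightrightarrows U$ be the source and target maps. The scheme version of the descent result (EGA IV.8.8 / IV.8.10) produces, after increasing $\lambda_0$ finitely many times, schemes $U_0,R_0$ of finite presentation over $A_{\lambda_0}$, \'etale morphisms $s_0,t_0:R_0\rightrightarrows U_0$ descending $s,t$ (\'etaleness descends by EGA IV.17), a composition $R_0\times_{U_0}R_0\to R_0$, an inverse and a diagonal, all satisfying the groupoid/\'etale-equivalence-relation axioms (each axiom is an equality of morphisms between finitely presented schemes, hence descends by part (i) applied to schemes). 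Setting $X_0$ to be the algebraic-space quotient of this \'etale equivalence relation (Knutson, \cite{KnAlgSpc}), one then checks that $X_0\otimes_{A_{\lambda_0}} A\cong X$ because both are quotients of the same pulled-back equivalence relation.

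The main obstacle is purely bookkeeping: descending not just the schemes $U$ and $R$ but the entire groupoid data and verifying that the resulting quotient is an algebraic space of finite presentation. Nothing here is really new; it is the standard passage from the scheme case in EGA IV.8 to algebraic spaces, and is essentially in Knutson's book. The hypotheses (quasi-compactness of $X_{\lambda_0}$ in (i), finite presentation in (ii)) are precisely what one needs to extract a \emph{finite} amount of data to descend, so that only finitely many indices $\lambda$ have to be passed before everything is defined and every required identity holds.
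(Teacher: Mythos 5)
Your overall strategy (reduce to the scheme case via an \'etale presentation and descend the groupoid data) is the standard one and is genuinely different from what the paper does: the paper simply cites Laumon--Moret-Bailly, Prop.~4.18, where the analogous limit statement is proved for algebraic stacks, to get (i), and quotes EGA IV, \S 8 for (ii). Your treatment of (ii) is essentially a fleshed-out version of what that citation hides, and it is fine modulo routine bookkeeping (descending $s_0,t_0$, \'etaleness via EGA IV~17.7.8, the monomorphism condition $R_0\to U_0\times U_0$, and the compatibility of quotient formation with base change, which holds because $U\to X$ \'etale surjective and $R=U\times_X U$ are preserved by any base change).

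In (i), however, there is a concrete gap. After writing $\mathrm{Hom}_{A'}(X_{\lambda_0}\otimes A',\,Y_{\lambda_0}\otimes A')$ as the equalizer of the two restriction maps to $\mathrm{Hom}(U_{\lambda_0}\otimes A',\,Y_{\lambda_0}\otimes A')$ and $\mathrm{Hom}(R_{\lambda_0}\otimes A',\,Y_{\lambda_0}\otimes A')$ (which is correct, since $X$ is the quotient sheaf and $Y$ is an \'etale sheaf, and filtered colimits commute with finite limits), the \emph{target} is still an algebraic space, so EGA IV~8.8/8.14 --- a statement about morphisms of schemes --- does not apply, and ``applied pointwise to the target'' has no meaning here. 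What you still need is the limit statement for morphisms from a quasi-compact (and quasi-separated) scheme to an algebraic space locally of finite presentation; proving it requires choosing an \'etale presentation $V_{\lambda_0}\to Y_{\lambda_0}$ as well, lifting a morphism $U\otimes A\to Y\otimes A$ \'etale-locally on the source to $V\otimes A$, and descending both the \'etale cover of $U\otimes A$ and the lift (plus a separate argument for injectivity, e.g.\ via the quasi-compact diagonal of $Y$). That extra layer of descent is exactly the content the paper outsources to the stacks reference, so as written your reduction is circular at this point rather than complete. Two minor points in the same vein: quasi-compactness of $R_{\lambda_0}=U_{\lambda_0}\times_{X_{\lambda_0}}U_{\lambda_0}$ uses quasi-separatedness of $X_{\lambda_0}$ (automatic under Knutson's conventions, but worth saying), and the bijectivity (not just injectivity) of the EGA-type limit map needs the source quasi-compact \emph{and} quasi-separated.
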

\begin{proof}[Proof of Lemma \ref{fundlemmaN}]
In \cite{MR1771927}[Proposition 4.18], the analogous statements for algebraic stacks can be found,
and from this, (i) follows immediately.
The second part follows from the analogous statements
for schemes, which can be found in \cite{ega43}[§8].
\end{proof}

\vspace{\abstand}

\begin{proof}[Proof of Proposition \ref{satzconstrN}]
We give an explicit construction of $N$ and leave the rest to the
reader. For an $A\in\str{\cR}$ and $X\in\str{\AlgSpcfp{\str{\cR}}}$ we
can choose by lemma \ref{fundlemmaN}(ii) a subring $A_0\subset A$ of finite type over $\Z$, a
$X_0\in\ALGSPCFP/A_0$, and an isomorphism $\varphi: X_0\otimes_{A_0}
A\rightarrow X$. Because $A_0$ is of finite type over $\Z$, we have a
canonical internal ring homomorphism $\str{A}_0\rightarrow A$ (cf.
\cite{enlsch}[Proposition/Definition 3.2]). With that we define
$$N(X):= \str{X_0}\otimes_{\str{A_0}}A$$
For a different choice $(A_0', X_0',\varphi')$ we have the isomorphism
$$X_0\otimes_{A_0}A\xrightarrow{\varphi'^{-1}\circ \varphi}
X_0'\otimes_{A_0}A.$$
By \ref{fundlemmaN} (i) there is a $B\subset A$ of finite type over
$\Z$ with $A_0,A_0'\subset B$ and an isomorphism
$$\psi_0:X_0\otimes_{A_0}B\xrightarrow{\sim}X_0'\otimes_{A_0'}B$$
such that
\[
\xymatrix{X_0\otimes_{A_0}A\ar[r]^{\varphi'^{-1}\circ \varphi}
  \ar[d]^{\sim} &  X_0'\otimes_{A_0}A \ar[d]^{\sim} \\
  X_0\otimes_{A_0}B_0\otimes_{B_0}A \ar[r]^{\psi_0\otimes_{B_0}A} &
  X_0'\otimes_{A_0'}B_0\otimes_{B_0}A
}\]
So this $\psi_0$ defines an isomorphism
$$\str{X_0}\otimes_{\str{A_0}}A\xrightarrow{\sim}\str{X_0'}\otimes_{\str{A_0}}A.$$
Again by \ref{fundlemmaN} (i) it can be shown that this isomorphism is
independent of the choices of $B_0$ and $\psi$. By similar argument as
in \cite{enlsch}[Theorem 3.4] for schemes,
one shows that $N$ is functorial.
\end{proof}

\vspace{\abstand}

\begin{bem}
In the construction it is important that $A_0$ is not only in $\cR$
but even of finite type over $\Z$. Otherwise there would be no canonical
morphism $\str{A}_0\rightarrow A$.
\end{bem}

\vspace{\abstand}

For a scheme $X$ we consider the \'etale topology on the category
$\SCHFP/X$ and denote the resulting site by $(\SCHFP/X)_\et$. We denote by
$\Shvet(X):=\Shv((\SCHFP/X)_\et)$ the category of sheaves on
$(\SCHFP/X)_\et$.
For a *scheme $X$ we use the
notation $\str{\Shvet(X)}$  for the internal category of *sheaves, and
for $B\in\cR$ and $X\in\SCHFP/B$ there is a canonical functor
$$*:\Shvet(X)\rightarrow \str{\Shv}_\et(\str{X}).$$ For more details
about this we refer to our paper \cite{nsetale}.

\vspace{\abstand}

\begin{bem}
For a quasi compact $X$, e.g. if $X$ is of finite presentation over an
affine scheme, the restriction functor
$$\Shv((\SCH/X)_\et)\rightarrow \Shv((\SCHFP/X)_\et)$$
is an equivalence of categories. So in particular the cohomology on
$X$ in the site $(\SCHFP/X)_\et$ is the same as the usual \'etale
cohomology.
\end{bem}

\vspace{\abstand}

Now let $A\in\str{\cR}$ and $X\in\SCHFP/A$,
and consider the fully faithful Yoneda embedding
$$\ALGSPCFP/X \rightarrow \Shvet(X).$$
We denote by $\Shvet^{fp}(X)$ the  essential image of the above
functor. To define $N$  on $\Shvet^{fp}(X)$, we choose for each
$\cF\in\Shvet^{fp}(X)$ a $Y\in\ALGSPCFP/X$ and an isomorphism
$h_Y\cong\cF$ and define
$$ N(\cF):= \str{h}_{N(Y)},$$
where $\str{h}$ denotes the *Yoneda embedding
$$\str{h}:\str{\ALGSPCFP}/N(X)\rightarrow \str{\Shv}_\et(N(X)).$$
This defines a functor
$$N:\Shvet^{fp}(X)\rightarrow \str{\Shvet(N(X))}.$$

\vspace{\abstand}

\begin{bspe}
We are mainly interested in the following two cases:
\begin{enumerate}
\item Let $\cF:=\G_{m,X}$. Then we have $\cF\in\Shvet^{fp}(X)$, and
  there is an isomorphism $N(\cF)\cong\str{\G}_{m,N(X)}$.
\item Let $\cF:= \mu_n$. Then we have $\cF\in\Shvet^{fp}(X)$, and there is an
  isomorphism $N(\mu_{n,X})\cong \str{\mu}_{n,N(X)}$.
\end{enumerate}
\end{bspe}

\vspace{\abstand}

If $\{U_i\rightarrow X\}$
is a finite \'etale covering of $X$, then $\{N(U_i)\rightarrow N(X)\}$
is an internal *\'etale covering of $N(X)$. We denote by
$$S:\str{\Shv}_\et(N(X))\rightarrow \Shvet(X)$$
the induced functor.

\vspace{\abstand}

The morphisms
$$\Gamma(U,\cF)=\Hom{\ALGSPCFP/X}{U}{Y}\xrightarrow{N}
\Hom{\str{\ALGSPCFP}/N(X)}{N(U)}{N(Y)}$$
define a natural transformation
\begin{equation}\label{nattrans}
\varphi:h\rightarrow S\circ \str{h}\circ N
\end{equation}
from the Yoneda embedding
$$h:\ALGSPCFP/X\rightarrow \Shvet(X)$$
to
$$S\circ \str{h}\circ N:\ALGSPCFP/X\rightarrow \Shvet(X).$$

\vspace{\abstand}

For a *\'etale *sheaf $\cG\in\str{\Shv}_\et(N(X))$, this gives a map
\begin{multline}\label{mapSN}
\Hom{\str{\Shv}_\et(N(X))}{N(h_Y)}{\cG}
=\Hom{\str{\Shv}_\et(N(X))}{[\str{h}N](Y)}{\cG} \\
\xrightarrow{S}\Hom{\Shvet(X)}{[S\str{h}N](Y)}{S(\cG)}
\xrightarrow{\varphi_Y^*}
\Hom{\Shvet(X)}{h_Y}{S(\cG)}.
\end{multline}

\vspace{\abstand}

\begin{satz}\label{pseudoadjunction}
For all $Y\in\ALGSPCFP/X$ and all $\cG\in\str{\Shv}_\et(N(X))$,
map (\ref{mapSN}) is a bijection.
\end{satz}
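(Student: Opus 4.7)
The plan is to identify both sides of the map (\ref{mapSN}) with $\cG(N(Y))$ via Yoneda-type arguments, and then check that the map itself corresponds to the identity on this common set. Concretely, the approach has three steps.

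First, I would handle the left-hand side by transferring the Yoneda lemma. Since $\str{h}$ is the *Yoneda embedding, transfer of the statement ``for any sheaf $\cG$ on $X$ and representable $h_Y$, $\Hom{}{h_Y}{\cG} \cong \cG(Y)$ via $f\mapsto f_Y(\id{Y})$'' gives, applied internally to the *étale site of $N(X)$, a bijection
$$\Hom{\str{\Shv}_\et(N(X))}{\str{h}_{N(Y)}}{\cG} \;\cong\; \cG(N(Y)),\qquad f\mapsto f_{N(Y)}\bigl(\id{N(Y)}\bigr).$$
Since $N(h_Y)=\str{h}_{N(Y)}$ by construction, this identifies the source of (\ref{mapSN}).

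Second, I would handle the right-hand side. The functor $S$ is, by construction, induced by the fact that $N$ carries finite étale coverings of $X$ to internal *étale coverings of $N(X)$; thus for $U\in\ALGSPCFP/X$ one has $S(\cG)(U)=\cG(N(U))$. Ordinary Yoneda then gives
$$\Hom{\Shvet(X)}{h_Y}{S(\cG)} \;\cong\; S(\cG)(Y) \;=\; \cG(N(Y)),\qquad g\mapsto g_Y(\id{Y}).$$
In particular $S(N(h_Y))(U)=\Hom{N(X)}{N(U)}{N(Y)}$, and the natural transformation $\varphi_Y$ from (\ref{nattrans}) is, at the component of $U$, the map $g\mapsto N(g)$; in particular $(\varphi_Y)_Y(\id{Y})=\id{N(Y)}$.

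Third, I would chase an element $f$ through the composition. Under $S$, $f$ becomes the externalisation of its own components. Applying $\varphi_Y^*$ and evaluating at $U=Y$ on the generator $\id{Y}\in h_Y(Y)$ yields
$$\bigl(S(f)\circ\varphi_Y\bigr)_Y(\id{Y}) \;=\; S(f)_Y\bigl(\id{N(Y)}\bigr) \;=\; f_{N(Y)}\bigl(\id{N(Y)}\bigr),$$
which is precisely the image of $f$ under the Yoneda identification of Step~1. Under the two Yoneda identifications, (\ref{mapSN}) therefore becomes the identity map on $\cG(N(Y))$, so it is a bijection.

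The only point that needs care is the first step: one must be sure that the external set $\Hom{\str{\Shv}_\et(N(X))}{\str{h}_{N(Y)}}{\cG}$ between internal objects of an internal category agrees with the internal hom set to which the transferred Yoneda lemma directly applies. This is the standard principle that morphisms between internal objects living in the superstructure are themselves internal; everything else in the proof is a routine diagram chase.
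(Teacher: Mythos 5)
Your argument is correct and essentially the same as the paper's: both identify the source with $\Gamma(N(Y),\cG)$ via the transferred *Yoneda lemma, the target with $\Gamma(Y,S(\cG))=\Gamma(N(Y),\cG)$ via ordinary Yoneda and the definition of $S$, and then observe that map (\ref{mapSN}) becomes the identity under these identifications. The only thing the paper makes explicit that you use implicitly (when evaluating $S(\cG)$ on objects $U\in\ALGSPCFP/X$) is that the restriction from the \'etale site of algebraic spaces of finite presentation to that of schemes of finite presentation induces an equivalence of sheaf categories, externally and, by transfer, internally.
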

\begin{proof}
The restriction functors
$$ \Shv((\ALGSPCFP/X)_\et) \rightarrow \Shv((\SCHFP/X)_\et),$$
$$ \str{\Shv}((\str{\ALGSPCFP}/X)_\et) \rightarrow \str{\Shv}((\str{\SCHFP}/N(X)_\et))$$
are isomorphisms. So it is enough to see the bijection for algebraic spaces.
But there we have the commutative diagram
$$
\xymatrix{
\Hom{\str{\Shv}_\et(\str{\ALGSPCFP}/X)}{N(h_Y)}{\cG} \ar[rr]^-{\sim}_-{\text{*Yoneda}} \ar[d] & &
\Gamma(N(Y),\cG) \ar@{=}[d]\\
\Hom{\Shvet(\ALGSPCFP/X)}{h_Y}{S(\cG)} \ar[r]^-{\sim}_-{\text{Yoneda}} & \Gamma(Y,S(\cG)) \ar@{=}[r] & \Gamma(N(Y),\cG).
}
$$
\end{proof}

\vspace{\abstand}

Next we want to study the behaviour of stalks under the functor
N. For that let again $A\in\str{\cR}, X\in\SCHFP/A$, and let $\cF\in \Shvet^{fp}(X)$ be
an \'etale sheaf. If $K$ is a *artinian A-*algebra, there is by
\cite{enlsch}[Theorem 4.13] a canonical bijection
$$\Hom{\str{\SCH/A}}{\str{\spec{K}}}{N(X)}\rightarrow \Hom{\SCH/A}{\spec{K}}{X}.$$

\vspace{\abstand}

Let now $K\in\str{\cR}$ be a separably closed field and
$$\bar{x}:\spec{K}\rightarrow X$$
a geometric point of $X$.
By abuse of notation, we denote by
$$N(\bar{x}):\str{\spec{K}}\rightarrow N(X)$$
the corresponding *geometric point of $N(X)$. The stalk of $\cF$
at $\bar{x}$ is by definition
$$\cF_{\bar{x}}=\mathrm{colim}_{U}\Gamma(U,\cF)$$
where $U$ runs through the inductive system of \'etale neighbourhoods
of $\bar{x}$. If
$$\xymatrix{U \ar[r]  & X \\
      \spec{K} \ar[u] \ar[ur]_{\bar{x}}}$$
is an \'etale neighbourhood of $\bar{x}$, then
$$\xymatrix{N(U) \ar[r]  & N(X) \\
      \str{\spec{K}} \ar[u] \ar[ur]_{N(\bar{x})}}$$
is a *\'etale neighbourhood of $N(\bar{x})$, and we have the canonical
homomorphisms
$$\Gamma(U,\cF)\rightarrow \Gamma(N(U),N(\cF)).$$
These define a canonical homomorphism
\begin{equation}\label{maponstalks}
\cF_{\bar{x}}\rightarrow N(\cF)_{N(\bar{x})}.
\end{equation}
In general, this is not an isomorphism, but it is one for constructible
sheaves. For that we recall:

\vspace{\abstand}

\begin{defi}
An \'etale sheaf on a scheme $X$ is called constructible if it is representable
by an algebraic space which is finite and \'etale over $X$.
\end{defi}

\vspace{\abstand}

\begin{satz}\label{satz:stalkiso}
In the above situation, if we assume that $\cF$ is a constructible
sheaf, then the canonical morphism
$$\cF_{\bar{x}}\xrightarrow{\sim} N(\cF)_{N(\bar{x})}$$
is an isomorphism.
\end{satz}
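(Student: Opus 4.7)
The plan is to exploit the definition of constructibility at hand: $\cF=h_Y$ for some $Y\in\ALGSPCFP/X$ finite and \'etale over $X$, with $N(\cF)=\str{h}_{N(Y)}$ by construction of $N$. Under this hypothesis both stalks reduce to external sets of $X$-points of $Y$, and the bijectivity of (\ref{maponstalks}) will follow from \cite{enlsch}[Theorem 4.13] applied to the separably closed field $K$ (which is a *artinian *algebra over itself).

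First I would identify $\cF_{\bar{x}}$ with $\Hom{X}{\spec{K}}{Y}$: since $Y\to X$ is finite \'etale and $K$ is separably closed, $Y\times_X\spec{K}$ decomposes as $\coprod_{i=1}^n \spec{K}$ for some standard $n\in\N$, and by the local structure of finite \'etale morphisms every $X$-morphism $s\colon\spec{K}\to Y$ extends to an $X$-morphism $U\to Y$ on a sufficiently small \'etale neighbourhood $U$ of $\bar{x}$. Hence the filtered colimit defining $\cF_{\bar{x}}$ collapses to $\Hom{X}{\spec{K}}{Y}$, a set of cardinality $n$.

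By transfer, the same argument applied to the *finite *\'etale morphism $N(Y)\to N(X)$ and the *geometric point $N(\bar{x})$ identifies $N(\cF)_{N(\bar{x})}$ with $\Hom{N(X)}{\str{\spec{K}}}{N(Y)}$ as an external set of cardinality $n$ (the standardness of $n$ ensures that the internal and external $n$-fold coproducts coincide). Now \cite{enlsch}[Theorem 4.13] supplies natural bijections
\[
\Hom{\str{\SCH/A}}{\str{\spec{K}}}{N(Z)} \xrightarrow{\sim} \Hom{\SCH/A}{\spec{K}}{Z}
\]
for $Z=Y$ and $Z=X$; by naturality these commute with post-composition along $Y\to X$ and $N(Y)\to N(X)$, so they restrict to a bijection $\Hom{N(X)}{\str{\spec{K}}}{N(Y)}\cong\Hom{X}{\spec{K}}{Y}$. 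Unravelling (\ref{maponstalks}) -- a germ represented by $s\colon U\to Y$ at $\bar{x}$ is sent to the germ of $N(s)\colon N(U)\to N(Y)$ at $N(\bar{x})$, which under the identifications above corresponds to the image of $s\circ\bar{x}\colon\spec{K}\to Y$ under \cite{enlsch}[Theorem 4.13] -- one sees that (\ref{maponstalks}) is precisely the inverse of this bijection, hence an isomorphism.

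The main obstacle I expect is the identification of the *stalk $N(\cF)_{N(\bar{x})}$ with the external Hom set $\Hom{N(X)}{\str{\spec{K}}}{N(Y)}$. A priori the *stalk is an internal filtered colimit over the internal directed set of *\'etale neighbourhoods of $N(\bar{x})$, which contains nonstandard neighbourhoods not of the form $N(U)$, so the reduction is not purely formal. The essential input is the transfer principle together with the standardness of the covering degree $n$, which guarantees that the internal decomposition of the *fibre $N(Y)\times_{N(X)}\str{\spec{K}}$ as an $n$-fold coproduct of copies of $\str{\spec{K}}$ is externally a coproduct decomposition as well, forcing every *section to be externally one of only $n$ possibilities.
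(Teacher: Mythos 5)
Your argument is correct and essentially the paper's own proof: both identify the stalks $\cF_{\bar{x}}$ and $N(\cF)_{N(\bar{x})}$ with the sets of $K$-valued points of $Y$ resp.\ $N(Y)$ over the geometric point and conclude with the bijection of \cite{enlsch}[Theorem 4.13]. The only differences are cosmetic: the paper first base-changes to the fibre $Y\otimes_X K$, which is a scheme over $K$, so that Theorem 4.13 applies verbatim (whereas your application of it to $Y$ over $A$ tacitly uses that $Y$, being finite over the scheme $X$, is itself a scheme), and it gets the identification of $N(\cF)_{N(\bar{x})}$ directly by transfer of the stalk description, so your auxiliary counting argument with the standard degree $n$ is not needed.
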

\begin{proof}
That $\cF$ is constructible means that it is representable by an
algebraic space ${Y\rightarrow X}$, finite and \'etale over $X$. Then we
have
$$\cF_{\bar{x}}=\Hom{\ALGSPC/X}{\spec{K}}{Y}=\Hom{\ALGSPC/K}{\spec{K}}{Y\otimes_X
  K}$$
Now $Y\otimes_X K$ is a scheme, and we have again by \cite{enlsch}[Theorem 4.13] the bijection
$$\Hom{\SCH/K}{\spec{K}}{Y\otimes_X K}\xrightarrow{\sim}
\Hom{\str{\SCH}/K}{\str{\spec{K}}}{N(Y\otimes_X K)},$$
and $N(Y\otimes_X K)=N(Y)\otimes_{N(X)} K$ as well as the
identification
$$N(\cF)_{N(\bar{x})}=\Hom{\str{\SCH}/k}{\str{\spec{K}}}{N(Y)\otimes_{N(X)} K}.$$
\end{proof}

\vspace{\abstand}

\begin{bem}
We give an example showing that
map (\ref{maponstalks}) is not an
isomorphism in general. For that let $X=\spec{\str{\Q}}$,
$K=\str{\bar{\Q}}$, and let $\bar{x}$ be given by the canonical *embedding
$\str{\Q}\rightarrow \str{\bar{\Q}}$. Then $\G_{a,\bar{x}}$ is the
algebraic closure of $\str{\Q}$ in $\str{\bar{\Q}}$,
whereas $\str{\G}_{a,N(\bar{x})}$ is $\str{\bar{\Q}}$, which is surely different.
\end{bem}

\vspace{\abstand}

Next we want to remark that morphism (\ref{maponstalks}) is compatible
with specialisation morphisms. So let $K,k\in\str{\cR}$ be
separably closed fields,
$$\bar{a}: \spec{K} \rightarrow X \text{ and }
  \bar{s}: \spec{k} \rightarrow X$$
two geometric points, and
$$\varphi: \spec{\cO_{X,\bar{a}}}\rightarrow \spec{\cO_{X,\bar{s}}}$$
be a specialisation morphism, i.e. an $X$--morphism.
Let
$$ \Psi:\str{\spec{\cO_{N(X),N(\bar{s})}}}\rightarrow
\str{\spec{\cO_{N(X),N(\bar{a})}}}$$
be a *specialisation morphism that prolongs $\varphi$, i.e. the
diagram
$$
\xymatrix{
\cO_{X,\bar{s}}\ar[d]\ar[r] & \cO_{N(X),N(\bar{s})}\ar[d] \\
\cO_{X,\bar{a}}\ar[r]       & \cO_{N(X),N(\bar{a})}
}
$$
commutes.

\vspace{\abstand}
Then we have:
\begin{satz}
In the situation described above, let $\cF$ be an \'etale sheaf on $X$
which is representable by an algebraic space of finite presentation
over $X$. Then the induced diagram
$$
\xymatrix{
\cF_{\bar{s}}\ar[d]\ar[r]   & N(\cF)_{N(\bar{s})} \ar[d]  \\
\cF_{\bar{a}}\ar[r]         & N(\cF)_{N(\bar{a})}
}
$$
is commutative.
\end{satz}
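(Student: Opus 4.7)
The plan is to represent a class in $\cF_{\bar{s}}$ by a concrete morphism and trace both compositions in the diagram, reducing the commutativity to the hypothesis on $\Psi$. Since $\cF = h_Y$ with $Y \in \ALGSPCFP/X$, the stalks identify as $\cF_{\bar{x}} = Y(\cO_{X,\bar{x}})$ and $N(\cF)_{N(\bar{x})} = N(Y)(\cO_{N(X), N(\bar{x})})$ via Lemma~\ref{fundlemmaN}(i) applied to the colimit presentations of the strict (*)local rings by étale neighbourhoods.

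First, I represent a class in $\cF_{\bar{s}}$ by a morphism $s_U: U \to Y$ over $X$ with $U \to X$ an étale neighbourhood of $\bar{s}$. Via the $\bar{a}$-lift $\bar{a}_U: \spec{K_a} \to U$ obtained by composing $\spec{K_a}\to\spec{\cO_{X,\bar{a}}} \xrightarrow{\varphi} \spec{\cO_{X,\bar{s}}} \to U$, the same $U$ serves as an étale neighbourhood of $\bar{a}$, and the specialisation $\varphi^*: \cF_{\bar{s}} \to \cF_{\bar{a}}$ sends $[s_U]$ to the class of the same $s_U$ with respect to this $\bar{a}$-lift.

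Second, the horizontal map $\cF_{\bar{x}} \to N(\cF)_{N(\bar{x})}$ sends $[s_U]$ to the composition $N(s_U) \circ \iota_U^{\bar{x}}$, where $\iota_U^{\bar{x}}: \str{\spec{\cO_{N(X), N(\bar{x})}}} \to N(U)$ is the canonical $*N(X)$-morphism coming from $N(U)$ as a *étale neighbourhood of the *geometric point $N(\bar{x})$; the corresponding $N(\bar{x})$-lift $N(\bar{x}_U): \str{\spec{K_x}} \to N(U)$ is defined via Proposition~4.13 of \cite{enlsch} applied to $\bar{x}_U$. Tracing both routes on $[s_U]$ yields
\begin{align*}
\text{right-then-down:} \quad & N(s_U) \circ \iota_U^{\bar{s}} \circ \Psi, \\
\text{down-then-right:} \quad & N(s_U) \circ \iota_U^{\bar{a}},
\end{align*}
both regarded as *morphisms $\str{\spec{\cO_{N(X), N(\bar{a})}}} \to N(Y)$.

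The main step is to show $\iota_U^{\bar{s}} \circ \Psi = \iota_U^{\bar{a}}$ as $*N(X)$-morphisms from $\str{\spec{\cO_{N(X), N(\bar{a})}}}$ to $N(U)$. Because $N(U) \to N(X)$ is *étale and the source is strictly *local, such a *morphism is uniquely determined by the induced *geometric point $\str{\spec{K_a}} \to N(U)$ lifting $N(\bar{a})$. For $\iota_U^{\bar{a}}$ this lift is $N(\bar{a}_U)$ by construction; for $\iota_U^{\bar{s}} \circ \Psi$, the lift is obtained by composing the canonical $\str{\spec{K_a}} \to \str{\spec{\cO_{N(X), N(\bar{a})}}}$ with $\Psi$ and then $\iota_U^{\bar{s}}$. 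The hypothesis that $\Psi$ prolongs $\varphi$ — the commutativity of the stated ring square — lets one rewrite this composite precisely as $N$ applied to the defining composition of $\bar{a}_U$, so both lifts coincide. The main obstacle is the bookkeeping needed to identify the *étale neighbourhood structures on $N(U)$ with respect to $N(\bar{s})$ and $N(\bar{a})$ and to see that the ring-square hypothesis translates one into the other; once this naturality is in place, the commutativity of the diagram is a formal consequence of the functoriality of $N$ and the universal property of strict *local rings.
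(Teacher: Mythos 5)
Your proposal is correct and takes essentially the same route as the paper: the paper's entire proof is the one-line remark that the claim ``follows directly from the explicit construction of the morphisms,'' and your argument is precisely that explicit verification, cleanly organized by identifying stalks with sections over the strict ($*$)localizations and reducing commutativity to the uniqueness of lifts along *\'etale morphisms from strictly *local schemes, with the ring-square hypothesis matching the two geometric-point lifts. (Note that the displayed direction of $\Psi$ in the paper is evidently a typo; you work with the direction forced by the commuting ring square, which is the intended one.)
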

\begin{proof}
This follows directly from the explicit construction of the morphisms.
\end{proof}

\vspace{\abstand}

Now we want to see how cohomology and higher derived direct images
behave under the functor $N$. First we consider the absolute case. For
that let $A\in\scR$ be an internal ring and $X\in\SCHFP/A$.

\vspace{\abstand}

\begin{lemma}\label{Sandflabby}
If $\cI\in\str{\Shvet(N(X))}$ is a *injective *sheaf on $N(X)$, then
$S(\cI)$ is a flabby sheaf on $X$.
\end{lemma}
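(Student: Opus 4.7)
The plan is to verify flabbiness by establishing $\check{H}^q(\fU, S(\cI))=0$ for every finite étale cover $\fU$ in $(\SCHFP/X)_\et$ and every $q>0$. Since every object of $\SCHFP/X$ is quasi-compact, any étale cover admits a finite subcover, and so this finite-cover statement is sufficient to conclude flabbiness of $S(\cI)$.

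Given a finite cover $\fU=\{U_i\to U\}_{i=1}^n$, I would first apply $N$ to obtain a *finite family $N(\fU)=\{N(U_i)\to N(U)\}_{i=1}^n$, which by transfer is a *étale *cover of $N(U)$. Because the construction of $N$ in Proposition~\ref{satzconstrN} is given by *base change of an approximation over a finitely generated subring, $N$ preserves finite fiber products, so
\[
N(U_{i_0}\times_U\cdots\times_U U_{i_p}) \;=\; N(U_{i_0})\times_{N(U)}\cdots\times_{N(U)} N(U_{i_p}).
\]
Combining this with the bijection of Proposition~\ref{pseudoadjunction} (applied to the representable on each iterated fiber product) yields, in each degree $p$, an isomorphism
\[
\prod_{(i_0,\ldots,i_p)} S(\cI)(U_{i_0}\times_U\cdots\times_U U_{i_p})
  \;\cong\; \prod_{(i_0,\ldots,i_p)} \cI\bigl(N(U_{i_0})\times_{N(U)}\cdots\times_{N(U)} N(U_{i_p})\bigr).
\]
Using the naturality of Proposition~\ref{pseudoadjunction} in its representable argument, these degreewise isomorphisms commute with the face maps of the Čech nerve and hence give an isomorphism of cochain complexes
\[
\check{C}^\bullet(\fU, S(\cI)) \;\cong\; \str{\check{C}}^\bullet(N(\fU),\cI)
\]
(with the right-hand side understood through its externalisation, which is legitimate because the indexing set is external finite).

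The last step is to invoke transfer of the classical fact ``injective *\'etale sheaves have vanishing Čech cohomology on any cover.'' Applied internally to the *injective *sheaf $\cI$ and the *finite *cover $N(\fU)$, it yields $\str{\check{H}}^q(N(\fU),\cI)=0$ for all $q>0$, which via the identification above gives $\check{H}^q(\fU,S(\cI))=0$, as required.

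The hard part will be the naturality of Proposition~\ref{pseudoadjunction} in the representable argument $Y$: only once this is recorded do the degreewise isomorphisms above assemble into a cochain-complex isomorphism. This naturality can be read off from the explicit construction of the bijection from Yoneda, *Yoneda and the definition of $N$ on representables, but it should be written out carefully. A subsidiary point is to make sure that, because the indexing $(i_0,\ldots,i_p)$ runs over an external finite set, the external product coincides with the corresponding *product that occurs in the internal *Čech complex, so that transfer is genuinely applicable.
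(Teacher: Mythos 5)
Your proposal is correct and follows essentially the same route as the paper: reduce to finite coverings by quasi-compactness, identify the Čech complex of $S(\cI)$ for $\{U_i\to U\}$ with the (externalised) internal Čech complex of $\cI$ for the *covering $\{N(U_i)\to N(U)\}$, and conclude by transferring the vanishing of Čech cohomology for *injective *sheaves. The paper states this in three lines; your additional points (compatibility of $N$ with fibre products, naturality of the identification of sections, and the external/internal agreement for finite index sets) are exactly the details it leaves implicit.
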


\begin{proof}
Because all participating schemes are quasi compact, we only have to consider a
finite covering $\{U_i\rightarrow U\}$. But then $\{ N(U_i)\rightarrow
N(U)\}$ is a *covering, and we have
$$\check H^i(\{U_i\rightarrow U\},S(\cI))= \str{\check H^i(\{ N(U_i)\rightarrow
N(U)\},\cI)} =0$$
\end{proof}

\vspace{\abstand}

We consider the following commutative diagram
\begin{equation}\label{basechangediagrammabsolute}
\xymatrix{
\Shvet(X)\ar[dr]_{\Gamma(X,-)}&\str{\Shv}_\et(N(X))\ar[l]_-{S}\ar[d]^{\str{\Gamma(N(X),-)}}\\
& Ab
}
\end{equation}

\vspace{\abstand}

By Lemma \ref{Sandflabby}, we get for a $\cG\in\str{\Shv(N(X))}$ a spectral sequence
\begin{equation}
E_2^{p,q}=H_\et^p(X,\RR^q S\cG) \Rightarrow H_\et^{p+q}(N(X), \cG)
\end{equation}
and the edge homomorphism
\begin{equation}\label{absolutebasechangehilfs}
H^p_\et(X,S\cG)\rightarrow \str{H}_\et^p(N(X),\cG).
\end{equation}

\vspace{\abstand}

For a sheaf $\cF\in\ALGSPCFP/X\subset \Shvet(X)$ we compose the
natural morphism induced by (\ref{nattrans})
$$ H_\et^p(X,\cF)\rightarrow H_\et^p(X,S\circ N (\cF))$$
with (\ref{absolutebasechangehilfs}) and get
\begin{equation}\label{basechangeabsolute}
H_\et^p(X,\cF)\rightarrow \str{H}_\et^p(N(X),N(\cF))
\end{equation}

\vspace{\abstand}

\begin{bem}\label{handybasechangehom}
We would like to give an alternative and more handy description of the
above map. For given $A,X,\cF$ we find a subring $A_0\subset A$ of
finite type over $\Z$, a scheme $X_0\in\SCHFP/A_0$ and a \'etale
sheaf $\cF_0\in \Shvet(X_0)$ with isomorphisms
\begin{equation}\label{eq:2}
 X_0\otimes_{A_0} A\xrightarrow{\sim} X
\end{equation}
and
\begin{equation}\label{eq:3}
\cF\xrightarrow{\sim} \pi_{A_0}^*\cF_0,
\end{equation}
where  $\pi_{A_0}$ is the projection
$\pi_{A_0}:X_0\otimes_{A_0}A\rightarrow X_0$ and in (\ref{eq:3}) the
identification (\ref{eq:2}) is used.
We define the inductive system of subrings
\begin{equation}\label{eq:4}
L:=\{B\subset A\vert B\text{ is of finite type over } \Z \text{ and }
A_0\subset B\}.
\end{equation}
Then we have by \cite{SGA4II}[VII.5.7] a canonical isomorphism
\begin{equation}
H^i_\et(X,\cF)\xrightarrow{\sim} \mathrm{colim}_{B\in L}
H^i_\et(X_0\otimes_{A_0}B, \pi_B^*\cF_0),
\end{equation}
where $\pi_B$ denotes the projection $\pi_B:X_0\otimes_{A_0}
B\rightarrow X_0$.
Now for each $B\in L$ there is a canonical morphism
\begin{equation}
H^i_\et(X_0\otimes_{A_0}B,\pi^*_B\cF_0)\rightarrow
  \str{H}^i_\et(\str{(X_0\otimes_{A_0}B)},\str{(\pi^*_B\cF_0)})
  \rightarrow
    \str{H}^i_\et(N(X),N(\cF)),
\end{equation}
because $N(X)\cong \str{(X_0\otimes_{A_0}B)}\otimes_{\str{B}}A$.
These morphisms induce a morphism
\begin{equation}
H^i_\et(X,\cF)\rightarrow \str{H}^i_\et(N(X),N(\cF)),
\end{equation}
which is identical to the morphism we have just constructed above.
\end{bem}
\vspace{0.5cm}

\vspace{\abstand}
For the relative case we consider an internal ring $A\in\scR$, two schemes
$X,Y\in\SCHFP/A$ and a morphism of $A$-schemes $f:X\rightarrow Y$.
\vspace{\abstand}

We will construct a ``base change'' homomorphism in analogy to the
usual base change homomorphism in \'etale cohomology. We consider the
commutative diagram
\begin{equation}\label{basechangediagramm}
\xymatrix{
\Shvet(X)\ar[d]^{f_*}&\str{\Shv}_\et(N(X))   \ar[l]^-{S}\ar[d]^{N(f)_*}\\
\Shvet(Y)            &{\str{\Shv}_\et(N(Y)).}\ar[l]^-{S}
}
\end{equation}

\vspace{\abstand}

For a *sheaf $\cG\in\str{\Shvet}(N(X))$ we have by Lemma \ref{Sandflabby} the spectral sequence
$$ E_2^{p,q}=\RR^pf_*\RR^qS \cG \Rightarrow \RR^{p+q} (f_*\circ S) \cG$$
and the edge homomorphisms
\begin{equation}\label{edgehom1}
\RR^pf_*[S\cG]\rightarrow \RR^p(f_*\circ S) \cG.
\end{equation}
Because $N(f)_*$ maps *injectives to *injectives, we further have the
spectral sequence
$$ E_2^{p,q}=\RR^pS\RR^qN(f)_* \cG\Rightarrow \RR^{p+q}(S\circ N(f)_*)
\cG$$
with edge homomorphisms
\begin{equation}\label{edgehom2}
\RR^q (S\circ N(f)_*)\rightarrow S\RR^qN(f)_* \cG.
\end{equation}
By the commutativity of (\ref{basechangediagramm}) we can compose
(\ref{edgehom1}) and (\ref{edgehom2}) to get a morphism
\begin{equation}\label{prebasechangehom}
\RR^q f_*[S\cG]\rightarrow S\RR^q N(f)_* \cG.
\end{equation}

\vspace{\abstand}

For an $\cF\in\Shvet^{fp}(X)$ we get
\begin{equation}
\RR^qf_*[SN\cF] \rightarrow S \RR^q N(f)_*[N\cF]
\end{equation}
and then with (\ref{nattrans})
\begin{equation}\label{bbaschange}
\RR^qf_* \cF \rightarrow S \RR^q N(f)_*[N\cF].
\end{equation}

\vspace{\abstand}

\begin{bem}
As in \ref{handybasechangehom}, we can also give an easier description of
this map. The construction of \ref{handybasechangehom} gives a map of
presheaves on $\SCHFP/Y$ from the presheaf
$$(U\rightarrow Y)\mapsto H^i_\et(X\times_Y U,\cF)$$
to the presheaf
$$(U\rightarrow Y)\mapsto \str{H}^i_\et(N(X\times_Y U),N(\cF)).$$
Sheafification then induces morphism (\ref{bbaschange}).
\end{bem}

\vspace{\abstand}

If we assume further that the sheaves $\RR^qf_* \cF$
are constructible for all $q\geq 0$, we get by \ref{pseudoadjunction}
the base change homomorphism
\begin{equation}\label{basechangehom}
N\RR^qf_* \cF \rightarrow \RR^q N(f)_*[N\cF].
\end{equation}

\noindent
For proper morphisms, we know that this base change homomorphism is actually an
isomorphism:
\vspace{\abstand}
\begin{satz}\label{satz:etalecentral}
Let $f:X\rightarrow Y$ be a proper morphism with $X,Y\in\SCHFP/A$, and
$\cF$ be a constructible sheaf on $X$. Then the base change homomorphism
\begin{equation}\label{morphism:basechangeiso}
N\RR^qf_* \cF \rightarrow \RR^q N(f)_*[N\cF]
\end{equation}
is an isomorphism.
\end{satz}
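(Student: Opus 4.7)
The plan is to reduce Proposition \ref{satz:etalecentral} to the classical proper base change theorem via the descent/transfer machinery encoded in Remark \ref{handybasechangehom}. By Lemma \ref{fundlemmaN}(ii) applied several times, I would first descend the entire datum $(A,X,Y,f,\cF)$ to finite-presentation level: find a subring $B_0\subset A$ of finite type over $\Z$, schemes $X_0,Y_0\in \SCHFP/B_0$, a proper morphism $f_0:X_0\to Y_0$ whose base change along $B_0\to A$ recovers $f$, and a constructible \'etale sheaf $\cF_0$ on $X_0$ pulling back to $\cF$. This is possible because constructibility is itself a finite-presentation property (it was defined as representability by a finite \'etale algebraic space, and any such algebraic space descends by Lemma \ref{fundlemmaN}(ii)). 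By the definition of $N$ we then have $N(X)=\str{X_0}\otimes_{\str{B_0}} A$, $N(Y)=\str{Y_0}\otimes_{\str{B_0}} A$, $N(f)=\str{f_0}\otimes_{\str{B_0}} A$, and analogously $N\cF$ is the pullback of $\str{\cF_0}$ to $N(X)$.

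Next I would apply classical proper base change at level $B_0$: since $f_0$ is proper and $\cF_0$ is constructible, the sheaf $\cG_0:=\RR^q(f_0)_*\cF_0$ on $Y_0$ is constructible, and its formation commutes with arbitrary base change on $Y_0$. Combined with the compatibility of \'etale cohomology with filtered colimits of rings (SGA 4 VII.5.7), exactly as in Remark \ref{handybasechangehom} but now in the relative setting, this yields a canonical isomorphism $\RR^q f_*\cF\cong \pi^*\cG_0$, where $\pi:Y\to Y_0$ is the projection; consequently $N\RR^q f_*\cF$ is obtained from $\str{\cG_0}$ by *base *change along $\str{B_0}\to A$. On the other side, the equality $\RR^q(f_0)_*\cF_0=\cG_0$ is a first-order statement about the finite-presentation datum $(B_0,X_0,Y_0,f_0,\cF_0)$, so transfer produces $\str{\cG_0}=\RR^q\str{(f_0)}_*\str{\cF_0}$ as *sheaves on $\str{Y_0}$; then the *analogue of proper base change (itself the transfer of the classical statement) along $\str{B_0}\to A$ identifies the pullback of $\str{\cG_0}$ to $N(Y)$ with $\RR^q N(f)_*[N\cF]$.

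Composing these two identifications produces an isomorphism from $N\RR^q f_*\cF$ to $\RR^q N(f)_*[N\cF]$, and the remaining task is to verify that this composite coincides with the base change homomorphism (\ref{morphism:basechangeiso}) constructed in the paper from the edge maps (\ref{edgehom1}), (\ref{edgehom2}) and the pseudo-adjunction of Proposition \ref{pseudoadjunction}. I expect this naturality check to be the main obstacle: it requires unwinding the spectral-sequence edge morphisms defining (\ref{bbaschange}) and comparing them with the presheaf-then-sheafification recipe mentioned in the remark immediately following (\ref{bbaschange}). Once the two maps are identified, the isomorphism claim reduces to transferred proper base change together with the compatibility of $N$ with pullback along $\str{B_0}\to A$, both of which are built into the construction.
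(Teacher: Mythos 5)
Your proposal is correct and follows essentially the same route as the paper's proof: descend $(A,X,Y,f,\cF)$ to a subring of finite type over $\Z$, apply proper base change to the cartesian square on the standard side and its transfer on the *side, and identify $\str{(\RR^q f_{0*}\cF_0)}$ with $\RR^q(\str{f_0})_*\str{\cF_0}$ (the paper does this via exactness of $*$ and preservation of injectives rather than by quoting transfer directly). The compatibility check you flag as the remaining obstacle is likewise left implicit in the paper.
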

\begin{proof}
By \cite{ega43}[8.8.2] and lemma \ref{fundlemmaN}, we can choose $A_0\subset A$ of finite type
over $\Z$, schemes $X_0, Y_0\in \SCHFP/A_0$ with
$X_0\otimes_{A_0}A\xrightarrow{\sim} X$ and
$Y_0\otimes_{A_0}A\xrightarrow{\sim} Y$,
$\cF_0\in\ALGSPCFP/X_0\subset\Shvet/X_0$ with
$\pi_X\cF_0\xrightarrow{\sim}\cF$
and a proper(!) morphism
$f_0:X_0\rightarrow Y_0$, such that the diagram
$$
\xymatrix@C=20mm{
X_0\otimes_{A_0}A\ar[r]^{f_0\otimes id_A} \ar[d]^{\sim} &
Y_0\otimes_{A_0} A \ar[d]^{\sim}\\
X \ar[r]_{f} & Y
}
$$
commutes. By the base change theorem for proper morphisms for the
cartesian square
$$
\xymatrix{
X\ar[r]^{\pi_X} \ar[d]_{f} \ar@{}[dr]|{\Box} & X_0 \ar[d]^{f_0} \\
Y\ar[r]_{\pi_Y}                              & Y_0
}
$$
we have
\begin{equation}\label{proofbasechangefirst}
\RR^if_* \cF\simeq \RR^if_*\pi_X^*\cF_0 \simeq \pi_Y^* \RR^if_{0*} \cF_0.
\end{equation}
By construction we have
\begin{equation}
N(X)\simeq \str{X}_0\otimes_{\str{A}_0}A,
N(Y)\simeq \str{Y}_0\otimes_{\str{A}_0}A,\text{ and }
N(f)=\str{f_0}\otimes_{\str{A}_0}A.
\end{equation}
 So we have the cartesian square
\begin{equation}\label{proofbasechangeN}
\xymatrix@C=12mm@R=12mm{
N(X) \ar[r]^{\pi_{N(X)}} \ar[d]_{N(f)} \ar@{}[dr]|{\Box} &
\str{X}_0\ar[d]^{\str{f}_0}\\
N(Y)\ar[r]_{\pi_{N(Y)}}    &  \str{Y}_0
}
\end{equation}
and also by construction the identification
$$ N(\cF)\simeq \pi_{N(X)}^*\str{\cF_0}.$$
By the *base change theorem for the *proper morphism $\str{f}_0$ in diagram
(\ref{proofbasechangeN}), we have
\begin{equation}\label{proofbasechangesecond}
\RR^i N(f)_* N(\cF) \simeq \RR^i N(f)_* \pi_{N(X)}^*\str{\cF_0} \simeq \pi_{N(Y)}^* \RR^i(\str{f}_0)_* \str{\cF}_0.
\end{equation}
Because * is exact and maps injectives to injectives (cf
\cite{nsetale}), we have
\begin{equation}\label{changehilfeeq}
\str{(\RR^i
  f_{0*}\cF_0)})\simeq \RR^i(\str{f}_0)_*\str{\cF}_0.
\end{equation}
Now we get what we want:
$$ N(\RR^i f_* \cF)\stackrel{~(\ref{proofbasechangefirst})}{\simeq} \pi_{N(Y)}^*(\str{(\RR^i
  f_{0*}\cF_0)})\stackrel{~(\ref{changehilfeeq})}{\simeq} \pi_{N(Y)}^*(\RR^i(\str{f}_0)_*
\str{\cF}_0)\stackrel{(\ref{proofbasechangesecond})}{\simeq}
\RR^i N(f)_* N(\cF).$$
\end{proof}

\vspace{\abstand}

\begin{cor}\label{cor:basechangeabsolute}
Let $K\in\str{\cR}$ be a separably closed field, $f:X\rightarrow \spec{K}$
proper and $\cF$ a constructible \'etale sheaf on $X$. Then the canonical
morphism \eqref{basechangeabsolute}
$$H^i_\et(X,\cF)\rightarrow \str{H}^i_\et(N(X),N(\cF))$$
is an isomorphism.
\end{cor}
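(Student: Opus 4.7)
The plan is to derive the absolute statement from the relative base change isomorphism of Proposition \ref{satz:etalecentral} applied to the structural morphism $f\colon X\to \spec{K}$, by taking external and internal global sections on $\spec{K}$ and $\str{\spec{K}}=N(\spec{K})$ respectively and identifying them with stalks at the unique (*-)geometric point.

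First, since $f$ is proper and $\cF$ is constructible, the proper base change theorem in \'etale cohomology guarantees that $\RR^{i}f_{*}\cF$ is a constructible \'etale sheaf on $\spec{K}$ for every $i\ge 0$. Proposition \ref{satz:etalecentral} then provides an isomorphism
\begin{equation}\label{eq:applypropcentral}
N(\RR^{i}f_{*}\cF)\;\xrightarrow{\sim}\;\RR^{i}N(f)_{*}N(\cF)
\end{equation}
of *-sheaves on $\str{\spec{K}}$.

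Next, because $K$ is separably closed, the base $\spec{K}$ is an \'etale point and the Leray spectral sequence for $f$ collapses, giving
$$H_{\et}^{i}(X,\cF)\;\cong\;\Gamma\bigl(\spec{K},\RR^{i}f_{*}\cF\bigr)\;\cong\;(\RR^{i}f_{*}\cF)_{\bar{x}},$$
where $\bar{x}=\id_{\spec{K}}$. By the transfer principle applied to the same statement, the internal Leray spectral sequence for $N(f)$ on the *-separably-closed base $\str{\spec{K}}$ likewise degenerates and yields
$$\str{H}_{\et}^{i}\bigl(N(X),N(\cF)\bigr)\;\cong\;\str{\Gamma}\bigl(\str{\spec{K}},\RR^{i}N(f)_{*}N(\cF)\bigr)\;\cong\;\bigl[\RR^{i}N(f)_{*}N(\cF)\bigr]_{N(\bar{x})}.$$

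Finally, applying Proposition \ref{satz:stalkiso} to the constructible sheaf $\RR^{i}f_{*}\cF$ on $\spec{K}$ and to the geometric point $\bar{x}$ yields
$$(\RR^{i}f_{*}\cF)_{\bar{x}}\;\xrightarrow{\sim}\;N(\RR^{i}f_{*}\cF)_{N(\bar{x})},$$
and combining this with \eqref{eq:applypropcentral} on the right produces an isomorphism $H_{\et}^{i}(X,\cF)\xrightarrow{\sim}\str{H}_{\et}^{i}(N(X),N(\cF))$.

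The main obstacle is bookkeeping rather than new mathematics: one has to verify that the composite isomorphism just constructed coincides with the canonical morphism \eqref{basechangeabsolute}. This amounts to tracing the definition of \eqref{basechangeabsolute} through the edge homomorphism of the spectral sequence arising from diagram \eqref{basechangediagrammabsolute} and the natural transformation \eqref{nattrans}, and comparing it with the stalk description provided by Proposition \ref{satz:stalkiso} together with the base-change isomorphism of Proposition \ref{satz:etalecentral}. This is routine but requires some care in matching the two edge homomorphisms.
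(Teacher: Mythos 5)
Your proposal is correct and follows essentially the same route as the paper: apply the relative isomorphism of Proposition \ref{satz:etalecentral} to $f\colon X\to\spec{K}$, take sections over the separably closed base $\str{\spec{K}}$, and identify $\Gamma(\str{\spec K},N(\RR^i f_*\cF))\cong\Gamma(\spec K,\RR^i f_*\cF)$ via the stalk identification of Proposition \ref{satz:stalkiso} (i.e.\ \cite{enlsch}[Theorem 4.13]). Your version merely spells out the Leray degeneration, the constructibility of $\RR^i f_*\cF$, and the compatibility with \eqref{basechangeabsolute}, all of which the paper leaves implicit.
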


\begin{proof}
We just take the section of (\ref{morphism:basechangeiso}) on
$\str{\spec K}$ and identify
$$ \Gamma(\str{\spec K},N(\RR^i f_*\cF))\cong \Gamma(\spec K, \RR^i
f_*\cF)$$
by \cite{enlsch}[Theorem 4.13] as in the proof of Proposition
\ref{satz:stalkiso}.
\end{proof}

Now we want to prove a compatibility between the just defined morphism on
cohomology and a morphism on the Picard group defined in \cite{enlsch}:

\vspace{\abstand}

\begin{satz}\label{h1undpicundN}
Let $A\in\scR$ and $X\in\SCHFP/A$. Then the diagram
\begin{equation}
\xymatrix{
H^1_\et(X,\G_m)\ar[r]^-{(i)} \ar[d]_{\wr} &
   \str{H}^1_\et(N(X),\G_m)\ar[d]^{\wr} \\
\Pic{X}\ar[r]_-{(ii)} &\sPic{N(X)}
}
\end{equation}
is commutative. Here (i) is defined by (\ref{basechangeabsolute}),
and (ii) is defined by \cite{enlsch}[Corollary 5.15].
\end{satz}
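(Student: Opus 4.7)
The strategy is to show that, after unwinding the vertical identifications $H^1_\et(Z,\G_m)\cong\Pic Z$ (and its internal counterpart), both horizontal maps (i) and (ii) are governed by the same recipe: descend the input to a finitely generated $\Z$-subring of $A$, apply the transfer $*$, and pull back along the canonical projection $N(X)\to \str{(X_0\otimes_{A_0}B)}$.

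First I would use Remark \ref{handybasechangehom} to describe map (i) concretely. Pick $A_0\subset A$ of finite type over $\Z$ and $X_0\in\SCHFP/A_0$ with $X_0\otimes_{A_0}A\cong X$. Given $\alpha\in H^1_\et(X,\G_m)$, by \cite{SGA4II}[VII.5.7] there exist $B\in L$ (in the notation of \eqref{eq:4}) and $\alpha_B\in H^1_\et(X_0\otimes_{A_0}B,\G_m)$ mapping to $\alpha$. Then (i) sends $\alpha$ to the pullback of $\str{\alpha_B}\in \str{H}^1_\et(\str{(X_0\otimes_{A_0}B)},\str{\G}_m)$ along the projection $N(X)\to \str{(X_0\otimes_{A_0}B)}$.

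Next I would unwind (ii). By construction in \cite{enlsch}[Corollary 5.15], the map on Picard groups is built from the coherent-sheaf functor $N$ of \cite{enlsch}, and has the analogous description: a line bundle $L$ on $X$ descends (after possibly enlarging $B$) to an invertible sheaf $L_B$ on $X_0\otimes_{A_0}B$, one transfers to the $^*$invertible $^*$sheaf $\str{L_B}$ on $\str{(X_0\otimes_{A_0}B)}$, and $N(L)$ is the pullback to $N(X)$ along the same projection. To identify the two recipes, I would use that the classical isomorphism $H^1_\et(Z,\G_m)\cong\Pic Z$ is given at the level of \v{C}ech data by reading off the glueing isomorphisms of a locally trivialized line bundle. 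Hence $\alpha_B$ corresponds to $[L_B]$, and applying $*$ to the trivializing \'etale cover and its transition functions produces the \v{C}ech data of $\str{L_B}$, so $\str{\alpha_B}\leftrightarrow [\str{L_B}]$ under the internal version of the same isomorphism. Since pullback along $N(X)\to \str{(X_0\otimes_{A_0}B)}$ is natural in both formulations, commutativity follows.

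The main obstacle is the bookkeeping of identifications: one must verify that the right-hand vertical arrow $\str{H}^1_\et(N(X),\str{\G}_m)\cong\sPic{N(X)}$ really is the $^*$-transfer of the classical $H^1$--Pic isomorphism and hence compatible with the recipe above. This is essentially formal, since the \v{C}ech cocycle description of the Picard--$H^1$ isomorphism can be written in bounded language over the superstructure and therefore transfers; the remaining work is to check that the pullback along $N(X)\to \str{(X_0\otimes_{A_0}B)}$ intertwines the internal $H^1$--Pic identification on $\str{(X_0\otimes_{A_0}B)}$ with the one on $N(X)$, which is again naturality of the \v{C}ech construction.
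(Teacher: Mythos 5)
Your proposal follows essentially the same route as the paper: reduce both maps to a common model $X_0\otimes_{A_0}B$ over a finitely generated $\Z$-subring, transfer, and pull back along the projection to $N(X)$, so that commutativity reduces to the compatibility of the classical $H^1$--Pic isomorphism with $*$ on the model. The paper simply declares this last compatibility "clear", whereas you spell it out via the \v{C}ech cocycle description and naturality of pullback; the argument is correct.
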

\begin{proof}
Both horizontal maps are defined using a model $X_0$ of $X$ which is defined
over a subring $A_0\subset A$ of finite presentation over
$\Z$. Therefore we only have to show that the diagram
$$
\xymatrix{
H^1_\et(X_0,\G_m)\ar[r]^-{(i)} \ar[d]_{\wr} &
\str{H}^1_\et(\str{X_0},\str{\G_m})\ar[d]^{\wr} \\
\Pic{X_0}\ar[r]_-{(ii)} &\sPic{\str{X_0}}
}
$$
is commutative. But this is clear.
\end{proof}

\vspace{\abstand}

\section{Independence of $l$ of Betti numbers}

Now we give an application of the first section
to the problem of the independence of $l$ of Betti numbers for the
\'etale cohomology of separated schemes of finite type over finite
fields. The following is conjectured:
\begin{conj}\label{sec:an-applicationconjecture}
Let $k$ be a field finite field, $\bar{k}$ be an algebraic closure
of $k$, and $X$ a separated scheme of finite type over $k$. Then the
dimension of the $l$-adic cohomology with compact support
$$\dim_{\Q_l}H^i_{c,\et}(X\otimes_k\bar{k},\Q_l)$$
is independent of $l$. (see for example \cite{katz})
\end{conj}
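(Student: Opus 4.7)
The plan is to exploit the base-change results of Section~1 to transfer the independence-of-$l$ question to a setting where it follows from the Betti comparison theorem in characteristic zero.

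First I would reduce to the smooth projective case. Nagata compactification $X\hookrightarrow \bar X$ with boundary $D=\bar X\setminus X$ and the long exact sequence in compactly supported cohomology
\[
\cdots\to H^i_c(X,\Q_l)\to H^i_c(\bar X,\Q_l)\to H^i_c(D,\Q_l)\to\cdots,
\]
together with induction on $\dim X$, reduce the problem to proper $\bar X/k$. De Jong's theorem on alterations then reduces further to smooth projective varieties via the associated cohomological descent spectral sequence: a generically étale proper alteration $\bar X'\to\bar X$ of degree $d$ admits a trace map that exhibits $H^*(\bar X,\Q_l)$ as a direct summand of $H^*(\bar X',\Q_l)$, so $\Q_l$-dimensions are controlled by those of the smooth projective model.

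Second, for $X/\F_q$ smooth projective, take the trivial model $X_0=X$ over $A_0=\F_q$ (which is of finite type over $\Z$) and apply Corollary~\ref{cor:basechangeabsolute} to the constructible sheaves $\mu_{l^n}$. Passing to the inverse limit over $n$ and tensoring with $\Q_l$ yields an isomorphism
\[
H^i_\et(X\otimes_k\bar k,\Q_l)\;\xrightarrow{\sim}\;\str{H}^i_\et(N(X\otimes_k\bar k),\Q_l),
\]
and analogously for any prime $l'\neq\ch{k}$. Thus $\dim_{\Q_l}H^i_c(X\otimes_k\bar k,\Q_l)$ is captured as the internal (hyperfinite) dimension of a ${}^*$vector space over $\str\Q_l$, and to establish independence of $l$ it suffices to match these internal dimensions as $l$ varies.

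Third, one would like to exhibit a lift of $N(X)$ to a characteristic-zero scheme and invoke the comparison with Betti cohomology, where independence of $l$ is automatic. This is the main obstacle: the base ring $\str\F_q$ still has characteristic $p$, so no Betti realisation is available directly from $N(X)$ over a fixed finite field. To access characteristic zero one must pass to an ultraproduct along a sequence of primes $p_n\to\infty$, which in turn forces a choice of a family $\{X_n/\F_{p_n}\}$ of uniformly bounded complexity (Hilbert polynomial, number of defining equations, their degrees); what emerges is only the quantitative statement ``$\dim_{\Q_l}H^i_c$ is independent of $l$ once $\ch{k}$ is large relative to the complexity of $X$'', which is Theorem~\ref{indepofl}. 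Removing the complexity hypothesis — and so proving the conjecture in the full generality stated — appears to lie beyond the reach of the enlargement techniques developed here, because the internal arithmetic of $N(X)$ depends delicately on the chosen subring $A_0\subset k$ and the ultraproduct construction provides no uniform control for a single fixed variety in positive characteristic.
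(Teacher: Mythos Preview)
The statement you are trying to prove is explicitly a \emph{conjecture} in the paper (environment \texttt{conj}), not a theorem; the paper offers no proof of it. It is stated as an open problem with a reference to Katz, and the paper's contribution is the weaker asymptotic statement Theorem~\ref{indepofl}. So there is no ``paper's own proof'' to compare your proposal against, and indeed your final paragraph correctly recognises that the enlargement machinery only yields Theorem~\ref{indepofl}, not the conjecture itself. What you have written is therefore not a proof but an (accurate) explanation of why the method falls short.

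Beyond that, your reduction steps in the first paragraph contain genuine gaps even as a strategy. The long exact sequence for the pair $(\bar X,D)$ does \emph{not} reduce independence of $l$ for $X$ to independence for $\bar X$ and $D$: knowing that $\dim_{\Q_l}H^i_c(\bar X)$ and $\dim_{\Q_l}H^i_c(D)$ are $l$-independent for all $i$ says nothing about $\dim_{\Q_l}H^i_c(X)$, because the ranks of the boundary maps can a priori vary with $l$. (Only the alternating-sum Euler characteristic is forced.) Likewise, an alteration $\bar X'\to\bar X$ exhibits $H^*(\bar X,\Q_l)$ as a direct summand of $H^*(\bar X',\Q_l)$, which gives only an inequality of Betti numbers, not equality; $l$-independence upstairs does not descend. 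A genuine reduction to the smooth projective case would require something like a full simplicial smooth proper hypercovering together with control of all the maps in the descent spectral sequence uniformly in $l$ --- which is essentially as hard as the conjecture itself.
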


It is well known that the corresponding statement is true when the ground field
$k$ is of characteristic zero (cf. \cite{SGA4III}[exp. XVI, 4]).
Furthermore, it is generally believed that a
theorem which is true for fields of characteristic zero is also true
for fields of large positive characteristic. The aim of this section
is to turn this belief into a precise statement in the case of the
independence of $l$ of Betti numbers.

First we prove the following general result about the dimension of
$l$-adic cohomology.

\vspace{\abstand}

\begin{thm}\label{ladicdimensionandN}
Let $B\in\cR$ be of finite type over $\Z$, $X\xrightarrow{f}\spec{B}$ a
proper morphism and $(\cG_n)_{n\in\N}$ an AR-$l$-adic system of
constructible \'etale sheaves on $X$. Let $K\in\str{\cR}$ be an algebraically
closed field and $\str{B}\rightarrow K$ an internal
homomorphism. Then we have
$$\dim_{\Q_l}(\lim_{n\in\N} H^i_\et(X_K,\cG_n)\otimes_{\Z_l}\Q_l) =
\dim_{\str{\Q_l}}(\str{\lim}_{n\in\str{\N}}
\str{H}^i_\et(\str{X}_K,\str{\cG}_n)\otimes_{\str{\Z_l}}\str{\Q_l})
$$
\end{thm}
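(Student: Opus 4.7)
The plan combines the pointwise identification from Corollary~\ref{cor:basechangeabsolute} with a transfer argument controlling the AR-$l$-adic limit: although the external $\lim_{n\in\N}$ and the internal $\str{\lim}_{n\in\str{\N}}$ differ a priori, both should compute the same rank thanks to uniform standard bounds coming from the AR-$l$-adic structure.

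First, for each $n\in\N$ I would build a canonical isomorphism
$$M_n := H^i_\et(X_K,\cG_n) \xrightarrow{\sim} \str{H}^i_\et(\str{X}_K,\str{\cG}_n).$$
Proposition~\ref{satz:etalecentral} gives $N(R^if_*\cG_n)\cong R^iN(f)_*N(\cG_n)$, and because $X,f,\cG_n$ are defined over the standard ring $B$ of finite type over $\Z$, the construction in Proposition/Definition~\ref{satzconstrN} identifies $N(X)=\str{X}$, $N(f)=\str{f}$ and $N(\cG_n)=\str{\cG}_n$. Combining Proposition~\ref{satz:stalkiso} applied to the constructible sheaf $R^if_*\cG_n$ at the geometric point $\spec{K}\to\spec{B}$ with proper base change on both sides yields the required isomorphism, functorial in~$n$ and hence compatible with the transition maps of the AR-$l$-adic system.

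Next, I would invoke the classical SGA-type uniform finiteness for AR-$l$-adic higher direct images under the proper morphism $f$ to produce standard constants $c,C\in\N$ such that $(M_n)_{n\in\N}$ is AR-$l$-adic with constant $c$ and its $\Z_l$-length satisfies $|\ell(M_n)-d(n+1)|\le C$ for every $n$, where $d$ is the left-hand side of the theorem. The bounds are uniform in the geometric point of $\spec{B}$, so transferring the statement with the \emph{fixed standard} $c,C$ gives, for the internal system $(\str{H}^i_\et(\str{X}_K,\str{\cG}_n))_{n\in\str{\N}}$, the analogous bound $|\str{\ell}(\str{H}^i_\et(\str{X}_K,\str{\cG}_n))-d'(n+1)|\le C$ for every $n\in\str{\N}$, where $d'$ is the internal rank on the right-hand side.

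Evaluating both estimates at a standard $n$ and using the first step---which makes the two lengths agree for standard indices---produces $|(d-d')(n+1)|\le 2C$ for all $n\in\N$, and letting $n\to\infty$ forces $d=d'$. The main obstacle is packaging the AR-$l$-adic asymptotic with fixed standard constants that are uniform in the base point, so that transfer yields exactly the same constants internally; this uniformity rests on the constructibility of $l$-adic rank and torsion for constructible $l$-adic sheaves on $\spec{B}$ and on the effective finiteness theorems for higher direct images of AR-$l$-adic systems along proper morphisms.
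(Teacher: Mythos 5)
Your first step (the level-by-level isomorphism $H^i_\et(X_K,\cG_n)\xrightarrow{\sim}\str{H}^i_\et(\str{X}_K,\str{\cG}_n)$ for standard $n$, via Corollary~\ref{cor:basechangeabsolute} and the identifications $N(X_K)=\str{X}_K$, $N(\cG_n)=\str{\cG}_n$) is exactly the paper's first step. The gap is in your second step: the estimate $|\ell(M_n)-d(n+1)|\le C$ with a uniform standard constant $C$ is simply not a consequence of the AR-$l$-adic property, and no ``SGA-type uniform finiteness'' gives it. An AR-$l$-adic system is only AR-isomorphic to an $l$-adic one, and AR-isomorphism does not control the lengths of the individual terms: take $\cG_n=\cF_n\oplus\cA_n$ with $(\cF_n)$ honestly $l$-adic and $(\cA_n)$ a null system (zero transition maps) of constant sheaves $(\Z/l)^{a_n}$ with $a_n$ growing arbitrarily fast; this is AR-$l$-adic, its limit (and hence $d$) is unchanged, but $\ell\bigl(H^i_\et(X_K,\cG_n)\bigr)$ is unbounded relative to $d(n+1)$. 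So the standard statement you propose to transfer is false, and the whole comparison of slopes collapses.

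The missing idea is precisely what the paper supplies in the proposition immediately following the theorem: from finitely many terms of the AR-system one constructs, using fixed standard constants $n_0,n_1,n_2$, a \emph{torsion-free $l$-adic} system $\cF_n:=\im(\cH_{n_1+n_2+n}\to\cH_{n_2+n})/l^{n+1}$ (with $\cH_n=\cG_n/\ker(l^{n_0})$) which is AR-isomorphic to the original one. For such a normalized system the rank of the limit is visible at every finite level, so the standard-level isomorphisms of your first step already determine the internal rank. Crucially, the same finite recipe with the same standard constants computes the normalization of the internal system, because $\str{\RR}^if_*\cG_n\xrightarrow{\sim}\RR^i(\str{f})_*\str{\cG}_n$, i.e.\ the internal system is the transfer of the standard one; this replaces your attempted transfer of a (false) length estimate. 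A secondary point: your constant $d$ is defined at the internal point $K$, which is not a geometric point of $\spec{B}$, so even a correct uniform estimate would have to be formulated with the rank quantified over geometric points of $\spec{B}$ before transfer; the paper's route avoids this issue entirely by working with the sheaf-level system $(\RR^if_*\cG_n)$ over $B$ and only then taking the stalk at $K$.
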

\begin{proof}
We have $N(X_K)=\str{X}_K$ and $N(\cG_n)=\str{\cG_n}$ on
$\str{X_K}$. Therefore by (\ref{cor:basechangeabsolute}) the canonical morphism
$$ H^i_\et(X_K,\cG_n)\rightarrow
\str{H}^i_\et(\str{X}_K,\str{\cG}_n)$$
is an isomorphism for all $n\in\N$. If both sides were $l$-adic respectively $\str{l}$-adic
systems, the claim would follow. But by applying the next proposition to the AR-$l$-adic
system $(\RR^if_*\cG_n)_{n\in\N}$ and using the fact that
$$    \str{\RR}^i f_*\cG_n\xrightarrow{\sim}\RR^i(\str{f})\str{\cG_n}$$
is an isomorphism,
we see that we only need a finite number of terms
to calculate a term of $l$-adic respectively $\str{l}$-adic systems which are AR-isomorphic to the systems above.
\end{proof}

\vspace{\abstand}

\begin{satz}
Let $X$ be a noetherian scheme and $\cG=(\cG_n)_{n\in\N}$ be an
AR-$l$-adic system of constructible \'etale sheaves on $X$. Then there
are constants $n_0,n_1,n_2\in\N$ with the following property: If we
define
$$\cH_n:=\cG_n/\ker(\cG_n\xrightarrow{\cdot l^{n_0}}\cG_n)
\text{ and } \cF_n:=\im(\cH_{n_1+n_2+n}\rightarrow
\cH_{n_2+n})/l^{n+1},$$
the system $(\cF_n)_{n\in\N}$ is a torsion free $l$-adic system which
is up to torsion AR-$l$-isomorphic to $\cG$.
\end{satz}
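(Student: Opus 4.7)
The plan is to exploit the AR-$l$-adic hypothesis to extract uniform bounds on the ``non-$l$-adic behaviour'' of the system, and then carry out an Artin--Rees type construction inside the noetherian abelian category of constructible \'etale sheaves on $X$. By assumption there exist an honest $l$-adic system $(\cG_n')$ and morphisms $\cG \to \cG' \to \cG \to \cG'$ of projective systems whose successive compositions are multiplication by $l^r$ for some fixed $r$. I will use this $r$ (and the noetherianness of $X$) to choose $n_0$, $n_1$ and $n_2$ in turn.

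First I would choose $n_0$. The AR-$l$-adic property forces the $l$-power torsion subsheaves $T_n := \bigcup_k \ker(l^k \colon \cG_n \to \cG_n)$ to be annihilated by a fixed power of $l$ that is independent of $n$; indeed, an element of $T_n$ which persists long enough under the transition maps must already become zero after going down $r$ steps in the $l$-adic companion system. Pick $n_0$ equal to (say) $2r$. Then $\cH_n = \cG_n/\ker(l^{n_0})$ has the property that multiplication by $l^{n_0}$ is injective on $\cH_n$, uniformly in $n$, and the canonical projection $\cG \to \cH$ is an AR-isomorphism up to torsion (its kernel is null-AR because the torsion is uniformly bounded).

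Next I would choose $n_1$ and $n_2$ via the Artin--Rees lemma, which is available because the category of constructible sheaves on the noetherian scheme $X$ is noetherian. Applied to the transition maps in $\cH$ this yields an $n_1$ such that for every $n$,
$$
\im\bigl(\cH_{n_1+n} \to \cH_n\bigr) \cap l^{n+1}\cH_n \;=\; l^{n+1}\cdot \im\bigl(\cH_{n_1+n}\to \cH_n\bigr);
$$
the AR-$l$-adic hypothesis is what lets $n_1$ be chosen uniformly. An additional shift by $n_2$ is needed so that the images produced actually land inside $l^{n_2}\cH$, ensuring that on quotienting by $l^{n+1}$ the residual $l^{n_0}$-torsion introduced by $\cH$ has been absorbed. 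With these choices, the transition maps $\cF_{n+1} \to \cF_n$ coming from $\cH_{n_1+n_2+n+1}\to \cH_{n_1+n_2+n}$ and $\cH_{n_2+n+1}\to \cH_{n_2+n}$ fit into the standard commutative diagram showing that $(\cF_n)$ is a genuinely $l$-adic system.

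Torsion-freeness of each $\cF_n$ is then a direct consequence of the Artin--Rees equality above: any $l$-torsion in $\im(\cH_{n_1+n_2+n}\to \cH_{n_2+n})$ must, after at most $n_0$ steps of multiplication by $l$, lie in the arbitrary high powers of $l$ allowed by Artin--Rees, hence be killed when reducing modulo $l^{n+1}$. Finally, to see that $(\cF_n)$ is AR-$l$-isomorphic to $\cG$ up to torsion, I would combine the three comparison maps $\cG_n \to \cH_n$, $\cH_{n_1+n_2+n}\to \cH_{n_2+n}$ and $\cH_{n_2+n}\to \cF_n$ with the morphisms into $\cG'$ furnished by the AR-hypothesis; the compositions are bounded powers of $l$, which is exactly the data of an AR-$l$-isomorphism up to torsion.

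The main obstacle is organising the choice of $n_1$ and $n_2$ so that a single Artin--Rees constant works simultaneously at every level of the projective system; this requires that one really uses the uniform AR-bound $r$ coming from the AR-$l$-adic assumption rather than a level-by-level Artin--Rees constant, and it is at this point that the noetherianness of $X$ together with constructibility of all $\cG_n$ is essential.
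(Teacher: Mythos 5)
Your overall architecture (first kill the $l$-power torsion, then pass to stabilized images reduced mod $l^{n+1}$) matches the paper's, but the two uniformity claims your argument rests on are respectively false and unsupported. First, the choice of $n_0$: you claim the torsion of the $\cG_n$ is killed by a power of $l$ depending only on the AR-constant $r$, and that multiplication by $l^{n_0}$ is then injective on each $\cH_n$. This cannot be right: every $\cG_n$ (hence every $\cH_n$) is itself an $l$-power torsion sheaf, so $l^{n_0}$ is never injective on $\cH_n$ unless $\cH_n=0$; and even reformulated as a statement about the projective system, the required exponent cannot be bounded in terms of $r$ alone --- take $\cG$ already $l$-adic (so $r=0$) with torsion of large exponent, e.g.\ $\cG_n=\Z/l^{\min(n+1,100)}$, which forces $n_0\geq 100$. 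The correct statement, and the one the paper proves, lives in the AR-category: the ascending chain of sub-systems $\ker(l^m\colon\cG\to\cG)$ becomes stationary up to AR-isomorphism, and this is extracted from the noetherianness of the category of AR-$l$-adic systems of constructible sheaves (\cite{kiehl1}[Prop.\ 12.12]), not from the defining comparison with an $l$-adic companion. Accordingly $\cH=\cG/\ker(l^{n_0})$ is only AR-torsion-free, and your parenthetical assertion that the kernel of $\cG\to\cH$ is AR-null is also false in general: that kernel is exactly the torsion which the words ``up to torsion'' in the statement are there to discard.

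For $n_1,n_2$ the gap is of a different kind: the uniform Artin--Rees equality $\im(\cH_{n_1+n}\to\cH_n)\cap l^{n+1}\cH_n=l^{n+1}\im(\cH_{n_1+n}\to\cH_n)$ is asserted (``the AR-$l$-adic hypothesis is what lets $n_1$ be chosen uniformly'') rather than derived, the role you assign to $n_2$ (forcing images into $l^{n_2}\cH$) is not the mechanism that makes the construction work, and you never verify the defining property of an $l$-adic system, namely that the transition maps induce isomorphisms $\cF_{n+1}/l^{n+1}\xrightarrow{\sim}\cF_n$, nor the flatness of each $\cF_n$ over $\Z/l^{n+1}$. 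The paper obtains this second step from the standard structure result for AR-torsion-free AR-$l$-adic systems (uniform stabilization of the images $\im(\cH_{m+n}\to\cH_n)$, again with the noetherianness of the AR-category as input). To repair your argument you would need to replace the bound $n_0=2r$ by one that also accounts for the torsion of the $l$-adic companion (which is bounded because it corresponds to a constructible, hence noetherian, $\Z_l$-sheaf) and then actually prove the uniform image stabilization instead of postulating it.
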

\begin{proof}
The category of AR-$l$-adic systems of constructible \'etale sheaves is
by \cite{kiehl1}[Prop. 12.12] noetherian. Therefore there is an $n_0$ so that for all $m>n_0$
the inclusion
$$\ker(\cG\xrightarrow{\cdot l^{n_0}} \cG)\rightarrow
\ker(\cG\xrightarrow{\cdot l^{m}} \cG)$$
is an AR-isomorphism. So
$$\cG/\ker(\cG\xrightarrow{\cdot l^{n_0}}\cG)=:\cH$$
is an AR-torsion free AR-$l$-adic system. Now for each AR-$l$-adic
system $\cH$ there are integers $n_1,n_2\in\N$ such that
$$\cF_n:=\im(\cH_{n_1+n_2+n}\rightarrow \cH_{n_2+n})/l^{n+1}$$
is an $l$-adic sheaf which is AR-isomorphic to $\cH$.
\end{proof}

\vspace{\abstand}

Now we restrict ourselves to projective varieties to have an easier
notion of complexity. For natural
numbers $n,d\in\N$ and a field $k$ we define $H(n,d,k)$ as the set of
all closed subschemes of $\P^n_k$ of degree $d$.

\vspace{\abstand}
Now we consider the function $B_i:=B^{d,n}_i:\PP\rightarrow
\N\cup\{\infty\}$ on the set of prime numbers $\PP$ which is defined by
$$p\mapsto\max\left\{m\in\N\left\vert
\begin{array}{c}
\text{ for all finite fields } k \text{ with }\ch{k}=p\\
\text{ and all } X\in H(n,d,k) \text{ and all primes } l_1,l_2<m\\
\text{ we have }\dim_{\Q_{l_1}}H^i_\et(\bar{X},\Q_{l_1}) =
\dim_{\Q_{l_2}}H^i_\et(\bar{X},\Q_{l_2})
\end{array}\right.\right\}
$$

\vspace{\abstand}

Note that conjecture \ref{sec:an-applicationconjecture} says $B^{d,n}_i\equiv\infty$.
From the fact that the Conjecture holds in characteristic zero, we can deduce the following with our methods:

\vspace{\abstand}

\begin{thm}\label{indepofl}
With the above notations we have:
$$\lim_{p\rightarrow\infty} B_i^{d,n}(p)=\infty$$
\end{thm}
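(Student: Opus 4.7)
My approach is to argue by contradiction, using Theorem \ref{ladicdimensionandN} as a bridge between the two characteristics and invoking the known independence of $l$ in characteristic zero. Suppose $\lim_{p\to\infty} B^{d,n}_i(p) \neq \infty$. Then there exist a bound $M \in \N$ and an infinite increasing sequence of primes $p_1 < p_2 < \cdots$ with $B^{d,n}_i(p_j) \leq M$ for every $j$. Unpacking the definition produces, for each $j$, a finite field $k_j$ of characteristic $p_j$, a subscheme $X_j \in H(n,d,k_j)$, and primes $l_{1,j}, l_{2,j} < M$ satisfying $\dim_{\Q_{l_{1,j}}} H^i_\et(\bar X_j, \Q_{l_{1,j}}) \neq \dim_{\Q_{l_{2,j}}} H^i_\et(\bar X_j, \Q_{l_{2,j}})$. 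Since only finitely many pairs of primes lie below $M$, a first pigeonhole step fixes $(l_1, l_2)$ across all $j$.

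Next I assemble the $X_j$ into a single family of finite type over $\Z$. Classical boundedness (via Mumford's Castelnuovo--Mumford regularity bound) shows that only finitely many Hilbert polynomials occur for degree-$d$ subschemes of $\P^n$, so a second pigeonhole fixes a Hilbert polynomial $P$ common to all $X_j$ in a further subsequence. These now correspond to $\bar k_j$-valued points $x_j$ of $H := \Hilb{P}{\P^n}{\Z}$, a scheme of finite type over $\Z$ carrying a proper universal family $\cX \to H$. A third pigeonhole on a finite affine cover of $H$ lets me concentrate all $x_j$ inside a fixed affine open $U = \spec B \subset H$ with $B \in \cR$ of finite type over $\Z$, and I write $f \colon \cX|_U \to \spec B$ for the induced proper morphism.

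Fixing a nonstandard $\nu \in \str{\N}$, the internal sequence $(\bar k_j, x_j)_j$ furnishes an internal algebraically closed field $K := \bar k_\nu$ and an internal point $x_\nu \colon \str{\spec K} \to \str{U}$, equivalently an internal ring homomorphism $\str{B} \to K$. Because $p_j \to \infty$, every standard prime is invertible in $K$, so $K$ has standard characteristic $0$; and transfer applied to polynomial-root existence for standard polynomials of bounded degree shows that $K$ is algebraically closed as a standard field. Theorem \ref{ladicdimensionandN}, applied to $f$, to $K$, and to the AR-$l$-adic systems $(\Z/l_k^n)_n$ for $k=1,2$, then yields standard natural numbers $n_k$ with
\[
n_k \;=\; \dim_{\Q_{l_k}} H^i_\et(X_K, \Q_{l_k}) \;=\; \dim_{\str{\Q_{l_k}}} \str{H}^i_\et(\str{X_K}, \str{\Q_{l_k}}),
\]
where $X_K := \cX|_U \otimes_B K$.

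To finish, the classical characteristic zero independence of $l$ (\cite{SGA4III}, exp.~XVI) for the proper scheme $X_K$ over the algebraically closed field $K$ of characteristic $0$ forces $n_1 = n_2$. On the other hand, transferring the standard function $X \mapsto \beta_{l_k}(X) := \dim_{\Q_{l_k}} H^i_\et(\bar X, \Q_{l_k})$ identifies the internal integer $\dim_{\str{\Q_{l_k}}} \str{H}^i_\et(\str{X_K}, \str{\Q_{l_k}})$ with the ultraproduct class of the sequence $\beta_{l_k}(X_j)$; its coincidence with the standard integer $n_k$ means $\beta_{l_k}(X_j) = n_k$ for ultrafilter-almost-all $j$. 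Consequently $\beta_{l_1}(X_j) = \beta_{l_2}(X_j)$ for almost all $j$, contradicting the construction. The main obstacle is justifying the classical independence at the large field $K$; this is handled by a Lefschetz-principle descent of $X_K$ to a finitely generated subring of $K$ followed by an embedding into $\C$, relying on the invariance of étale cohomology of proper schemes under extensions of algebraically closed fields of characteristic $0$.
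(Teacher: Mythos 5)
Your overall architecture is the same as the paper's: argue by contradiction, use the failure of the statement to produce a counterexample ``at an infinite prime'', realize the relevant *scheme as the enlargement/$N$-image of a standard scheme over an internal algebraically closed field whose external characteristic is zero, apply Theorem \ref{ladicdimensionandN} to transport the Betti numbers, and contradict the known independence of $l$ in characteristic zero. The only real difference is the descent step: the paper transfers the negated statement directly to obtain an internal counterexample $\bar{X}\subset\str{\P}^n_{\bar{k}}$ of *degree $d\in\N$ and then invokes \cite{enlsch}[6.21] to write $\bar{X}=N(X')$ with $X'\in\SCHFP/\str{\bar{k}}$, whereas you package the standard counterexamples $X_j$ into a proper family $\cX|_U\rightarrow\spec{B}$ over a base of finite type over $\Z$ via a Hilbert scheme, and feed that family into Theorem \ref{ladicdimensionandN} at the internal point $x_\nu$. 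Both routes end in the same place; yours makes the descent explicit (and explicitly justifies the characteristic-zero input over the large field $K$, which the paper leaves implicit), at the price of extra pigeonholing and the implicit requirement that $\cR$, hence $\str{\cR}$, contains the relevant algebraically closed fields --- the same tacit assumption the paper makes. Your endgame could be streamlined: transfer of ``for all $j$ the two Betti numbers differ'' contradicts $n_1=n_2$ directly at the index $\nu$, without returning to almost-all standard indices.

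The step that does not hold as stated is the second pigeonhole. It is not true that closed subschemes of $\P^n$ of degree $d$ realize only finitely many Hilbert polynomials, and Mumford's regularity theorem cannot give this, since that bound is a function of the Hilbert polynomial, not of the degree alone: a line in $\P^2$ together with an embedded or disjoint zero-dimensional subscheme of length $m$ has degree $1$ but Hilbert polynomial $t+1+m$, so your sequence $X_j$ need not admit a subsequence with constant Hilbert polynomial, and then it cannot be fitted into a single universal family of finite type over $\Z$. The finiteness you need is a genuine classical boundedness statement for \emph{integral} subschemes of bounded degree (and dimension $\le n$), which is in fact exactly the scope of the tool the paper itself uses, since \cite{enlsch}[6.21] as quoted concerns prime cycles. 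So your argument is complete if $H(n,d,k)$ is read as integral closed subschemes of degree $d$, or if ``degree $d$'' is strengthened to a bound on the full Hilbert polynomial; for arbitrary closed subschemes of cycle degree $d$ this step needs repair (one may, e.g., pigeonhole instead on the finitely many possible cycles' supports after bounding the relevant data, or simply follow the paper's descent) --- noting that the paper's own proof, which applies \cite{enlsch}[6.21] to a not-necessarily-integral *subscheme, glosses over the same point.
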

\begin{proof}
Let us assume the statement is not true. Then by transfer there are an
infinite prime $P\in\str{\PP}\setminus\PP$, a *finite field $k$ of internal
characteristic $P$, a *scheme $X\hookrightarrow\str{\P}^n_k$ of *degree $d$
and two standard primes $l_1,l_2\in\PP$ such that
$$\dim_{\str{\Q}_{l_1}}\str{H}^i_\et(\bar{X},\str{\Q}_{l_1})\ne
\dim_{\Q_{l_2}}\str{H}^i_\et(\bar{X},\str{\Q}_{l_2}).$$
But $X$ is of *degree $d\in\N$ in $\str{\P}^n_k$, so that by
\cite{enlsch}[6.21], there is a $X'\in\SCHFP/\str{\bar{k}}$ with $N(X')=\bar{X}$.
Then by \ref{ladicdimensionandN} we have for all standard primes
$l\in\PP$
$$\dim_{\Q_l}H^i_\et(X',\Q_l)=\dim_{\str{\Q}_l}\str{H}^i_\et(\bar{X},\str{\Q}_l).$$
Therefore we have a contradiction to the independence of $l$ for fields
of characteristic zero.
\end{proof}

\vspace{\abstand}

\section{Voevodsky motives and cycles}

The aim of this section is to construct a functor $N$ for the motivic
cohomology of schemes. For that, we use the geometric construction of
the triangulated category of mixed motives by Voevodsky from
\cite{Voetriangl}. The advantage of this way is that we only have to deal with
finite correspondences and proper intersections.

For the convenience of the reader, we
shortly recall the construction of Voevodsky's triangulated category of
geometrical motives. For details we refer to \cite{Voetriangl}
and \cite{MaVoWe}.
After that, we discuss enlargements of these motives and
--- most importantly --- the existence of a functor $N$ for them.

\vspace{\abstand}

For a field $k$, we denote by $\Sm/k$ the category of smooth schemes of
finite type over $k$. For $X\in \SCHFP/k$, we denote by
$Z(X)$ the group of algebraic cycles.
Let $V\subseteq X$ be a closed subscheme.
Recall that $[V]$, the \emph{cycle associated to $V$},
is defined as
\[
  \sum_{x}\length{\cO_{X,x}}{\cO_{V,x}}\in Z(X),
\]
where the sum is taken over the generic points $x$ of the irreducible
components of $V$ and $\length{\cO_{X,x}}(\cdot)$ denotes length of an
$\cO_{X,x}$-module.

\vspace{\abstand}

For two schemes $X,Y\in \SCHFP/k$, we denote by $c(X,Y)$
the subgroup of $Z(X\times Y)$ generated by cycles of $X\times Y$ which are
finite and surjective over an irreducible component of $X$ via the
projection $X\times Y\rightarrow X$. Elements of $c(X,Y)$ are called
\emph{finite correspondences}.

\vspace{\abstand}

Now let $X,Y,Z\in \Sm/k$, $W_1\hookrightarrow X\times Y$ an irreducible
closed subscheme, finite and surjective over a component of $X$, and
$W_2\hookrightarrow Y\times Z$ an irreducible
closed subscheme, finite and surjective over a component of $Y$. One
point of using finite correspondences is that $W_1\times Z$ and
$X\times W_2$ intersect properly on $X\times Y \times Z$. So we can
define the intersection product $[W_1\times Z].[X\times W_2]$, and it is
a sum of prime cycles which are finite over $X$. Therefore one can
define $W_1\circ W_2:= p_{13*}([W_1\times Z].[X\times W_2])\in c(X,Z).$ In
particular, we do not have to work with rational equivalence, and we do
not have to use a moving lemma. This extends to a composition of
finite correspondences. With this composition we get an additive
category $SmCor(k)$ of smooth correspondences where the objects are
smooth schemes of finite type over $k$ and where the morphisms are
finite correspondences.

\vspace{\abstand}

The graph $\Gamma_f$ of a usual morphism $f:X\rightarrow Y$ gives us a
covariant functor $[-]:\Sm/k\rightarrow SmCor(k)$.
\vspace{\abstand}

Now we consider the homotopy category $\mathcal{K}^{b}(SmCor(k))$ of
bounded complexes in $SmCor(k)$, and we let $T$ be the smallest thick
subcategory of the triangulated category $\mathcal{K}^{b}(SmCor(k))$
which contains the following types of complexes:
\vspace{\abstand}
\begin{enumerate}
\item $[X\times\mathbb{A}^1]\xrightarrow{[pr_{1}]} [X]$ for all $X\in
  \Sm/k$
\item $[U\cap V]\xrightarrow{} [U]\oplus [V] \xrightarrow{} [X]$ for all
  \\ $X\in \Sm/k$ and Zariski open coverings $X=U\cup V$ of $X$ with the
  obvious morphisms.
\end{enumerate}
\vspace{\abstand}

Then let $DM^{eff}_{gm}(k)$ be the pseudo abelian hull of
$\mathcal{K}^{b}(SmCor(k))/T$.

It turns out that $DM^{eff}_{gm}(k)$ is again a triangulated category,
that the product $[X]\otimes [Y]=[X\times Y]$ for $X,Y\in \Sm/k$
defines a tensor triangulated structure on $DM^{eff}_{gm}(k)$ and that
we have a canonical functor
$$M_{gm}:\Sm/k\rightarrow DM^{eff}_{gm}(k).$$

We denote by $\Z$ the object $M_{gm}(\spec{K}$. It is given by
the complex
$$\cdots \rightarrow 0 \rightarrow [\spec{K}]\rightarrow 0 \rightarrow
0\cdots.$$

\vspace{\abstand}

We denote by $\Z/n$ the object in $DM_{gm}^{eff}$ which is
given by the complex
$$\cdots\rightarrow 0\rightarrow [\spec{K}] \xrightarrow{\cdot n}
[\spec{K} ]\rightarrow 0 \rightarrow \cdots$$
living in degree $-1$ and $0$. So we have the exact triangle
$$\Z\xrightarrow{\cdot n}\Z \rightarrow \Z/n
\rightarrow \Z[1]$$
in $DM_{gm}^{eff}$.

\vspace{\abstand}

The Tate object $\Z(1)\in DM^{eff}_{gm}(k)$ is defined to be the
image of the complex $[\mathbb{P}^1]\rightarrow [\spec{k}]$, where
$[\mathbb{P}^1]$ sits in degree 2. For an $n\in\mathbb{N}$ we define
$\Z(n):=\Z(1)^{\otimes n}$, and for an object $A\in
DM^{eff}_{gm}(k)$ we set $A(n):=A\otimes \Z(n)$.

Finally we get $DM_{gm}(k)$ by inverting $\Z(1)$, and it can be
shown that the tensor structure lifts from $DM^{eff}_{gm}(k)$ to
$DM_{gm}(k)$.

\vspace{\abstand}

For varying fields $k\in\cR$ we get for each construction step a fibration of
categories over the categories of fields in $\cR$ and if we choose an
appropriate superstructure we get for each internal field $K\in\str{\cR}$ internal
categories
$\str{\Sm/K}, \str{SmCor(K)}$,
$\str{\mathcal{K}^{b}(SmCor(K))/T}$,
$\str{DM^{eff}_{gm}(K)}$ and
$\str{DM_{gm}(K)}$.
For the tensor product in $\str{DM_{gm}(K)}$ we
write again $\otimes$ instead of $\str{\otimes}$, and we again have the Tate object
$\str{\Z}(1)\in\str{DM_{gm}(K)}$. For an
$n\in\str{\Z}$ we define $\str{\Z}(n):=\str{\Z}(1)^{\otimes n}$,
and for an object $A\in \str{DM_{gm}(K)}$ we set $A(n):=A\otimes \str{\Z}(n)$.
For each standard field $k$ we get a functor of $\otimes$-triangulated
categories
$$*:DM_{gm}(k)\rightarrow \str{DM_{gm}(\str{k})}.$$
with $\str{(\Z(1))}=\str{\Z}(1)$.

\vspace{\abstand}

For an internal field $K\in\cR$ we want to define a functor
$$ N:DM_{gm}(K)\rightarrow \str{DM_{gm}(K)}.$$

First we have functors
$$N:\Sm/K\rightarrow \str{\Sm}/K$$
and
$$N:\SCHFP/K\rightarrow \str{\SCHFP}/K$$
which are constructed and analysed in \cite{enlsch}[section 4].

\vspace{\abstand}

Since $N:\SCHFP/K\longrightarrow\str{\SCHFP}/K$ maps
prime cycles to *prime *cycles by \cite[6.4]{enlsch}, we get an induced
group homomorphism
\[
  \s{}:Z(X)\longrightarrow\str{Z}(\s{X}),\;\;\;
  \sum_{j=1}^n\alpha_j\cdot Z_j\mapsto\sum_{j=1}^n\alpha_j\cdot\s{Z_j}
\]
for each $X\in\SCHFP/K$.

\begin{satz}\label{sa:NcomWassCycles}
  $N$ commutes with taking the associated cycle,
  i.e.
  \[
    \s{[V]}=\str{[\s{V}]}\in\str{Z}^i(\s{X})
  \]
  for every closed subscheme $V$ of $X$ of codimension $i$.
\end{satz}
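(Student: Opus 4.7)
My plan is to use $\Z$-linearity of $\s{}$ to reduce to prime cycles, match the irreducible components of $V$ with the *irreducible *components of $\s{V}$ through a finite-type model and transfer, and then compare multiplicities by a local-ring computation.

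Decompose $V$ into its irreducible components $V_1,\ldots,V_n$ with generic points $\eta_1,\ldots,\eta_n$ and multiplicities $m_i=\length{\cO_{X,\eta_i}}{\cO_{V,\eta_i}}$, so that $[V]=\sum_{i=1}^n m_i[V_i]$. By $\Z$-linearity of $\s{}$ together with the hypothesis \cite[6.4]{enlsch} that $N$ sends prime cycles to *prime *cycles, we obtain $\s{[V]}=\sum_{i=1}^n m_i[\s{V_i}]$, where each $\s{V_i}$ is a *irreducible *closed *subscheme of $\s{X}$. The task therefore reduces to two points: (a) $\s{V_1},\ldots,\s{V_n}$ exhaust the *irreducible *components of $\s{V}$, with *generic *points $\s{\eta_1},\ldots,\s{\eta_n}$; (b) the *multiplicity $\str{\length{\cO_{\s{X},\s{\eta_i}}}{\cO_{\s{V},\s{\eta_i}}}}$ equals the standard integer $m_i$ for each~$i$.

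For (a), I pick by Lemma \ref{fundlemmaN}(ii) a subring $A_0\subset K$ of finite type over $\Z$ and a model $V_0\subset X_0$ in $\SCHFP/A_0$ with $V_0\otimes_{A_0}K=V$. After enlarging $A_0$ finitely inside $K$, the decomposition of $V$ descends to $V_0=V_{0,1}\cup\cdots\cup V_{0,n}$ with $V_{0,i}\otimes_{A_0}K=V_i$ (possible because each component is defined over some finitely generated subring). Applying $*$ to this standard-finite union of closed subschemes yields $\str{V_0}=\str{V_{0,1}}\cup\cdots\cup\str{V_{0,n}}$, and base-changing along $\str{A_0}\to K$, together with $\s{V}=\str{V_0}\otimes_{\str{A_0}}K$ from the construction of $N$, gives $\s{V}=\s{V_1}\cup\cdots\cup\s{V_n}$. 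Since each $\s{V_i}$ is *irreducible and the index set is standard finite, these are exactly the *irreducible *components of $\s{V}$; functoriality of $N$ on points identifies $\s{\eta_i}$ with the *generic *point of $\s{V_i}$.

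For (b)---the main obstacle of the proof---one must identify the *Artinian *local *ring $\cO_{\s{V},\s{\eta_i}}$ with something whose *length one can control. The idea is to exhibit $\cO_{\s{V},\s{\eta_i}}$ as a suitable internal localisation of $\str{\cO_{V_0,\eta_{0,i}}}\otimes_{\str{A_0}}K$, where $\eta_{0,i}\in V_0$ is the image of $\eta_i$. Using that $*$ preserves lengths of Artinian modules and that standard positive integers are fixed by $*$, one then reads off $\str{\length{\cO_{\s{X},\s{\eta_i}}}{\cO_{\s{V},\s{\eta_i}}}}=m_i$. The difficulty is that $\cO_{V,\eta_i}$ is in general not of finite type and not an object of $\cR$, so $*$ cannot be applied to it directly; one must argue through the finite-type model and control how localisation at a generic point interacts with the base change $\str{A_0}\to K$ and with the $*$-functor. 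This is precisely the kind of commutative-algebra compatibility that the paper has deferred to the appendix, and I would invoke those lemmas to close the argument, thereby obtaining $\str{[\s{V}]}=\sum_{i=1}^n m_i[\s{V_i}]=\s{[V]}$.
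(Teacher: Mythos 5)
Your overall strategy is the paper's: reduce to the irreducible components and their generic points, and compare the multiplicities by a length computation in the local ring at the generic point, using the appendix. Part (a) is fine in substance (the paper compresses it into a ``WLOG $V$ irreducible'' plus the citation of van den Dries that $\s{\p}=\p[\s{A}]$ is prime and is the generic point of $\s{V}$). The problem is part (b), which you yourself call the main obstacle and then do not carry out: ``I would invoke those lemmas to close the argument'' is not a proof, and the route you sketch for invoking them is not the one that works. You propose to localise first (forming $\cO_{V,\eta_i}$, resp.\ $\cO_{\s{V},\s{\eta_i}}$), then realise the latter as an internal localisation of $\str{\cO_{V_0,\eta_{0,i}}}\otimes_{\str{A_0}}K$ built from a model over a subring $A_0\subset K$ of finite type over $\Z$, and then use that ``$*$ preserves lengths of Artinian modules''. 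This runs into exactly the difficulties you name but do not resolve: $\cO_{V,\eta_i}$ is not a finite-type object and is an algebra over the \emph{internal} field $K$, so it is not in the domain of $*$; and if you descend to the model $V_0/A_0$, the multiplicity at $\eta_{0,i}$ need not be stable under the base change $A_0\to K$ without an additional spreading-out argument (the fibre of $\eta_{0,i}$ over $K$ can split or pick up nilpotents unless $A_0$ is enlarged with care), which your sketch does not address.

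The paper closes this step differently, and more simply: do not localise before applying $N$, and do not pass through a model over $\Z$ at all. Take $X=\spec{A}$ affine with $A$ a finite-type $K$-algebra, $V$ given by $\a\subseteq A$, and $\p$ a minimal prime over $\a$. Lemma \ref{lemmaLength} is stated precisely for this situation (a finitely generated algebra over the internal field $k=K$ and a finitely generated module, here $M=A/\a$): it asserts
\[
  \length{A_\p}{M_\p}=\slength{[\s{A}]_{\s{\p}}}{[\s{M}]_{\s{\p}}},
\]
i.e.\ $N$ is applied to the global finite-type data and the localisation is performed afterwards on both sides, so the objection about $\cO_{V,\eta_i}$ not being of finite type never arises. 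Its proof (the real content you are deferring) spreads a composition series of $M_\p$ out over a principal localisation $A_f$, $f\notin\p$, and applies the exact functor $N$ to get a *composition series, using $\s{\p}=\p[\s{A}]$; combined with van den Dries's result that $\s{\p}$ is the generic point of $\s{V}$, this gives $\str{[\s{V}]}=\s{[V]}$ component by component. As written, your proposal leaves the multiplicity comparison as a genuine gap, and the detour through the $\Z$-model that you sketch would need extra work that the paper's direct use of Lemma \ref{lemmaLength} avoids.
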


\begin{proof}
  Let $V$ be a closed subscheme of $X$ of codimension $i$.
  Without loss of generality, we can assume that $V$ is irreducible
  with generic point $x$.
  Furthermore, we can assume that $X=\spec{A}$ is affine,
  so that $V$ corresponds to an ideal $\a$ of $A$
  and $x$ corresponds to a prime ideal $\p$ which is the unique minimal
  prime ideal above $\a$.
  Then \ref{lemmaLength}, applied to $M:=A/\a$,
  shows
  \[
    \length{\cO_{X,x}}{\cO_{V,x}}=
    \length{A_\p}{A_\p/\a A_\p}=
    \slength{[\s{A}]_{\s{\p}}}{[\s{A}]_{\s{\p}}/\a[\s{A}]_{\s{\p}}}=
    \slength{\cO_{\s{X},x'}}{\cO_{\s{V}},x'},
  \]
  where $x'$, the point given by the *prime ideal $\s{\p}=\p[\s{A}]$,
  is the generic point of $\s{V}$ by \cite[2.7]{vandendries}.
  By definition of "associated cycle" this finishes the proof.
\end{proof}

\vspace{\abstand}

\begin{satz}\label{sa:NcomWpf}
$N$ commutes with push forward of cycles along proper morphisms.
\end{satz}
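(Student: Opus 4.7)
The plan is to reduce to a single prime cycle $\alpha=[V]\in Z(X)$ with $V\subseteq X$ an irreducible closed subscheme, since both $f_*$ and $N$ are $\Z$-linear on cycles. Applying Proposition~\ref{sa:NcomWassCycles} to both sides, the claim reduces to
\begin{equation*}
  N(f_*[V]) \;=\; N(f)_*\bigl(\str{[N(V)]}\bigr)\;\in\;\str{Z}(N(Y)).
\end{equation*}
Let $W\subseteq Y$ be the reduced scheme-theoretic image of $V$ under $f$; since $f$ is proper, $W$ is the closure of $f(v)$ for $v$ the generic point of $V$. The definition of push-forward gives $f_*[V]=0$ if $\dim V>\dim W$ and $f_*[V]=d\cdot[W]$ with $d=[k(V):k(W)]$ if $\dim V=\dim W$; by generic flatness, in the equidimensional case there is a dense open $U\subseteq W$ over which $f|_V$ restricts to a finite flat morphism of constant degree $d$.

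The next step is to descend the datum $(V\hookrightarrow X,\,W\hookrightarrow Y,\,f|_V,\,U)$ via Lemma~\ref{fundlemmaN}(ii) to a model over some finitely generated $\Z$-subalgebra $K_0\subseteq K$, obtaining $V_0\hookrightarrow X_0$, $W_0\hookrightarrow Y_0$, $f_0\colon V_0\to W_0$ and an open $U_0\subseteq W_0$ with the same geometric properties (irreducibility, properness of $f_0|_{V_0}$, equidimensionality, generic finite flatness of degree $d$, etc.), so that base changing along $K_0\hookrightarrow K$ recovers the original picture. Applying the enlargement $*$ to this finite-type datum and then base changing along the canonical internal ring homomorphism $\str{K_0}\to K$ yields, by the construction in Proposition~\ref{satzconstrN}, $N(V)=\str{V_0}\otimes_{\str{K_0}}K$, $N(W)=\str{W_0}\otimes_{\str{K_0}}K$, $N(f|_V)=\str{f_0}\otimes_{\str{K_0}}K$ and analogously for the open $U$.

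Now every geometric invariant at play (dimension, properness, formation of scheme-theoretic image, flatness, and the degree of a finite flat morphism) is preserved under transfer of the model and under subsequent flat base change. Hence $\str{\dim}N(V)=\dim V$, $\str{\dim}N(W)=\dim W$, $N(W)$ is the *scheme-theoretic *image of $N(V)$ under $N(f)$, and in the equidimensional case $N(f)|_{N(V)}\colon N(V)\to N(W)$ is *generically *finite *flat of *degree $d$. The *definition of *push-forward therefore yields
\begin{equation*}
  N(f)_*\bigl(\str{[N(V)]}\bigr)\;=\;
  \begin{cases}
    0 & \text{if }\dim V>\dim W,\\
    d\cdot\str{[N(W)]} & \text{if }\dim V=\dim W,
  \end{cases}
\end{equation*}
which matches $N(f_*[V])$ by a final application of Proposition~\ref{sa:NcomWassCycles} in the equidimensional case. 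The main obstacle is the compatibility of $N$ with the scheme-theoretic image and with the generic degree, since neither is explicit in the preceding material; both are handled by the descent to a finitely generated model (where they become standard EGA facts) and then by transfer, using that the generic degree is an integer and therefore enters a first-order statement that survives $*$ and the subsequent flat base change along $\str{K_0}\to K$.
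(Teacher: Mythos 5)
Your overall route coincides with the paper's up to the key reduction: linearity, the definition of $f_*[V]$ via $\dim$ and the degree $[\kappa(V):\kappa(f(V))]$, and the observation that everything comes down to $N$ preserving images, dimensions, integrality and this degree. The paper disposes of exactly these compatibilities by citing \cite{enlsch}[Theorem 6.4] and \cite{enlsch}[Lemma 6.27]; you instead try to re-derive them by descent to a model over $K_0$ plus transfer, and this is where there is a genuine gap. The bridge from statements about $\str{V_0}\to\str{W_0}$ over $\str{K_0}$ (which transfer does give) to statements about $N(V)\to N(W)$ over $K$ is the base change along the canonical internal homomorphism $\str{K_0}\to K$, and your justification that the relevant invariants ``survive the subsequent flat base change along $\str{K_0}\to K$'' fails twice. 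First, $\str{K_0}\to K$ is in general not flat and not even injective: e.g.\ for $K$ an ultraproduct of prime fields $\F_p$ and $K_0=\Z$ one has $K\cong\str{\Z}/P\str{\Z}$ for an infinite prime $P$, a non-flat quotient. Second, even flat base change does not preserve the properties you need: integrality (cf.\ $\C\otimes_{\R}\C$), density of an open subset, dimension of fibres, and formation of scheme-theoretic images are all unstable under general base change. Properness, finiteness and local freeness of rank $d$ do descend through any base change, but the assertions $\str{\dim}\,N(V)=\dim V$, $N(V)$ and $N(W)$ *integral, $N(U)$ *dense, and $N(W)$ the *image of $N(V)$ are precisely the nontrivial content here, and in your write-up they are asserted rather than proved.

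The argument can be repaired, but not by flatness: after shrinking $\spec{K_0}$ one arranges that $V_0,W_0\to\spec{K_0}$ have geometrically integral fibres of constant dimension, that $U_0$ is fibrewise dense, and that $f_0^{-1}(U_0)\to U_0$ is finite locally free of rank $d$; these fibrewise, geometric statements transfer, and $N(V)$, $N(W)$ are field base changes of *fibres of $\str{V_0}$, $\str{W_0}$, so they inherit *integrality, *dimension and the *density and degree statements, after which your computation of $N(f)_*[N(V)]$ goes through. This is essentially how the cited results of \cite{enlsch} are obtained; the shortest correct proof is the paper's, namely to invoke \cite{enlsch}[Theorem 6.4] (prime cycles go to *prime cycles, with control of dimension) and \cite{enlsch}[Lemma 6.27] (compatibility of $N$ with images and with the degree $[\kappa(V):\kappa(f(V))]$) directly, exactly as you already use Proposition \ref{sa:NcomWassCycles} for the associated cycles.
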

\begin{proof}
Let $f:X\rightarrow Y$ be a proper morphism with $X,Y\in \SCHFP/K$, and
let $W\subset X$ be a closed integral subscheme. By definition we have
$$f_*[W]=\left\{ \begin{array}{cl} [\kappa(W):\kappa(f(W))]\cdot [f(W)] &
  \text{if } \dim(W)=\dim(f(W)) \\
  0 & \text{otherwise,} \end{array}\right.$$
and the proposition follows from \cite{enlsch}[Lemma 6.27] and
\cite{enlsch}[Theorem 6.4].
\end{proof}

\vspace{\abstand}

Now for $X,Y\in \Sm/K$, let  $W\hookrightarrow X\times Y$
be a cycle which is finite and surjective over a connected component of
$X$. Then again by \cite{enlsch}[section 4],
$$N(W)\hookrightarrow N(X\times Y)=N(X)\times N(Y)$$
is a cycle, *finite and surjective over $N(X)$, i.e. we get a morphism
$$c(X,Y)\rightarrow \str{c}(N(X),N(Y)).$$
With that we get the following theorem:
\begin{satz}\label{sa:NforSmCor}
The above construction defines a functor
$$N:SmCor(K)\rightarrow\str{SmCor}(K).$$
\end{satz}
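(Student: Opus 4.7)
The plan is to verify the two conditions that make $N$ a functor: preservation of identities and preservation of composition. The assignment on objects is already given by $N\colon\Sm/K\to\str{\Sm}/K$, and the assignment on hom-groups $c(X,Y)\to\str{c}(N(X),N(Y))$ has been established in the paragraph preceding the theorem, so only these two compatibilities remain.

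For identities, the identity of $X$ in $SmCor(K)$ is the class $[\Delta_X]$ of the diagonal $\Delta_X\hookrightarrow X\times X$. Because $N$ is compatible with fiber products of schemes (see \cite{enlsch}, Section~4), we have $N(\Delta_X)=\str{\Delta}_{N(X)}$, which by transfer is the *identity of $N(X)$ in $\str{SmCor}(K)$.

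For composition, by $\Z$-linearity it suffices to treat a pair of prime correspondences $[W_1]\in c(X,Y)$ and $[W_2]\in c(Y,Z)$. Their composition in $SmCor(K)$ is
\[
  [W_2]\circ[W_1]=p_{XZ,*}\bigl([W_1\times Z]\cdot[X\times W_2]\bigr),
\]
the intersection being taken in the smooth ambient scheme $X\times Y\times Z$. I would split the verification into three compatibilities: (a) $N$ commutes with the formation of $W_1\times Z$ and $X\times W_2$, which reduces to compatibility of $N$ with products (\cite{enlsch}, Section~4); (b) $N$ commutes with the proper intersection product of these two cycles; (c) $N$ commutes with the proper push-forward $p_{XZ,*}$, which is precisely Proposition \ref{sa:NcomWpf}. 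Since the internal composition in $\str{SmCor}(K)$ is, by transfer, given by the same formulas, combining (a)--(c) will yield $N([W_2]\circ[W_1])=N([W_2])\circ N([W_1])$.

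The main obstacle is step (b). Since the two cycles meet properly in a smooth scheme, at each generic point $v$ of an irreducible component of the scheme-theoretic intersection the multiplicity is given by Serre's Tor formula
\[
  \sum_{j\geq 0}(-1)^j\length{\cO_v}{\mathrm{Tor}_j^{\cO_v}\bigl(\cO_{W_1\times Z,v},\cO_{X\times W_2,v}\bigr)}.
\]
Compatibility of $N$ with the associated-cycle construction is already Proposition \ref{sa:NcomWassCycles}, so what remains is to show that $N$ respects formation of Tor and of length of modules over the relevant local rings. Both reduce to the appendix lemmas (the one on length used in the proof of \ref{sa:NcomWassCycles} together with its Tor-analogue, obtained by transfer from the fact that $*$ is exact and compatible with tensor products of finitely presented modules). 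Once (b) is settled, assembling it with (a) and (c) completes the verification of functoriality.
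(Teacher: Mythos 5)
Your skeleton coincides with the paper's: functoriality is reduced, via Propositions \ref{sa:NcomWassCycles} and \ref{sa:NcomWpf}, to the single statement that $N$ commutes with the intersection product of two cycles meeting properly on a smooth scheme (your extra check of identities via $N(\Delta_X)=\Delta_{N(X)}$ is a harmless addition the paper leaves implicit). The one place you diverge is step (b): you compute the multiplicities by Serre's Tor formula and assert that $N$ respects Tor "by transfer from the fact that $*$ is exact and compatible with tensor products," whereas the paper reduces to the diagonal so that the multiplicities become Euler characteristics of an explicit Koszul complex, and then uses exactness of $N$ on modules (\cite{enlsch}, Theorem 6.4) together with Lemmas \ref{lemmaLength} and \ref{le:NcomWithKoszul}. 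These are the same mathematics, but your justification is loose at exactly the point the Koszul route is designed to handle: the relevant functor is $N$, not $*$, so transfer by itself gives nothing, and to see that $N$ commutes with the Tor's you must either descend a finite free resolution to a finitely generated model (using regularity of the local rings on the smooth ambient scheme) or, as the paper does, use the canonical Koszul resolution coming from reduction to the diagonal, for which compatibility with $N$ is immediate from $N$ commuting with tensor and alternating products (Lemma \ref{le:NcomWithKoszul}); the length comparison is then Lemma \ref{lemmaLength} in both treatments. With that point repaired, your argument is complete and essentially the paper's proof.
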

\begin{proof}
We have to show that the construction is compatible with composition
in the categories. By Proposition \ref{sa:NcomWassCycles} and
\ref{sa:NcomWpf} it is enough to show that $N$ commutes
with the intersection product of two cycles which intersect properly
on a smooth scheme. By reduction to the diagonal, the occurring multiplicities
are multiplicities of a Koszul complex. Because $N$ is exact on
modules (cf \cite{enlsch}[Theorem 6.4]), it is enough to show
the compatibility of $N$ with the Koszul complex and with the notion of
length. But this is done in lemma \ref{lemmaLength} and lemma \ref{le:NcomWithKoszul}.
\end{proof}

\vspace{\abstand}

The functor $N:SmCor(K)\rightarrow\str{SmCor}(K)$ induces a functor
$\mathcal{K}^b(SmCor(K)) \rightarrow \mathcal{K}^b(\str{SmCor})(K)$. We
compose this with the canonical functor
$$\mathcal{K}^b(\str{SmCor}(K))\rightarrow
\str{\mathcal{K}}^b(\str{SmCor}(K)$$
which was studied in \cite{enlcat}[Section 6] to get the functor
$$N:\mathcal{K}^b(SmCor(K)) \rightarrow
\str{\mathcal{K}}^b(\str{SmCor})(K).$$
\vspace{\abstand}

Again by \cite{enlsch}[Section 4] we have
\begin{itemize}
\item $N([X\times\mathbb{A}^1]\xrightarrow{[pr_{1}]} [X])=
[N(X)\times\str{\mathbb{A}}^1]\xrightarrow{[pr_{1}]} [N(X)]$ for all $X\in
  \Sm/K$
\item $N([U\cap V])\xrightarrow{} [U]\oplus [V] \xrightarrow{}
  [X])= [N(U)\cap N(V)]\xrightarrow{} [N(U)]\oplus [N(V)] \xrightarrow{} [N(X)]$ for all $X\in
  \Sm/k$,
\end{itemize}
and if $U\cup V=X$ is an open covering of $X$, then $N(U)\cup
N(V)=N(X)$ is an open covering of $N(X)$. Hence we have $N(\mathcal{T})\subset \str{\mathcal{T}}$. Therefore we get a
  functor
$$N:\mathcal{K}^b(SmCor(K))/\mathcal{T} \rightarrow
\str{\mathcal{K}}^b(\str{SmCor})(K)/\str{\mathcal{T}}.$$
\vspace{\abstand}
By the universal property of the pseudo abelian hull this further
induces a functor
$$N:DM_{gm}^{eff}(K)\rightarrow \str{DM}_{gm}^{eff}(K)$$
and then again by a universal property a functor
$$N:DM_{gm}(K)\rightarrow \str{DM}_{gm}(K).$$
\vspace{\abstand}
Furthermore, by \cite{enlsch}[Section 4] we have
$$N(X\times Y)=N(X)\times N(Y)$$
and
$$N([\mathbb{P}^1_K]\rightarrow
[\spec{K}])=[\str{\mathbb{P}}^1_K]\rightarrow [\str{\spec{K}}],$$
and therefore $N:DM_{gm}(K)\rightarrow \str{DM}_{gm}(K)$ is compatible
with the tensor structure on both sides, and we have
$N(\Z(n))=\str{\Z}(n)$.
\vspace{\abstand}
To summarise this we formulate the next
\begin{satz}\label{sa:NforDM}
Let $K$ be an internal field. The functor $N: SmCor(K)\rightarrow
\str{SmCor(K)}$ of Proposition \ref{sa:NforSmCor} induces a natural functor of
tensor triangulated categories
$$N:DM_{gm}(K)\rightarrow \str{DM}_{gm}(K),$$
and the diagram
$$
\xymatrix{
\Sm/K \ar[r]^{N}\ar[d]^{M} & \str{\Sm}/K \ar[d]^{\str{M}} \\
DM_{gm}(K) \ar[r]^{N}     & \str{DM}_{gm}(K)
}
$$
is commutative.
\end{satz}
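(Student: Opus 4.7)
The plan is to assemble the functor step by step, matching the stepwise construction of $DM_{gm}(K)$ and invoking at each stage the appropriate universal property together with compatibility properties of $N$ already established for schemes in \cite{enlsch} and, in Proposition \ref{sa:NforSmCor}, for finite correspondences. First, starting from $N\colon SmCor(K)\rightarrow \str{SmCor}(K)$, I would apply $\mathcal{K}^b(-)$ levelwise to obtain a triangulated functor $\mathcal{K}^b(SmCor(K))\rightarrow\mathcal{K}^b(\str{SmCor}(K))$, and then compose with the canonical comparison functor $\mathcal{K}^b(\str{SmCor}(K))\rightarrow\str{\mathcal{K}}^b(\str{SmCor}(K))$ provided by \cite{enlcat}, so as to land in the internal homotopy category.

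The next step is to show that the thick subcategory $\mathcal{T}$ is mapped into $\str{\mathcal{T}}$. Since $\mathcal{T}$ is thickly generated by the homotopy invariance complexes and the Mayer--Vietoris complexes, it suffices to check these two generators. For the first I would use that $N([X\times\A^1]\xrightarrow{[pr_1]}[X])=[N(X)\times\str{\A}^1]\xrightarrow{[pr_1]}[N(X)]$, which lies in $\str{\mathcal{T}}$ by transfer. For the second I would use that $N$ commutes with fibre products and with open covers, so that if $X=U\cup V$ is a Zariski cover then $N(X)=N(U)\cup N(V)$ is a *Zariski cover; both facts are recorded in \cite{enlsch}. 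Hence $N$ descends to the Verdier quotient, and by the universal property of the pseudo-abelian hull further to a functor $N\colon DM_{gm}^{eff}(K)\rightarrow \str{DM}_{gm}^{eff}(K)$.

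To pass to $DM_{gm}(K)$ one inverts the Tate object. Since by construction $N([\P^1_K]\rightarrow[\spec{K}])=[\str{\P}^1_K]\rightarrow[\str{\spec{K}}]$, one obtains $N(\Z(1))=\str{\Z}(1)$, and the universal property of localisation at $\Z(1)$ extends $N$ to $N\colon DM_{gm}(K)\rightarrow\str{DM}_{gm}(K)$. The tensor compatibility follows from $N(X\times Y)=N(X)\times N(Y)$ on smooth schemes, which extends levelwise to the tensor product of complexes and survives both taking the pseudo-abelian hull and inverting $\Z(1)$. Commutativity of the square with $M_{gm}$ reduces, by construction of both functors through the embedding $[-]\colon\Sm/K\to SmCor(K)$, to the tautological fact that $N$ sends the graph of a morphism to the *graph of its $N$-image.

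The main obstacle is the initial passage from the \emph{standard} homotopy category of bounded complexes to the \emph{internal} one: one needs the comparison functor $\mathcal{K}^b(\str{SmCor}(K))\rightarrow \str{\mathcal{K}}^b(\str{SmCor}(K))$ to respect the triangulated and tensor structures tightly enough so that the subsequent universal-property arguments apply. Once the substance of \cite{enlcat} is in place at this step, the remaining verifications — descent through $\mathcal{T}$, pseudo-abelian completion, and Tate inversion — are formal.
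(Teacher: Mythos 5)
Your proposal follows essentially the same route as the paper: the paper constructs the functor in exactly these stages (levelwise $\mathcal{K}^b$, composition with the comparison functor from \cite{enlcat}[Section 6], the check $N(\mathcal{T})\subset\str{\mathcal{T}}$ on the two types of generating complexes using the facts from \cite{enlsch}[Section 4], descent to the Verdier quotient, the universal properties of the pseudo-abelian hull and of Tate inversion, and tensor compatibility via $N(X\times Y)=N(X)\times N(Y)$ and $N([\P^1_K]\rightarrow[\spec{K}])=[\str{\P}^1_K]\rightarrow[\str{\spec{K}}]$), and the stated proposition merely summarises that construction. Your explicit remark that commutativity of the square with $M_{gm}$ reduces to $N$ sending graphs to *graphs is a correct articulation of what the paper leaves implicit.
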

\vspace{\abstand}

Now we want to show how this functor $N$ induces a morphism for
\emph{motivic cohomology}. First we recall the definition of motivic cohomology in terms of
Voevodsky's triangulated category of motives.

\vspace{\abstand}

Let $X$ be a smooth scheme of finite type over a field $k$.

\begin{defi}[motivic cohomology]
The
motivic cohomology of $X$ is defined as
$$H^i_{\mathcal{M}}(X,\Z(j)):=\Hom{DM_{gm}(k)}{M_{gm}(X)}{\Z(j)[i]}.$$
The $\otimes$-triangulated structure and the pullback along the
diagonal define on $\oplus_{i,j\in\mathbb{N}}
H^i_{\mathcal{M}}(X,\Z(j))$ a graded ring structure.
In the same way we define motivic cohomology with finite coefficients
$$H^i_{\mathcal{M}}(X,\Z/n(j)):=\Hom{DM_{gm}(k)}{M_{gm}(X)}{\Z/n(j)[i]}.$$
\end{defi}
\vspace{\abstand}

For an internal field $K$ and an $X\in\str{\Sm}/K$ we define
\begin{defi}
The *motivic cohomology of $X$ is defined as
$$\str{H}^i_{\mathcal{M}}(X,\str{\Z}(j))
:=\Hom{\str{DM}_{gm}(k)}{\str{M}_{gm}(X)}{\str{\Z}(j)[i]} $$
resp.
$$\str{H}^i_{\mathcal{M}}(X,\str{\Z/n}(j))
:=\Hom{\str{DM}_{gm}(k)}{\str{M}_{gm}(X)}{\str{\Z/n}(j)[i]} $$

for $i,j,n\in\Ns$. In the same way as above
$$\str{\bigoplus_{i,j\in\Ns}}
\str{H}^i_{\mathcal{M}}(X,\str{\Z}(j))$$
is a *graded ring
\end{defi}

\vspace{\abstand}

\begin{bem}
An alternative way to define *motivic cohomology is the following: For
each field $k$, motivic cohomology is a functor
$$H^{\cdot}(-,\Z(\cdot)):(\Sm/k)^{op}\rightarrow \text{graded
  rings}.$$
By transfer, for details see \cite{enlcat}, we get for each internal
field $K$ an (internal) functor
$$\str{H}^{\cdot}(-,\Z(\cdot)):(\str{\Sm}/K)^{op}\rightarrow
\text{(internal) graded
  rings}.$$
Obviously this agrees with the above definition.
\end{bem}

\vspace{\abstand}

\begin{bem}\label{rem:motcohandhighchow}
For a smooth equidimensional scheme $X$ we have by \cite{Voemocohchgr}
the following identification
$$ H^i_{\mathcal{M}}(X,\Z(j))=CH^j(X,2j-i),$$
where $CH^*(X,*)$ denotes the higher Chow groups of Bloch. In
particular we have
$$H^{2i}_{\mathcal{M}}(X,\Z(i))=CH^i(X),$$
where $CH^i(X)$ denotes the usual Chow groups, i.e. cycles of codimension $i$ on $X$
modulo rational equivalence.
\end{bem}

\vspace{\abstand}

Now let $K$ be an internal field.

\begin{satzdefi}\label{de:NfMoCoh}
The functor $N:DM_{gm}\rightarrow \str{DM}_{gm}(K)$ of Proposition
\ref{sa:NforDM} induces for all $i,j\in\mathbb{N}_0$ and all $X\in
\Sm/K$ a natural morphism
$$N:H^i_{\mathcal{M}}(X,\Z(j))\rightarrow
\str{H}^i_{\mathcal{M}}(N(X), \str{\Z}(j))$$

and then a morphism of graded rings
$$N:\bigoplus_{i,j\in\mathbb{N}_0} H^i_{\mathcal{M}}(X,\Z(j))\rightarrow
\str{\bigoplus}_{i,j\in\str{\mathbb{N}_0}}\str{H}^i_{\mathcal{M}}(N(X), \str{\Z}(j)).
$$
\end{satzdefi}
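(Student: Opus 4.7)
The construction of the morphism is essentially forced by the structural properties of $N$ already established. The plan is to use the fact that $N:DM_{gm}(K)\rightarrow\str{DM}_{gm}(K)$ is a functor of tensor triangulated categories satisfying $N\circ M_{gm}=\str{M}_{gm}\circ N$ (on $\Sm/K$) and $N(\Z(1))=\str{\Z}(1)$, as recorded in Proposition~\ref{sa:NforDM}. Then for any $X\in\Sm/K$ and $i,j\in\N_0$, the functor $N$ applied to morphisms yields
\[
\Hom{DM_{gm}(K)}{M_{gm}(X)}{\Z(j)[i]}\longrightarrow
\Hom{\str{DM}_{gm}(K)}{N(M_{gm}(X))}{N(\Z(j)[i])}.
\]
The first step is to rewrite the right hand side by identifying $N(M_{gm}(X))=\str{M}_{gm}(N(X))$, and, using that $N$ is a triangulated tensor functor with $N(\Z(1))=\str{\Z}(1)$, $N(\Z(j)[i])=\str{\Z}(j)[i]$. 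This delivers the desired map $N:H^i_{\mathcal{M}}(X,\Z(j))\rightarrow\str{H}^i_{\mathcal{M}}(N(X),\str{\Z}(j))$ and its naturality in $X$ follows because all identifications used are natural.

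Next I would check compatibility with the graded ring structure. The product on motivic cohomology is given, for $f:M_{gm}(X)\rightarrow \Z(j)[i]$ and $g:M_{gm}(X)\rightarrow \Z(j')[i']$, by precomposing
\[
M_{gm}(X)\xrightarrow{M_{gm}(\Delta_X)}M_{gm}(X\times X)\cong M_{gm}(X)\otimes M_{gm}(X)\xrightarrow{f\otimes g}\Z(j+j')[i+i']
\]
with the diagonal, using the canonical isomorphism $\Z(j)[i]\otimes\Z(j')[i']\cong\Z(j+j')[i+i']$. Since $N$ is a tensor functor we have $N(f\otimes g)=N(f)\otimes N(g)$, and since $N$ commutes with products of schemes and with the Yoneda-like graph construction (cf.\ \cite{enlsch}[Section 4]), one has $N(M_{gm}(\Delta_X))=\str{M}_{gm}(\Delta_{N(X)})$ under the identifications above. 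Combining these facts, the image of the product is the product of the images, which gives the ring homomorphism statement. The unit $1\in H^0_{\mathcal{M}}(X,\Z(0))$ corresponds to the identity $M_{gm}(X)\rightarrow M_{gm}(X)=\Z(0)[0]\otimes M_{gm}(X)$ composed with the structure map, and is preserved because $N$ preserves identities and the unit object $\Z=\Z(0)$.

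The only point that needs slight care, and the one I would expect to be the main technical obstacle, is the verification that the identification $N(\Z(j)[i])\cong\str{\Z}(j)[i]$ used above is the \emph{canonical} one, compatible with the canonical isomorphism $\Z(j)[i]\otimes\Z(j')[i']\cong\Z(j+j')[i+i']$ on both sides. This follows from the monoidal and triangulated structure of $N$: $N$ sends the symmetric monoidal coherence and shift isomorphisms of $DM_{gm}(K)$ to those of $\str{DM}_{gm}(K)$ by Proposition~\ref{sa:NforDM}, and on the Tate object $N(\Z(n))=N(\Z(1))^{\otimes n}=\str{\Z}(1)^{\otimes n}=\str{\Z}(n)$ by construction. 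Granted this coherence, both the additive structure on each $H^i_{\mathcal{M}}(X,\Z(j))$ (from the additive structure of the hom-sets, preserved by any additive functor) and the multiplicative structure carry over, giving the claimed graded ring homomorphism. The same argument, applied with $\Z/n$ in place of $\Z$, yields the corresponding map on motivic cohomology with finite coefficients.
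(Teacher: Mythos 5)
Your proposal is correct and takes essentially the same route the paper intends: the paper states this Proposition/Definition without a separate proof, treating it as an immediate consequence of Proposition \ref{sa:NforDM} (that $N$ is a functor of tensor triangulated categories with $N\circ M_{gm}=\str{M}_{gm}\circ N$ on $\Sm/K$ and $N(\Z(n))=\str{\Z}(n)$), which is exactly the argument you spell out by applying $N$ to the hom-groups and using the diagonal and tensor compatibility for the ring structure. Your extra care about the coherence of the identification $N(\Z(j)[i])\cong\str{\Z}(j)[i]$ merely makes explicit what the paper leaves implicit.
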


\vspace{\abstand}

\begin{bem}
With remark \ref{rem:motcohandhighchow} we see that we get a morphism
$$N:CH^i(X)\rightarrow \str{CH}^i(N(X))$$,
and by the construction it is easy to see that for a prime cycle
$[Y]$, we have
$$N([Y])=[N(Y)].$$
\end{bem}

\vspace{\abstand}

For a smooth scheme $X$ over a field $K$ and an $n\in\N$, prime to
$\ch{K}$, there is the cycle class map
$$cl_n: CH^i(X)\rightarrow H^{2i}_\et(X,\mu_n^{\otimes i}).$$

By transfer, for a *smooth schemes $X$ over an internal field $K$ and
an $n\in\str{\N}$, *prime to $\sch{K}$. we
have the induced map
$$\str{cl}_n: \str{CH}^i(X)\rightarrow
\str{H}^{2i}_\et(X,\str{\mu}_n^{\otimes i}).$$
These are compatible with $N$:

\vspace{\abstand}

\begin{satz}
Let $K$ be an internal field, $X$ be a smooth scheme of finite type
over $K$, and $n\in\N$ prime to $\ch{K}$. Then the diagram
$$
\xymatrix@C=20mm{
  CH^i(X) \ar[d]_{cl} \ar[r]^{N} & \str{CH}^i(N(X))\ar[d]^{\str{cl}}\\
  H^{2i}_\et(X,\mu_n^{\otimes i}) \ar[r]_{N} &
                               \str{H}^{2i}(N(X),\str{\mu}_n^{\otimes
                                 i})
}
$$
is commutative.
\end{satz}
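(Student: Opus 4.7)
The plan is to reduce the verification to a single integral closed subscheme $Y\subset X$ of codimension $i$ and then to compare the two compositions by passing to a model over a finitely generated subring of $K$ and applying transfer. Since $CH^i(X)$ is generated by such classes $[Y]$ and all four maps of the square are (internal, in the lower row) group homomorphisms, it suffices to trace one such class through the diagram.

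First I would use Lemma \ref{fundlemmaN}(ii) to choose a finitely generated $\Z$-subring $A_0\subset K$, a smooth scheme $X_0\in\SCHFP/A_0$ and an integral closed subscheme $Y_0\subset X_0$ of codimension $i$, together with isomorphisms $X_0\otimes_{A_0}K\cong X$ and $Y_0\otimes_{A_0}K\cong Y$; after enlarging $A_0$ if necessary, one may also assume that $n$ is invertible in $A_0$. By the construction of $N$ for schemes (Proposition \ref{satzconstrN}) and for cycles, we have $N(X)\cong\str{X_0}\otimes_{\str{A_0}}K$, $N(Y)\cong\str{Y_0}\otimes_{\str{A_0}}K$ and $N([Y])=\tilde{\pi}^*\str{[Y_0]}$, where $\tilde{\pi}\colon N(X)\to\str{X_0}$ is the canonical projection. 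By Remark \ref{handybasechangehom} the \'etale-cohomology map $N$ sends $cl_n([Y])$ to the image of $cl_n([Y_0])$ under the composition
\[
H^{2i}_\et(X_0,\mu_n^{\otimes i})\xrightarrow{\ast}\str{H}^{2i}_\et(\str{X_0},\str{\mu}_n^{\otimes i})\xrightarrow{\tilde{\pi}^*}\str{H}^{2i}_\et(N(X),\str{\mu}_n^{\otimes i}).
\]

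For the other composition, since the cycle class map is a natural transformation between functors on $\Sm/k$ defined uniformly for all fields $k\in\cR$, transfer yields an internal cycle class map $\str{cl}_n$ which on $\str{X_0}$ agrees with $\str{(cl_n(-))}$; in particular $\str{cl}_n(\str{[Y_0]})=\str{(cl_n([Y_0]))}\in\str{H}^{2i}_\et(\str{X_0},\str{\mu}_n^{\otimes i})$. The identity I need is then $\str{cl}_n([N(Y)])=\tilde{\pi}^*\str{cl}_n(\str{[Y_0]})$, i.e.\ compatibility of $\str{cl}_n$ with the pullback $\tilde{\pi}$; once this is established, both paths around the square equal $\tilde{\pi}^*\str{(cl_n([Y_0]))}$ and the diagram commutes.

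The hard part is precisely this last compatibility: the morphism $\tilde{\pi}$ is flat, but $K$ is usually not internally of finite presentation over $\str{A_0}$, so the naturality of $\str{cl}_n$ under pullback is not a direct application of transfer. My plan is to handle it by writing $K$ internally as the filtered colimit of its *finitely generated $\str{A_0}$-subalgebras $C$ (an internal version of Lemma \ref{fundlemmaN}(i)): for each such $C$ the naturality of $\str{cl}_n$ under the pullback to $\str{X_0}\otimes_{\str{A_0}}C$ is a transferred standard statement, and passage to the colimit in $*$\'etale cohomology, exactly as in Remark \ref{handybasechangehom}, yields the required identity on $N(X)$, finishing the commutativity check.
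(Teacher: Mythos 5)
Your reduction to prime cycles and the spreading-out over $A_0$ are fine as far as they go, but the core of your argument does not typecheck: the cycle class map $cl_n$ (and, by transfer, $\str{cl}_n$) is only defined for smooth schemes over a (internal) field, whereas the classes you manipulate, $cl_n([Y_0])\in H^{2i}_\et(X_0,\mu_n^{\otimes i})$ and $\str{cl}_n(\str{[Y_0]})$ on $\str{X_0}$, live over the non-field bases $A_0$ and $\str{A_0}$ (and the same problem recurs for every *finitely generated $\str{A_0}$-subalgebra $C$ in your colimit step). To make sense of them you would first have to construct a cycle class theory over such arithmetic bases (Gysin classes/purity over a regular base) and prove its compatibility with base change -- a substantial input that your proposal neither cites nor supplies, and which is precisely where the difficulty of the statement sits. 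Moreover, the assertion that $\tilde{\pi}\colon N(X)=\str{X_0}\otimes_{\str{A_0}}K\to\str{X_0}$ is flat is false in general: the canonical internal map $\str{A_0}\to K$ can have a nontrivial kernel (already for $A_0=\Z$ and $K$ of infinite internal characteristic $P$, the kernel is $P\str{\Z}$), so internally you are specializing to a non-generic fiber of $\str{X_0}/\str{A_0}$. Hence the identity you call ``naturality of $\str{cl}_n$ under the pullback $\tilde{\pi}$'' is really a compatibility of cycle classes with specialization, which is not a formal consequence of transfer or of the limit description of cohomology in Remark \ref{handybasechangehom} (that remark only handles the cohomology groups, not the cycle class construction).

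For comparison, the paper avoids any cycle class over a non-field base: it first treats $i=1$, where $cl_n$ is the composite $CH^1\cong\mathrm{Pic}\cong H^1_\et(-,\G_m)\xrightarrow{\delta}H^2_\et(-,\mu_n)$ coming from the Kummer sequence, so commutativity follows from Proposition \ref{h1undpicundN}; then it uses the compatibility of $N$ with intersection products (Proposition \ref{sa:NforDM}) to get the case of products of divisors, and finally invokes the cohomological dévissage of \cite{SGA4.5}[Cycle, 2.2] to reduce arbitrary cycle classes to that case. If you want to salvage your route, you would have to import the relative cycle class formalism and its specialization compatibilities from SGA 4$\frac{1}{2}$ over the base $A_0$, at which point you are using at least as much machinery as the paper does.
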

\begin{proof}
For $i=1$ the map $cl_n$ is defined as follows. One first identifies
$$CH^1(X)\xrightarrow{\sim} Pic(X)\xrightarrow{\sim} H^1_\et(X,\G_m)$$
and then uses the connection homomorphism
$$H^1_\et(X,\G_m)\xrightarrow{\delta} H^2_\et(X,\mu_n)$$
of the short exact sequence
$$1\rightarrow \mu_n\rightarrow
\G_m\xrightarrow{\cdot^n}\G_m\rightarrow 1.$$
The same is true for $\str{cl}_n$ and therefore in this case the claim
follows from Proposition \ref{h1undpicundN}. By the compatibility of $N$ with the intersection
product \ref{sa:NforDM} the diagram is commutative for cycles which are
products of divisors. Then the cohomological methods of
\cite{SGA4.5}[Cycles,sect. 2.2] reduce the general case to this case.
\end{proof}
%%%%%%%%%%%%%%%%%%%%%%%%%%% An application%%%%%%%%%%%%%%%%%%%%%%%
\vspace{\abstand}
\section{Complexity of cycles}

In this section we give a notion of complexity of a cycle and show how
that can be used to describe the image of $N:CH^i(X)\rightarrow
\str{CH}^i(N(X)).$ We show that for divisors, the image of $N$ can be
describe much easier, and that in this case $N$ is injective.
Then we show that Mumford's result, that Chow groups
are not finite dimensional, implies that in general the morphism
$N:CH^i(X)\rightarrow\str{CH}^i(N(X))$ is not injective.
After that we show that our notion of complexity behaves well under
the intersection product of cycles, and we show that rational
equivalence behaves somehow bad under this notion of complexity.

\vspace{\abstand}

Let $K$ be a field and $X\hookrightarrow\P_K^n$ be a closed
immersion. Then we define a notion of \emph{complexity} on the Chow ring of $X$ as follows:

\vspace{\abstand}

\begin{defi}
An element $x\in CH^i(X)$ has \emph{complexity less than $c\in\N$} if we can
write $x$ as
$$x=\sum_{i=1}^{n} \alpha_i [X_i]$$
where $|\alpha_i|<c$, $n<c$ and the $X_i\hookrightarrow X$ are integral subschemes
of degree $<c$. We define the notion of \emph{*complexity less than $c\in\Ns$} for
*cycles on *projective varieties in the obvious analogue way.
\end{defi}

\vspace{\abstand}

The next lemma shows that this notion of complexity is quite natural,
if we want to understand the image of $N$ for Chow groups.

\vspace{\abstand}

\begin{lemma}\label{imageN}
Let $K$ be an internal field and $X\hookrightarrow\P^n_K$ a closed
immersion. Then a cycle $x'\in\str{CH}^i(N(X))$ is in the image of
the morphism
$$N:CH^i(X)\rightarrow \str{CH}^i(N(X))$$
if and only if the *complexity of $x'$ is less than $d$ for a
$d\in\N\subset\Ns$.
\end{lemma}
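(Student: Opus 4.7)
The plan is to prove both implications, with the forward direction being essentially a direct unwinding of definitions and the reverse direction resting on \cite{enlsch}[6.21] (the same result invoked in the proof of Theorem \ref{indepofl}), which lets us recover a standard subscheme from a *subscheme of standard Hilbert data.

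For the forward direction, suppose $x' = N(x)$ with $x \in CH^i(X)$. Pick a representative $x = \sum_{j=1}^{n} \alpha_j [X_j]$ with $X_j \hookrightarrow X$ integral and choose $c \in \N$ larger than $n$, than $\max_j |\alpha_j|$, and than $\max_j \deg(X_j)$. By the remark following Proposition/Definition \ref{de:NfMoCoh} we have $N([X_j]) = [N(X_j)]$, so $x' = \sum_{j=1}^{n} \alpha_j [N(X_j)]$. Each $N(X_j) \hookrightarrow N(X) \hookrightarrow \str{\P}^n_K$ is *integral (since $N$ preserves integrality on finite presentation subschemes) and has *degree equal to the standard integer $\deg(X_j)$, because $N$ preserves the defining Hilbert polynomial of a model. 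Hence the *complexity of $x'$ is less than $c \in \N$.

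For the reverse direction, assume $x' = \sum_{j=1}^{m} \alpha_j [Y_j] \in \str{CH}^i(N(X))$ is a *-expression with $m, |\alpha_j|, \str{\deg}(Y_j) < d$ for some standard $d$. Since these quantities are integers bounded by a standard integer, $m$ and each $\alpha_j$ are themselves standard. For each $j$, the inclusion $Y_j \hookrightarrow N(X) \hookrightarrow \str{\P}^n_K$ exhibits $Y_j$ as a *closed *subscheme of $\str{\P}^n_K$ of standard *degree, so by \cite{enlsch}[6.21] there exists a closed subscheme $Z_j \hookrightarrow \P^n_K$ with $N(Z_j) \cong Y_j$ compatibly with the embedding into $\str{\P}^n_K$. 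The containment $Y_j \subset N(X)$ then translates (via the essentially injective description of standard closed subschemes of $\P^n_K$ by their image under $N$) into $Z_j \subset X$, and the *integrality of $Y_j = N(Z_j)$ forces integrality of $Z_j$. Setting $x := \sum_{j=1}^{m} \alpha_j [Z_j] \in CH^i(X)$, Proposition \ref{de:NfMoCoh} together with $N([Z_j]) = [N(Z_j)] = [Y_j]$ yields $N(x) = x'$.

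The main obstacle is the step asserting that $Y_j \subset N(X)$ forces the reconstructed $Z_j$ to lie in $X$, rather than merely in $\P^n_K$. I expect this to reduce to the uniqueness part of \cite{enlsch}[6.21]: both $Z_j$ and its image under the composite $Z_j \to \P^n_K$ sit in a diagram over $\str{\P}^n_K$ whose $N$-image is the fixed closed immersion $Y_j \hookrightarrow N(X)$, so by faithfulness of $N$ on the standard Hilbert-polynomial-bounded subscheme functor, the factorization through $X$ descends. The remaining verifications (linearity of $N$ on cycles, preservation of integrality and degree) are straightforward given the results of Section 3 and \cite{enlsch}[Section 6].
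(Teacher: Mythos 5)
Your proof is correct and follows essentially the same route as the paper: degree preservation under $N$ for the forward direction, and recovery of standard integral subschemes from *prime cycles of standard *degree via \cite{enlsch}[6.21] (together with standardness of hyperintegers bounded by a standard integer) for the converse. The containment step $Z_j\subseteq X$ that you flag as the main obstacle does not arise in the paper, because its Corollary 6.21 is invoked in the form relative to $X$ itself --- a prime cycle $Y\hookrightarrow N(X)$ of standard *degree is $N(X_j)$ for some $X_j\hookrightarrow X$ --- so your extra reduction via faithfulness of $N$ on closed subschemes, while fillable, is unnecessary.
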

\begin{proof}
An element $x=\sum \alpha_i[X_i]$ is mapped to $N(x)=\sum \alpha_i
[N(X_i)]$. By \cite{enlsch}[Cor. 6.18] the *degree of $N(X_i)$ is
equal to the degree of $X_i$. Furthermore, by \cite{enlsch}[Corollary 6.21], a prime cycle
$Y_i\hookrightarrow N(X)$ is of the form $N(X_i)$ for an
$X_i\hookrightarrow X$ if and only if its *degree is in $\N\subset\str{\N}$.
\end{proof}

\vspace{\abstand}

For cycles of codimension one the situation is much easier, and we can simply use the Hilbert polynomial
instead of the notion of complexity.

\begin{thm}
Let $K$ be an internal field, $X$ a projective $K$-scheme with
integral geometric fiber and $\phi\in\Q[t]$ a rational
polynomial. Then the morphism
$$Pic^{\phi}(X)\rightarrow \str{Pic}^{\phi}(N(X))$$
is bijective.
\end{thm}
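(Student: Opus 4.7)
The plan is to realise both $\mathrm{Pic}^{\phi}(X)$ and $\str{\mathrm{Pic}^{\phi}(N(X))}$ as sets of $K$-valued (respectively internal $\str{\spec{K}}$-valued) points of one and the same projective $K$-scheme of finite presentation, and then to apply the bijection of \cite{enlsch}[Theorem 4.13] for *Artinian points.

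First I would spread out. By Lemma \ref{fundlemmaN}(ii) together with the standard descent of projectivity, flatness and geometric integrality from \cite{ega43}, I choose a subring $A_0\subset K$ of finite type over $\Z$, a projective and flat $A_0$-scheme $X_0$ of finite presentation with integral geometric fibers, and an isomorphism $X_0\otimes_{A_0}K\xrightarrow{\sim}X$ compatible with the chosen embedding into projective space (and hence with the Hilbert polynomial $\phi$). By Grothendieck's theorem on the representability of the relative Picard functor, $\mathrm{Pic}^{\phi}_{X_0/A_0}$ is then representable by a projective $A_0$-scheme $P_0$ of finite presentation, so that $P:=P_0\otimes_{A_0}K$ represents $\mathrm{Pic}^{\phi}_{X/K}$ and $\mathrm{Pic}^{\phi}(X)=P(K)$ (the identification using that $X_0/A_0$ has integral geometric fibers).

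Next I would transfer. By transfer, $\str{P_0}$ is a *projective *scheme of *finite presentation over $\str{A_0}$ representing the internal relative Picard functor $\str{\mathrm{Pic}^{\phi}_{\str{X_0}/\str{A_0}}}$. Since the construction of $N$ gives $N(X)=\str{X_0}\otimes_{\str{A_0}}K$, *base change yields $\str{\mathrm{Pic}^{\phi}_{N(X)/K}}\cong \str{P_0}\otimes_{\str{A_0}}K = N(P)$, and hence $\str{\mathrm{Pic}^{\phi}(N(X))}=\str{N(P)(K)}$. Unwinding the definitions one sees that the map of the statement (defined via \cite{enlsch}[Corollary 5.15], compare Proposition \ref{h1undpicundN}) is the tautological map on $K$-valued points induced by $P\mapsto N(P)$.

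Finally, since $K$ itself is a *Artinian $K$-algebra and $P$ is of finite presentation over $K$, \cite{enlsch}[Theorem 4.13] gives that the canonical map
$$\Hom{K}{\spec{K}}{P}\longrightarrow \str{\Hom{K}{\str{\spec{K}}}{N(P)}}$$
is a bijection, which is exactly the claim. The principal obstacle is the middle step: one must verify that Grothendieck's construction of $\mathrm{Pic}^{\phi}_{X_0/A_0}$ is internal to the superstructure $\hat M$ (so that transfer applies to it), and that a model $X_0/A_0$ with integral geometric fibers really can be chosen---the latter follows from the constructibility of the locus of geometrically integral fibers in $\spec{A_0}$ combined with generic flatness of $X_0$ over $A_0$, while the former amounts to the observation that the proofs of representability take place entirely within an $\hat M$-small category of fibered data.
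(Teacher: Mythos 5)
Your proposal is correct and follows essentially the same route as the paper: spread out a model $X_0/A_0$ with geometrically integral fibers, represent $\mathrm{Pic}^{\phi}_{X_0/A_0}$ by a finite-type $A_0$-scheme via Grothendieck's theorem (the paper cites SGA6, XIII 3.2(iii) and 2.11), identify the *Picard scheme of $N(X)$ with $N$ of the Picard scheme by base change, and conclude with the point-comparison theorem of \cite{enlsch} (the paper invokes its Theorem 4.14, you its Theorem 4.13 for *Artinian algebras; the role is the same). The only slight overstatement is calling the representing scheme projective --- Grothendieck's theorem only gives quasi-projectivity/finite type in this generality --- but your argument never uses projectivity, only finite presentation, so nothing is affected.
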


\begin{proof}
We find a subring $A_0\subset K$ of finite type over $\Z$ and a
projective $A_0$-scheme $\fX$ with geometrically integral fibers such
that $\fX_0\otimes_{A_0}K=X$. We denote by $\underline{Pic}_{\fX_0/A_0}^{\phi}$
the relative Picard-functor. By  \cite{SGA6}[XIII, 3.2 (iii) and 2.11]
$\underline{Pic}_{\fX_0/A_0}^{\phi}$ is representable by a scheme
$Pic_{\fX_0/A_0}^\phi$ of finite type over $A_0$. Then
$\underline{Pic}_{X/K}^\phi$ is represented by
$Pic_{\fX_0/A_0}^\phi\otimes_{A_0}K,$ and $\str{\underline{Pic}}_{N(X)/k}^\phi$
is represented by
$$\str{Pic}_{\fX_0/A_0}^\phi\otimes_{A_0}K=N(Pic_{\fX_0/A_0}^\phi\otimes_{A_0}K),$$
so the theorem follows from \cite{enlsch}[theorem 4.14].
\end{proof}

\vspace{\abstand}

\begin{cor}\label{cor:degree}
Let $X$ be as in the above theorem. Then the morphism
$$N: CH^1(X)\rightarrow \str{CH}^1(N(X))$$
is injective, and the image consists of those *divisors whose
Hilbert polynomial is in $\Q[t]\subset\str{\Q}\str{[t]}$.
\end{cor}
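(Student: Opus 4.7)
The plan is to reduce the corollary to the preceding theorem by decomposing the Picard group according to Hilbert polynomial and using the identification of $CH^1$ with $\Pic$. Since $X$ is projective with geometrically integral fibers, we have $CH^1(X)\cong\Pic(X)$ (and similarly $\str{CH}^1(N(X))\cong\str{\Pic}(N(X))$ by transfer), and the Picard scheme decomposes as a disjoint union $\Pic(X)=\bigsqcup_{\phi\in\Q[t]}\Pic^\phi(X)$ indexed by Hilbert polynomials. Internally, we have $\str{\Pic}(N(X))=\str{\bigsqcup}_{\phi\in\str{\Q}[t]}\str{\Pic}^\phi(N(X))$, and the functor $N$ respects this grading: for a line bundle on $X$ with Hilbert polynomial $\phi\in\Q[t]$, the *line bundle $N(\cdot)$ has the same Hilbert polynomial $\phi$, viewed now as an element of $\str{\Q}[t]$ via the canonical inclusion $\Q[t]\subset\str{\Q}\str{[t]}$. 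This last point follows because a model $\fX_0/A_0$ of $X$ can be chosen so that the Hilbert polynomial of the line bundle is computed on the model, and transfer shows the *Hilbert polynomial of $\str{\cL_0}\otimes_{\str A_0}A$ equals the original $\phi$.

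For injectivity, I would take $x,y\in CH^1(X)$ with $N(x)=N(y)$. Each lies in some $\Pic^{\phi}(X)$, and since $N$ preserves the Hilbert polynomial, they must lie in the same component $\Pic^{\phi}(X)$ for some $\phi\in\Q[t]$. By the previous theorem, the map $\Pic^{\phi}(X)\to\str{\Pic}^{\phi}(N(X))$ is a bijection, so $x=y$.

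For the description of the image, I would argue both inclusions. If $z=N(x)$ with $x\in\Pic^{\phi}(X)$ for some $\phi\in\Q[t]$, then by the compatibility just explained, $z\in\str{\Pic}^{\phi}(N(X))$, so its *Hilbert polynomial lies in $\Q[t]\subset\str{\Q}\str{[t]}$. Conversely, if $z\in\str{CH}^1(N(X))$ has Hilbert polynomial $\phi\in\Q[t]$, then $z\in\str{\Pic}^{\phi}(N(X))$, and the previous theorem supplies a unique $x\in\Pic^{\phi}(X)\subset CH^1(X)$ with $N(x)=z$.

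The main technical obstacle I expect is the compatibility check that $N$ preserves the Hilbert polynomial on the nose, and the verification that the canonical decomposition of $\str{\Pic}(N(X))$ into components indexed by $\str{\Q}\str{[t]}$ arises by applying $*$ component-wise to the decomposition of the Picard scheme of a model $\fX_0/A_0$. Both of these follow from the construction of $N$ via a finitely generated model together with the fact that $\underline{\Pic}^{\phi}_{\fX_0/A_0}$ is of finite type (which is exactly what was invoked in the preceding theorem); once this is in place, the decomposition-by-$\phi$ argument is routine.
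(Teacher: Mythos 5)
Your argument is correct and is exactly the intended one: the paper states this corollary without a separate proof, treating it as immediate from decomposing $\mathrm{Pic}(X)$ (resp.\ $\str{\mathrm{Pic}}(N(X))$) into the pieces $\mathrm{Pic}^{\phi}$ indexed by Hilbert polynomials, noting that $N$ preserves the Hilbert polynomial (via a model over a finitely generated subring, as you say), and applying the bijectivity of $\mathrm{Pic}^{\phi}(X)\rightarrow\str{\mathrm{Pic}}^{\phi}(N(X))$ componentwise. The only caveat, which you share with the paper, is the identification $CH^1(X)\cong\mathrm{Pic}(X)$, which strictly speaking needs $X$ locally factorial (e.g.\ smooth) or a Cartier-divisor reading of $CH^1$; apart from that your proof is complete.
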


\vspace{\abstand}

Lemma \ref{imageN} describes the image of $N:CH^i(X)\rightarrow
\str{CH}^i(N(X)).$ Now we want to show that in general $N$ fails to be
injective.

For that we consider a smooth projective irreducible surface $X$ over
$\C$ with $H^2(X,\cO_X)\neq 0$. For such a surface Mumford showed:

\vspace{\abstand}

\begin{thm}\label{theorem:mumford}
Let $X$ be as above. Then $CH^2(X)_0:=\{\text{0-cycles of
  degree zero}\}$ is not finite dimensional, i.e.
for all $n\in\N$ the natural map
$$S^n X(\C)\times S^n X(\C)\rightarrow CH^2(X)_0$$
is not surjective.
\end{thm}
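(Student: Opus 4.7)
The plan is to carry out Mumford's original strategy, phrased in the modern language of decomposition of the diagonal (following Bloch--Srinivas). The existence of a nonzero holomorphic $2$-form, combined with the assumed parametrization of $CH^2(X)_0$ by a finite-dimensional variety, will force a relation on the diagonal which is then contradicted by acting on that $2$-form.

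First I would invoke Serre duality: since $X$ is smooth projective of dimension $2$,
\[
H^0(X,\Omega^2_X)\cong H^2(X,\cO_X)^*\neq 0,
\]
so there exists a nonzero $\omega\in H^0(X,\Omega^2_X)$. Next, suppose for contradiction that the sum map
\[
\phi\colon S^nX(\C)\times S^nX(\C)\longrightarrow CH^2(X)_0,\quad(A,B)\mapsto [A]-[B]
\]
is surjective for some $n\in\N$. The key consequence I would establish is the Bloch--Srinivas spreading principle: the surjectivity hypothesis, being preserved under base change, implies that for every algebraically closed field $K\supseteq\C$ the group $CH^2(X_K)_0$ is already exhausted by $\C$-points. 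Applying this to the function-field point $\eta\colon\mathrm{Spec}\,\overline{\C(X)}\to X$, the class $[\eta]-[x_0]\in CH^2(X_{\overline{\C(X)}})_0$ is trivial modulo $\C$-rational equivalences. Spreading this rational equivalence out via the localization sequence for Chow groups yields a decomposition
\[
N\cdot[\Delta_X]=[Z_1]+[Z_2]\quad\text{in }CH^2(X\times X)\otimes\Q
\]
for some $N\geq 1$, where $Z_1$ is supported on $D\times X$ for a proper closed subscheme $D\subsetneq X$ of codimension $1$, and $Z_2$ is supported on $X\times W$ for a $0$-dimensional closed subscheme $W\subset X$.

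To close the argument, I would act by both sides of this correspondence identity on $\omega\in H^0(X,\Omega^2_X)$. The diagonal $\Delta_X$ acts as the identity, so the left-hand side contributes $N\omega$. On the right, the action of $Z_1$ factors through $H^0(\widetilde D,\Omega^2_{\widetilde D})$ for a desingularization $\widetilde D\to D$, and this group vanishes because $\dim\widetilde D\leq 1$; similarly the action of $Z_2$ factors through $H^0(W,\Omega^2_W)=0$ because $\dim W=0$. Thus $N\omega=0$ in the torsion-free $\C$-vector space $H^0(X,\Omega^2_X)$, forcing $\omega=0$, which contradicts the first step.

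The main obstacle is the decomposition-of-the-diagonal step: it requires the non-trivial Bloch--Srinivas spreading technique (translating a rational equivalence over a function field into a cycle-level identity on $X\times X$), a careful use of the localization sequence for Chow groups, and verification that the correspondence action on $H^0(X,\Omega^2_X)$ is well-defined on rational equivalence classes and commutes with the resolution $\widetilde D\to D$. The remaining steps are essentially formal once this decomposition is in hand.
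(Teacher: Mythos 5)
The paper itself offers no argument here: it quotes the statement and refers to Mumford's original article, whose proof goes through induced $2$-forms on symmetric products. Your route --- Bloch--Srinivas decomposition of the diagonal plus the action of correspondences on $H^0(X,\Omega^2_X)$ --- is a genuinely different, standard way to obtain the theorem, and its skeleton is sound: a nonzero $\omega\in H^0(X,\Omega^2_X)$ from $H^2(X,\cO_X)\neq 0$ via Serre duality, a decomposition $N\cdot[\Delta_X]=[Z_1]+[Z_2]$ with both pieces annihilating the $(2,0)$-part while $\Delta_X$ acts as the identity, hence $N\omega=0$, a contradiction.

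The gap is in the decomposition step, where you claim more than the hypothesis gives. Surjectivity of $S^nX(\C)\times S^nX(\C)\rightarrow CH^2(X)_0$ does \emph{not} imply that $CH^2(X_K)_0$ is ``exhausted by $\C$-points'' for algebraically closed $K\supseteq\C$, nor that $Z_2$ may be taken supported on $X\times W$ with $W$ zero-dimensional: a two-term decomposition with $W$ a point forces $CH^2(X)_0\otimes\Q=0$, and this fails for surfaces which \emph{are} finite dimensional, e.g.\ a bielliptic surface, where $CH^2(X)_0\cong \mathrm{Alb}(X)(\C)$ is a nontrivial divisible group (and over $K=\overline{\C(X)}$ the group $\mathrm{Alb}(X)(K)$ strictly contains what $\C$-points can produce). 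Since your deduction uses only finite-dimensionality, it cannot be valid. What finite-dimensionality does give over the uncountable field $\C$ --- and this is itself a nontrivial input (Roitman's theorem, or the implication (i)$\Rightarrow$(ii) of Proposition \ref{prop:jannsen}) --- is that $CH^2$ is supported on a curve: $CH^2(X_{\C}\setminus B)=0$ for a one-dimensional linear section $B$, and spreading \emph{this} out yields $Z_2$ supported on $X\times B$. Your final step survives the repair, since $H^0(\widetilde B,\Omega^2_{\widetilde B})=0$ already for a curve; you should also fix the bookkeeping of the two pieces: with the usual convention, the piece supported on $D\times X$ is killed not because $H^0(\widetilde D,\Omega^2_{\widetilde D})=0$ but because its action lands in the image of the Gysin map from a divisor, hence in type $(1,1)$ (equivalently, the resulting holomorphic form vanishes on the dense open $X\setminus D$), while the piece supported on $X\times B$ is the one whose action factors through forms pulled back from the curve $B$.
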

\begin{proof}
That is the main result of \cite{mumfordfinite}.
\end{proof}

\vspace{\abstand}

We have further the following characterisation of finite
dimensionality:

\vspace{\abstand}

\begin{satz}\label{prop:jannsen}
Let $X$ be a smooth, projective, geometrically irreducible variety of
dimension $d$ over a field $k$, and let $\Omega\supseteq k$ be an
algebraically closed field. Consider the following statements
\begin{enumerate}
\item There is an $n\in\N$ such that
  $$S^n X(\Omega)\times S^n X(\Omega)\rightarrow
  CH^d(X_{\Omega})_0$$ is surjective
\item If $B\subseteq X_{\Omega}$ is a smooth linear space section of
  dimension one, then
  $$ CH^d(X_{\Omega}\setminus B)=0$$
\item The canonical map
  $$ CH^d(X_{\Omega})_0\rightarrow Alb(X)(\Omega)$$
  is an isomorphism, where $Alb(X)$ is the albanese variety of $X$.
\end{enumerate}
Then $(iii)\Rightarrow (ii)\Rightarrow (i)$. If $\Omega$ is
uncountable, all three statements are equivalent.
\end{satz}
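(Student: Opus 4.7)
The overall plan is to obtain $(iii)\Rightarrow(ii)\Rightarrow(i)$ directly from the localisation sequence for zero-cycles on $X_\Omega$ combined with Riemann--Roch on the section curve $B$, and then to establish $(i)\Rightarrow(iii)$ in the uncountable case by a Mumford--Bloch style countability argument on the fibres of the Albanese morphism. Only this last implication will use the hypothesis on $\Omega$.

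For $(iii)\Rightarrow(ii)$ I will use the localisation exact sequence
$$
\mathrm{CH}_0(B)\longrightarrow \mathrm{CH}^d(X_\Omega)\longrightarrow \mathrm{CH}^d(X_\Omega\setminus B)\longrightarrow 0
$$
attached to the closed immersion $B\hookrightarrow X_\Omega$, and reduce the vanishing on the right to surjectivity of the left-hand map. Surjectivity on degrees is automatic (any closed point of $B$ maps to a zero-cycle of degree one on $X_\Omega$), so the content is the degree-zero part. Under (iii) the target of the push-forward in degree zero is identified with $\mathrm{Alb}(X)(\Omega)$ and the map itself factors through $\mathrm{Pic}^0(B)(\Omega)=\mathrm{Alb}(B)(\Omega)$; it is therefore enough to show that $\mathrm{Alb}(B)\to \mathrm{Alb}(X)$ is surjective. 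Since $\mathrm{Alb}$ is controlled by $H^1$, this follows from iterated weak Lefschetz applied to a smooth linear space section of dimension one: $H_1(B)\to H_1(X_\Omega)$ is surjective, and hence so is the induced map of Albanese varieties.

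For $(ii)\Rightarrow(i)$ I will read off from the same localisation sequence the surjection $\mathrm{Pic}^0(B)(\Omega)\twoheadrightarrow \mathrm{CH}^d(X_\Omega)_0$ on degree-zero parts. Because $B$ is a smooth projective curve of some genus $g$, Riemann--Roch lets me write every element of $\mathrm{Pic}^0(B)(\Omega)$ as $[D_1]-[D_2]$ with $D_1,D_2$ effective of degree $g$. The resulting surjection from $S^g B(\Omega)\times S^g B(\Omega)$ factors through $S^g X(\Omega)\times S^g X(\Omega)$ via $B\hookrightarrow X_\Omega$, so (i) holds with $n:=g$.

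For $(i)\Rightarrow(iii)$ under the assumption that $\Omega$ is uncountable, surjectivity of $\mathrm{CH}^d(X_\Omega)_0\to\mathrm{Alb}(X)(\Omega)$ is automatic because $X(\Omega)$ generates the Albanese. For injectivity I will consider the fibre $F\subset S^nX\times S^nX$ of the Albanese morphism $(D_1,D_2)\mapsto \mathrm{alb}(D_1-D_2)$ over the origin. Under (i), $F(\Omega)$ surjects onto the kernel of the Albanese map on $\mathrm{CH}^d(X_\Omega)_0$, so the task is to show that every point of $F(\Omega)$ is rationally equivalent on $X_\Omega$ to a point of the diagonal (which represents zero). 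The plan is to note that on the finite-type scheme $F\times F$ the subset parametrising pairs that represent the same class in $\mathrm{CH}^d(X_\Omega)_0$ is a countable union of Zariski-closed subvarieties (indexed by the combinatorial data of the rational equivalences); the equivalence class of the diagonal inside $F$ is then itself a countable union of Zariski-closed subsets, and because $\Omega$ is uncountable this union must exhaust every irreducible component of $F$ that it meets, giving the required vanishing. This last implication is the main obstacle of the proposition and is precisely where uncountability of $\Omega$ enters; the first two implications are, by contrast, essentially formal consequences of the localisation sequence and Riemann--Roch.
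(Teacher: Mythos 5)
The paper itself does not prove this proposition; it simply cites Jannsen (\cite{JaMotivesBand}, 1.6), so your attempt is necessarily an independent argument. Your treatment of the two unconditional implications is essentially the standard one and is sound in outline: for $(iii)\Rightarrow(ii)$, the localisation sequence $\mathrm{CH}_0(B)\to \mathrm{CH}^d(X_\Omega)\to \mathrm{CH}^d(X_\Omega\setminus B)\to 0$ together with the surjectivity of $\mathrm{Alb}(B)\to \mathrm{Alb}(X)$ for a one-dimensional smooth linear section (over an arbitrary algebraically closed field you should argue via the \'etale fundamental group / SGA~2 Lefschetz theorem rather than singular $H_1$) and the functoriality of the Albanese map under proper push-forward does give surjectivity of $\mathrm{CH}_0(B)\to \mathrm{CH}^d(X_\Omega)$; and for $(ii)\Rightarrow(i)$ the Riemann--Roch argument producing $n=g(B)$ is correct (one should also note that a smooth connected one-dimensional linear section exists by Bertini over $\Omega$).

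The genuine gap is in $(i)\Rightarrow(iii)$, which is the only place the hypothesis on $\Omega$ enters and which is the real content of the proposition. Your countability argument is logically inverted: over an uncountable field the usable principle is that an irreducible variety cannot be written as a countable union of \emph{proper} closed subsets, so that if a countable union of closed subsets covers a component, one of them must equal that component. It is simply false that a countable union of closed subsets ``must exhaust every irreducible component it meets'' --- a single point already meets a component without exhausting it --- and what you would need is precisely the covering statement, i.e.\ that every point of the fibre $F$ is rationally equivalent to zero, which is the conclusion you are trying to prove, not an input. In fact $(i)\Rightarrow(iii)$ is Roitman's theorem that finite-dimensionality of $\mathrm{CH}^d(X_\Omega)_0$ forces the Albanese map to be an isomorphism; all known proofs require substantially more than a countability observation, e.g.\ showing that the fibres of $S^nX\times S^nX\to \mathrm{CH}^d(X_\Omega)_0$ are countable unions of closed subsets \emph{and} then developing the theory of regular homomorphisms to an abelian variety (Roitman, Jannsen), or passing through a Bloch--Srinivas decomposition of the diagonal to show $\ker(\mathrm{alb})$ is torsion and invoking Roitman's theorem on torsion zero-cycles. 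As it stands, the hard implication is not established, so your proposal proves only $(iii)\Rightarrow(ii)\Rightarrow(i)$; for the equivalence in the uncountable case one should either supply the Roitman-type argument in full or, as the paper does, cite Jannsen's exposition.
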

\begin{proof}
See \cite{JaMotivesBand}[1.6].
\end{proof}

\vspace{\abstand}

Furthermore, For the algebraic closure of a finite field we have the
following result:

\begin{thm}\label{theorm:KS}
Let $X$ be as in the above proposition, where $k$ is now a finite
field. Then the morphism
$$CH^d(\bar{X})_0\rightarrow Alb(X)(\bar{k})$$
is an isomorphism. Here $Alb(X)$ denotes the albanese variety of a
scheme $X$.
\end{thm}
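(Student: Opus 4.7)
The plan is to combine three ingredients: surjectivity of the Albanese map (classical), Roitman's theorem on torsion together with its $p$-primary refinement by Milne, and the finiteness theorem of Kato--Saito for the degree-zero Chow group over a finite field.

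First, I would verify that
\[
  alb_{\bar X} : CH^d(\bar X)_0 \longrightarrow Alb(X)(\bar k)
\]
is surjective. This is classical: $Alb(X)$ is generated as a group scheme by the image of $X$ under the universal morphism $X \to Alb(X)$, so every $\bar k$-point of $Alb(X)$ is a $\Z$-linear combination of images of $\bar k$-points of $X$, hence lies in the image of $alb_{\bar X}$.

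Next, I would observe that $Alb(X)(\bar k)$ is a torsion group, since any $\bar k$-point of $Alb(X)$ is defined over some finite subfield $k' \subseteq \bar k$, and the group of $k'$-points of an abelian variety of finite type over a finite field is finite. Correspondingly, I would show that $CH^d(\bar X)_0$ is also torsion: the finiteness theorem of Kato--Saito asserts that $CH^d(Y)_0$ is finite for every smooth projective geometrically integral variety $Y$ over a finite field. Writing $\bar k = \varinjlim_{k'} k'$ as a filtered colimit of finite extensions of $k$ inside $\bar k$, one gets $CH^d(\bar X)_0 = \varinjlim_{k'} CH^d(X_{k'})_0$, a filtered colimit of finite groups, which is a torsion group.

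With both sides torsion and the map surjective, the theorem would then follow from Roitman's theorem together with its $p$-primary extension due to Milne, which asserts that $alb_{\bar X}$ induces an isomorphism on $n$-torsion for every $n \geq 1$. The main obstacle is the invocation of the Kato--Saito finiteness theorem: this is by far the deepest input, coming from higher unramified class field theory of arithmetic schemes, while the surjectivity step and the Roitman/Milne isomorphism on torsion are comparatively standard.
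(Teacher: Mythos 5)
Your argument is correct, but it is organized differently from the paper, which offers no proof at all and simply refers to Kato--Saito \cite{KatoSaito83}[\S 9], where the statement is obtained as an application of their higher-dimensional unramified class field theory. Your route reassembles the result from three standard inputs: torsionness of both sides (for $Alb(X)(\bar k)$ because every point is defined over a finite field, for $CH^d(\bar X)_0$ via the colimit $CH^d(\bar X)_0=\varinjlim_{k'}CH^d(X_{k'})_0$ and the Kato--Saito finiteness of each $CH^d(X_{k'})_0$), together with Roitman's theorem and Milne's $p$-primary complement, which say that the Albanese map is an isomorphism on torsion subgroups; since both groups are entirely torsion, the full map is an isomorphism. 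This is sound, and note that your separate surjectivity step is then redundant: once the map is an isomorphism onto $Alb(X)(\bar k)_{\mathrm{tors}}=Alb(X)(\bar k)$, surjectivity is automatic. Two small remarks on economy of means: the torsionness of $CH^d(\bar X)_0$ can be had more cheaply than by invoking Kato--Saito finiteness (a degree-zero zero-cycle over $\bar{\F}_p$ is supported on a projective curve and, after descending to a finite field and passing to the normalization, comes from $Pic^0$ of a curve over a finite field, hence is torsion), so the only genuinely deep input needed in your scheme is Roitman--Milne; conversely, the paper's citation of \cite{KatoSaito83} packages the whole statement inside class field theory. What your version buys is an explicit logical skeleton, at the price of quoting several big theorems rather than one.
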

\begin{proof}
See \cite{KatoSaito83}[§9]
\end{proof}

\vspace{\abstand}

Now let $P\in\str{\P}-\P$ be an infinite prime with
$\bar{\Q}\subset\str{\F}_P$, and let $k$ be an *algebraic closure of
$\str{\F}_P$. Then $k$ is in particular an algebraically closed field, and $\C$ can be
embedded in $k$.

We can then use Theorem \ref{theorem:mumford} \ref{theorm:KS} to prove the following theorem.

\vspace{\abstand}

\begin{thm}
Let $X$ be as above. Then the morphism
$$CH^{2}(X_k)\rightarrow \str{CH}^{2}(N(X_k))$$
is not injective.
\end{thm}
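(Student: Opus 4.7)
The strategy plays Mumford's unboundedness of $CH^2$ for surfaces with $H^2(\cO_X)\neq 0$ (Theorem~\ref{theorem:mumford}) against the smallness of $\str{CH}^2$ over *algebraic closures of *finite fields (Theorem~\ref{theorm:KS}, transferred), in order to produce a nonzero class in $CH^2(X_k)$ killed by $N$.

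First, by Jannsen's Proposition~\ref{prop:jannsen} applied with $\Omega=\C$ (which is uncountable) together with Mumford's Theorem~\ref{theorem:mumford}, the classical Albanese map $\mathrm{alb}\colon CH^2(X)_0\to Alb(X)(\C)$ has nontrivial kernel. Pick a nonzero $\alpha\in\ker(\mathrm{alb})$ and let $\alpha_k\in CH^2(X_k)_0$ denote its pullback along $\C\hookrightarrow k$. By functoriality of the Albanese under base change, $\mathrm{alb}_k(\alpha_k)=0$ in $Alb(X_k)(k)=Alb(X)(\C)\otimes_\C k$. Moreover, $\alpha_k$ is nonzero in $CH^2(X_k)$: the base-change map $CH^2(X)\to CH^2(X_k)$ for an extension $\C\hookrightarrow k$ of algebraically closed fields of characteristic zero is injective by a standard Lefschetz-principle specialization argument along finitely generated subextensions of $k/\C$.

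Second, I apply the transferred Kato--Saito theorem to $N(X_k)$. Since $N(X_k)=\str{X_0}\otimes_{\str{A_0}}k$ is *finitely presented over $k=\overline{\str{\F}_P}$, and $k$ is internally the union of its *finite subfields, transfer of the classical spreading-out statement yields a *finite subfield $k_1\subseteq k$ and a *scheme $Y/k_1$ with $Y\otimes_{k_1}k\cong N(X_k)$. Transfer of Theorem~\ref{theorm:KS} applied to $Y$ then produces a *isomorphism
$$
\str{CH}^2(N(X_k))_0 \xrightarrow{\sim} \str{Alb}(N(X_k))(k).
$$
Using compatibility of $N$ with the Albanese construction --- obtained by spreading $\mathrm{alb}\colon X\to Alb(X)$ to a model over $A_0$, applying $\str{}$, and base changing along $\str{A_0}\to k$ --- the class $N(\alpha_k)$ maps to $N(\mathrm{alb}_k(\alpha_k))=0$ in $\str{Alb}(N(X_k))(k)$. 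By the *isomorphism above, $N(\alpha_k)=0$ in $\str{CH}^2(N(X_k))$.

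Combining both parts, $\alpha_k\in CH^2(X_k)$ is nonzero yet lies in $\ker(N)$, proving non-injectivity. The principal obstacle I expect is establishing the compatibility of $N$ with Albanese --- verifying $\str{Alb}(N(X_k))\cong N(Alb(X_k))$ and that the classical Albanese map transfers to the *Albanese under $N$ --- which requires carefully spreading out the Albanese variety over a common model $A_0$ and tracking through the construction of $N$; the remaining ingredients are direct transfers of standard results or appeals to the existing literature.
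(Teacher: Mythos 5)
Your overall strategy---playing Mumford (Theorem \ref{theorem:mumford}) plus Jannsen (Proposition \ref{prop:jannsen}) over $\C$ against the transferred Kato--Saito theorem (Theorem \ref{theorm:KS}) over $k$---is the same tension the paper exploits, and your first half is fine: a nonzero $\alpha\in\ker\left(CH^2(X)_0\to Alb(X)(\C)\right)$ exists, and $\alpha_k$ stays nonzero in $CH^2(X_k)$ by the standard specialization argument for extensions of algebraically closed fields (the paper implicitly uses the same kind of specialization when it transports Mumford's infinite-dimensionality from $\C$ to $k$). The genuine gap is exactly the step you flag and then use anyway: the compatibility of $N$ with the Albanese. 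Your argument needs (a) an isomorphism $\str{Alb}(N(X_k))\cong N(Alb(X_k))$ and (b) commutativity of the square comparing $\mathrm{alb}$ on $CH^2(X_k)_0$ with $\str{\mathrm{alb}}$ on $\str{CH}^2(N(X_k))_0$. The cheap statement available from the internal universal property---that $N(\mathrm{alb})$ factors as $g\circ\str{\mathrm{alb}}$ for some homomorphism $g:\str{Alb}(N(X_k))\to N(Alb(X_k))$---is not enough: from the vanishing of the image of $N(\alpha_k)$ under $N(\mathrm{alb})$ you only get $g\bigl(\str{\mathrm{alb}}(N(\alpha_k))\bigr)=0$, so you need $g$ injective on $k$-points, i.e.\ essentially the full comparison isomorphism. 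Proving it requires real spreading-out work: shrink the model so that $Pic^0_{\fX_0/A_0}$ is an abelian scheme (smoothness and properness spread out from the characteristic-zero generic fibre, and one must also fix $\dim H^1(\cO)$ to exclude non-reducedness of $Pic^0$ in the positive-characteristic internal fibre), pass to the dual abelian scheme, and construct the relative Albanese morphism from a section so that everything commutes with base change along $\str{A_0}\to k$. None of this is established in the paper, and it is not a direct transfer of results already available there; as written, the central step of your proof is an assertion.

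It is worth seeing how the paper avoids this entirely: it never compares Albanese varieties across $N$. Instead it uses the transferred implication (iii)$\Rightarrow$(ii) of Proposition \ref{prop:jannsen} to turn Kato--Saito into the vanishing $\str{CH}^2(N(X_k)\setminus N(B))=0$ for a smooth linear section curve $B$, while Mumford plus Jannsen over $k$ give $CH^2(X_k\setminus B)\neq 0$; a diagram chase in the two localization sequences then produces $x$ with $N(x)=N(i)_*y$ for some $y\in\str{CH}^1(N(B))$, and Corollary \ref{cor:degree} descends $y$ to $\tilde y\in CH^1(B)$, so that $x-i_*\tilde y$ is an explicit nonzero element of $\ker(N)$. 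So your route would be a genuinely different proof if the Albanese compatibility were supplied, but the paper's detour through the curve $B$ and $CH^1$ exists precisely because that compatibility is the expensive part.
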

\begin{proof}
From \ref{theorem:mumford} and \ref{prop:jannsen} it follows
that for a smooth
linear space section $B\hookrightarrow X_k$ of dimension one we have
$$CH^{2}(X_k\setminus B)\neq 0.$$
But $k$ is internally the *algebraic closure of a *finite
field. Therefore by transfer of \ref{theorm:KS}, the map
$$\str{CH}^{2}(N(X_k))_0\rightarrow \str{Alb}(N(X))(k)$$
is an isomorphism. Then by \ref{prop:jannsen} again we have
$$\str{CH}^{2}(N(X_k)\setminus N(B))=0.$$
Now we consider the commutative diagram
\[
\xymatrix@C=20mm{
CH^1(B) \ar[r]^{N} \ar[d]^{i_*} & \str{CH}^1(N(B)) \ar[d]^{N(i)_*} \\
CH^2(X) \ar[r]^{N} \ar[d]^{j^*} & \str{CH}^2(N(X)) \ar[d]^{N(j)^*}\\
CH^2(X\setminus B)\ar[r]^N \ar[d] & \str{CH}^2(N(X)\setminus N(B)) \ar[d]^{\sim}\\
0 &  0
}
\]

which has exact columns and where $j:(X\setminus B)\hookrightarrow X$ denotes the open
immersion. Now let $x\in CH^2(X)$ with $j^* x\neq 0$. Then by diagram chasing
there is a $y\in \str{CH}^1(N(B))$ such
that $N(i_*)y=N(x)$. Now we have $\deg(y)=\deg(N(x))=\deg(x)$. Therefore
by \ref{cor:degree} there is an $\tilde{y} \in CH^1(B)$
with $N(\tilde{y})=y$. But then we have $x-i_*y\neq 0$ but
$N(x-i_*y)=0.$
\end{proof}

\vspace{\abstand}

On the other hand, the above cited results can also be used to show surjectivity
of $N$ in another situation:

\begin{thm}
Let $k$ be the *algebraic closure of a *finite field, and let $X/k$ a
smooth, projective and geometrically irreducible scheme of dimension
$d$. Then the morphism
$$N:CH^d(X)_0\rightarrow \str{CH}^d(N(X))_0$$
is surjective.
\end{thm}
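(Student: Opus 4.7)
The plan is to factor the map $N:CH^d(X)_0\to\str{CH}^d(N(X))_0$ through the Albanese variety and thereby reduce the claim to the (standard) surjectivity of the Albanese map on zero-cycles of degree zero. Concretely, I would establish a commutative square
\[
\xymatrix{
CH^d(X)_0 \ar[r]^{N} \ar[d]_{a_X} & \str{CH}^d(N(X))_0 \ar[d]^{\str{a}_{N(X)}}_{\wr} \\
Alb(X)(k) \ar[r]^-{\sim} & \str{Alb}(N(X))(k)
}
\]
in which the right vertical is the *Albanese map of $N(X)$, the left vertical is the standard Albanese map of $X$, and the bottom is a canonical identification. Surjectivity of $N$ would then reduce to surjectivity of $a_X$.

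For the right vertical: $N(X)$ is, internally, *smooth, *projective and *geometrically irreducible of *dimension $d$ over $k$, and $k$ is by hypothesis internally the *algebraic closure of a *finite field. Therefore the internal (transferred) version of Theorem \ref{theorm:KS} yields $\str{CH}^d(N(X))_0\xrightarrow{\sim}\str{Alb}(N(X))(k)$. For the bottom arrow, I would use compatibility of Albanese with $N$. Writing $X = X_0\otimes_{K_0}k$ for a model over a subfield $K_0\subset k$ finitely generated over $\Z$ (obtained from Proposition \ref{satzconstrN} by passing to fractions), the fact that Albanese commutes with field extensions gives $Alb(X)\cong Alb(X_0)\otimes_{K_0}k$. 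Applying $N$ and using the analogous internal description for $N(X) = \str{X_0}\otimes_{\str{K_0}}k$ yields $N(Alb(X))\cong\str{Alb(X_0)}\otimes_{\str{K_0}}k\cong\str{Alb}(N(X))$. Then \cite{enlsch}[Theorem 4.13] applied to $Alb(X)\in\SCHFP/k$ with $K:=k$ identifies $\str{N(Alb(X))}(k)$ with $Alb(X)(k)$, producing the desired iso.

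Surjectivity of the standard Albanese map $a_X$ is classical: $k$ is algebraically closed (being *algebraically closed, a first-order property which transfers to external algebraic closedness), and $a(X)\subset Alb(X)$ generates $Alb(X)$ as an algebraic group by the universal property of Albanese. So for some $n\in\N$ the summation morphism $s_n:X^n\to Alb(X)$ is surjective, hence surjective on $k$-points; fixing a base point $x_0\in X(k)$ with $a(x_0)=0$, any $b\in Alb(X)(k)$ can be written as $b = \sum_{i=1}^n a(x_i) = a_X\bigl(\sum_{i=1}^n[x_i]-n[x_0]\bigr)$ with the argument in $CH^d(X)_0$.

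The main obstacle is verifying the commutativity of the square, i.e.\ the naturality of the Albanese map with respect to $N$. Once the identification $\str{Alb}(N(X))\cong N(Alb(X))$ is set up via the descent description above, commutativity should follow from the functoriality of $N$ together with the universal property of Albanese, in the spirit of---but more substantively than---the Picard compatibility recorded in Proposition \ref{h1undpicundN}.
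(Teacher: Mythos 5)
Your overall reduction scheme (the square through the Albanese, the transferred Kato--Saito Theorem \ref{theorm:KS} giving the right-hand isomorphism, and the classical surjectivity of $CH^d(X)_0\rightarrow Alb(X)(k)$ over the algebraically closed field $k$) is sound as a strategy, but the load-bearing step --- the identification $Alb(X)(k)\cong\str{Alb}(N(X))(k)$ together with the commutativity of the square --- is exactly the part you do not prove, and your sketch does not close it. Two concrete problems: (a) ``Albanese commutes with field extensions'' is not available in the generality you invoke. Your model lives over a finitely generated field $K_0\subset k$, which in positive characteristic is imperfect, and the formation of $(\mathrm{Pic}^0_{X_0/K_0})_{\mathrm{red}}$, hence of the Albanese, need not commute with inseparable base change; you would have to enlarge the model and argue that a sufficiently good finitely generated $K_0$ exists, and you cannot simply pass to an algebraic closure of $K_0$, since the enlargement formalism (the canonical map $\str{A_0}\rightarrow A$) requires the base to be of finite type over $\Z$. (b) The fallback ``universal property of Albanese plus functoriality of $N$'' cannot by itself identify $N(Alb(X))$ with $\str{Alb}(N(X))$: the internal universal property quantifies over \emph{all} internal abelian varieties over $k$ and all internal pointed morphisms out of $N(X)$, and most of these are not of the form $N(-)$ (only objects of bounded complexity are), so functoriality of $N$ gives no handle on them. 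A genuine proof along your lines would need an Albanese analogue of the representability/base-change argument used for $\mathrm{Pic}^\phi$ in the divisor theorem, plus compatibility of the cycle-to-Albanese map with $N$ at the level of detail of Proposition \ref{h1undpicundN}; as written, the center of your argument is an assertion, not a proof.

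For comparison, the paper sidesteps any Albanese--$N$ comparison. It transfers Theorem \ref{theorm:KS} and then applies the implication (iii)$\Rightarrow$(ii) of Proposition \ref{prop:jannsen} internally to get $\str{CH}^d(N(X)\setminus N(B))=0$ for a smooth linear curve section $B\hookrightarrow X$; the internal localization sequence then makes $(N(i))_*:\str{CH}^1(N(B))_0\rightarrow\str{CH}^d(N(X))_0$ surjective; and Corollary \ref{cor:degree} (the divisor case, already established via the relative $\mathrm{Pic}^\phi$) shows $CH^1(B)_0\rightarrow\str{CH}^1(N(B))_0$ is an isomorphism, so compatibility of $N$ with proper pushforward (Proposition \ref{sa:NcomWpf}) finishes the argument. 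If you want to keep your route, supply the missing $N$--Albanese compatibility; otherwise the paper's reduction to divisors on a curve section is the shorter and fully supported path.
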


\begin{proof}
By transfer of \ref{theorm:KS} and \ref{prop:jannsen} we have the
following diagram
\[
\xymatrix{
\str{CH}^1(NB)_0 \ar[r]^{(Ni)_*} & \str{CH}^d(N(X))_0 \ar[r] &
\str{CH}^d(N(X)\setminus N(B)) =0 \\
CH^1(B)_0 \ar[u] \ar[r] & CH^d(X) \ar[u]
}
\]
with exact first line. But the vertical map on the left is an
isomorphism by \ref{cor:degree}, and so the claim follows.
\end{proof}

\vspace{\abstand}

In contrast to that we have the following result for surfaces over $\str{\C}$:

\begin{satz}
Let $X$ be a smooth, projective, and irreducible surface over $\C$ with $H^2(X,\cO_X)\neq 0$.
Then the map
$$ CH_0(X_{\str{\C}})_0\rightarrow \str{CH}_0(N(X_{\str{\C}}))_0$$
is not surjective.
\end{satz}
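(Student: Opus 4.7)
The plan is to combine Lemma \ref{imageN}, which characterises the image of $N:CH^2(X_{\str{\C}})_0\rightarrow\str{CH}^2(N(X_{\str{\C}}))_0$ as the *classes of bounded standard *complexity, with the transfer of Theorem \ref{theorem:mumford} applied to $X$. I would first unpack ``*complexity $\leq d$'' for a zero-dimensional *cycle: since $\str{\C}$ is *algebraically closed (by transfer from $\C$), every 0-dimensional *integral *closed subscheme of $N(X_{\str{\C}})$ of *degree less than $d$ is a single $\str{\C}$-*point, so every *class of *complexity $\leq d$ in $\str{CH}^2(N(X_{\str{\C}}))_0$ admits a representative of the form $[D_+]-[D_-]$ with $D_\pm\in\str{S}^{d^2}N(X_{\str{\C}})(\str{\C})$ (split the representing *cycle into its positive and negative parts, then pad both with copies of a fixed base *point to equalise lengths). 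Consequently
\[
  \mathrm{image}(N)\;\subseteq\;\bigcup_{d\in\N}\mathrm{image}\Bigl(\str{S}^{d^2}N(X_{\str{\C}})(\str{\C})^2\longrightarrow\str{CH}^2(N(X_{\str{\C}}))_0\Bigr).
\]

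Next I would transfer Theorem \ref{theorem:mumford} directly. Choosing a finitely generated $\Z$-subalgebra $A_0\subseteq\C$ and a model $X_0/A_0$ with $X=X_0\otimes_{A_0}\C$, the construction of $N$ in Proposition \ref{satzconstrN} yields $N(X_{\str{\C}})=\str{X_0}\otimes_{\str{A_0}}\str{\C}=\str{X}$, and correspondingly $N(X_{\str{\C}})(\str{\C})=\str{X}(\str{\C})$. Transferring the standard statement ``for every $n\in\N$ the map $S^nX(\C)^2\to CH^2(X_\C)_0$ is not surjective'' (supplied by Theorem \ref{theorem:mumford}) then gives: for every $n\in\str{\N}$, the *map $\str{S}^nN(X_{\str{\C}})(\str{\C})^2\to\str{CH}^2(N(X_{\str{\C}}))_0$ is not *surjective. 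Fixing any nonstandard $M\in\str{\N}\setminus\N$, I would pick a witness $\beta\in\str{CH}^2(N(X_{\str{\C}}))_0$ outside the image of $\str{S}^M\times\str{S}^M$. For each standard $d\in\N$ we have $d^2<M$, and padding with the base *point gives $\mathrm{image}(\str{S}^{d^2}\times\str{S}^{d^2})\subseteq\mathrm{image}(\str{S}^M\times\str{S}^M)$, so $\beta$ avoids every such subimage and the inclusion displayed above shows $\beta\notin\mathrm{image}(N)$.

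The main obstacle is the transfer step: one must verify that Mumford's non-surjectivity, stated in Theorem \ref{theorem:mumford} for the specific standard object $X/\C$, genuinely is a first-order statement of $\hat M$ quantified by $\forall n\in\N$, so that transfer legitimately supplies the analogous *statement quantified by $\forall n\in\str{\N}$; this is precisely what lets us take $n=M$ beyond any standard $d^2$. Combined with the identification $N(X_{\str{\C}})=\str{X}$ from Proposition \ref{satzconstrN}, the rest reduces to bookkeeping around Lemma \ref{imageN}.
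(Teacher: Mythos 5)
Your proposal is correct and is essentially the paper's own argument: both proofs transfer Mumford's non-surjectivity (Theorem \ref{theorem:mumford}) to all $n\in\str{\N}$, identify $N(X_{\str{\C}})=\str{X}$, and then observe that the image of $N$ lies inside the image of $\str{S}^{M}\times\str{S}^{M}$ for an infinite $M$ (via padding with a base point), which the transferred statement shows is not all of $\str{CH}_0(N(X_{\str{\C}}))_0$. The only difference is cosmetic: you bound the image of $N$ through Lemma \ref{imageN} and the bounded-*complexity description of degree-zero *cycles, whereas the paper uses the commutative square comparing $S^nX_{\str{\C}}(\str{\C})^2\rightarrow CH_0(X_{\str{\C}})_0$ with its *analogue together with the fact that these images exhaust $CH_0(X_{\str{\C}})_0$ over standard $n$.
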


\begin{proof}
By Theorem \ref{theorem:mumford} and its transfer we have:

$$\forall n\in\N:S^nX_{\str{\C}}(\str{\C})\times S^nX_{\str{\C}}(\str{\C})\rightarrow CH_0(X_{\str{\C}})_0 \text{ is not surjective.}$$
and
$$\forall n\in\str{\N}:S^nN(X_{\str{\C}})(\str{\C})\times S^nN(X_{\str{\C}})(\str{\C})\rightarrow \str{CH}_0(N(X_{\str{\C}}))_0
\text{ is not surjective,}$$

and we have the following commutative diagram:
$$
\xymatrix{
S^nX_{\str{\C}}(\str{\C})\times S^nX_{\str{\C}}(\str{\C}) \ar[r] \ar[d] & CH_0(X_{\str{\C}})_0 \ar[d] \\
S^nN(X_{\str{\C}})(\str{\C})\times S^nN(X_{\str{\C}})(\str{\C}) \ar[r] & \str{CH}_0(N(X_{\str{\C}}))_0
}
$$
Now $\cup_{n\in\N}(\im(S^nX_{\str{\C}}(\str{\C})\times S^nX_{\str{\C}}(\str{\C})\rightarrow CH_0(X_{\str{\C}})_0)=CH_0(X_{\str{\C}})_0$,
and if $CH_0(X_{\str{\C}})_0\rightarrow \str{CH}_0(N(X_{\str{\C}}))_0$ was surjective, then for all $n\in\str{\N}-\N$ the map
$$S^nN(X_{\str{\C}}) \times S^nN(X_{\str{\C}})\rightarrow \str{CH}_0(N(X))_0$$
would be surjective.
\end{proof}

\vspace{\abstand}
Now we want to show that the non-injectivity of the morphism $N$ somehow says
that our notion of complexity does not go along very well with
rational equivalence. For that let us remind you first another,
similar situation.

Let $f_1$, $f_2$ and $g$ be polynomials over a field $k$ such that $g\in
(f_1,f_2)$. That means that there are polynomials $a_1$ and $a_2$ with
\begin{equation}\label{vorueberl}
g=a_1\cdot f_1 +a_2\cdot f_2.
\end{equation}
Now one can ask for a bound of the minimal degree of $a_1$ and $a_2$ in terms
of the degrees of $f_1$, $f_2$ and $g$ such that (\ref{vorueberl}) holds. And in
fact such a bound exists, and it only depend on the degrees of
$f_1$, $f_2$ and $g$, but not on their coefficients or the field $k$ (!). This
follows for example from the nonstandard fact that the ring
homomorphism
$$k[x_1,\cdots,x_n]\rightarrow k\str{[}x_1,\cdots,x_n]$$
is flat for an internal field $k$ (cf. \cite{vandendries}).

Now we want to answer a similar question for the rational equivalence
of cycles. For that let $x,y\in Z_k(X)$ be two cycles, which are
rational equivalent. That means that there is a cycle $z\in
Z_{k+1}(X\times \P^1)$ such that
$x-y=z_{|X\times\{0\}}-z_{|X\times\{1\}}$. Now a natural question is
wether we can bind the complexity of $z$ by the complexities of $x$ and
$y$. One could hope that the following was true:
\begin{quote}
For all $d,n\in\N$ there is a  constant $C(n,d)\in\N$ such that for
all fields $K$, all closed subschemes $X\hookrightarrow\P^n_K$ of
degree less than $d$ and all rational equivalent cycles $x,y\in
Z_k(X)$ with complexity less than $d$, there is a cycle
$z\in Z_{k+1}(X\times\P^1_K)$ with complexity less than $C(n,d)$, such that
$$x-y=z_{|X\times\{0\}}-z_{|X\times\{1\}}.$$
\end{quote}
But this can not be true:

\vspace{\abstand}

\begin{thm}\label{thmfalse}
The above statement is false.
\end{thm}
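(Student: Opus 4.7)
The plan is to derive a contradiction from the non-injectivity of $N:CH^2(X_k)\to\str{CH}^2(N(X_k))$ proved just above, combined with the description of the image of $N$ on cycle groups coming from Lemma \ref{imageN}. Morally: the hypothesised uniform bound $C(n,d)$ would transfer to $*$rational equivalences, and a $*$rational equivalence of standard $*$complexity must descend via $N$ to an honest rational equivalence on the standard model, contradicting non-injectivity.

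In detail, I would fix a smooth projective irreducible complex surface $X$ with $H^2(X,\cO_X)\neq 0$ and take $k$ to be the $*$algebraic closure of $\str{\F}_P$ as in the preceding subsection. By the non-injectivity theorem there is a cycle $x\in Z_0(X_k)$ with $[x]\neq 0$ in $CH^2(X_k)$ but $N([x])=0$ in $\str{CH}^2(N(X_k))$. After fixing a projective embedding $X_k\hookrightarrow\P^n_k$ and an integer $d\in\N$ larger than $\deg X_k$ and the complexity of $x$, I would set $y:=0$ and assume the constant $C:=C(n,d)\in\N$ exists.

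Next, transfer of the hypothesised statement applied to the internal field $k$, the $*$closed immersion $N(X_k)\hookrightarrow\str{\P}^n_k$ (of $*$degree $<d$ by \cite{enlsch}[Cor.~6.18]) and the $*$rationally equivalent $*$cycles $N(x)$ and $0$ of $*$complexity $<d$ yields a $*$cycle $z\in\str{Z}_1(N(X_k)\times\str{\P}^1_k)$ of $*$complexity $<C$ with $N(x)=z_{|N(X_k)\times\{0\}}-z_{|N(X_k)\times\{1\}}$. Since $C\in\N$ and $N(X_k)\times\str{\P}^1_k=N(X_k\times\P^1_k)$, Lemma \ref{imageN} together with \cite{enlsch}[Cor.~6.21] applied to each $*$prime component produces a lift $\tilde z\in Z_1(X_k\times\P^1_k)$ with $N(\tilde z)=z$. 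The compatibility of $N$ with proper push-forward (Proposition \ref{sa:NcomWpf}) and with intersection products of properly intersecting cycles (Proposition \ref{sa:NforSmCor}) then gives $N(\tilde z_{|X_k\times\{0\}}-\tilde z_{|X_k\times\{1\}})=N(x)$; since $N$ is injective on cycle groups, this forces $x=\tilde z_{|X_k\times\{0\}}-\tilde z_{|X_k\times\{1\}}$, hence $[x]=0$ in $CH^2(X_k)$, contradicting the choice of $x$.

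The main obstacle is the third step, namely turning a $*$rational equivalence of standard $*$complexity into an honest rational equivalence on $X_k$. Two subpoints need care: first, that Lemma \ref{imageN}, which is formulated at the level of Chow groups, really lifts $z$ at the cycle level — this ultimately rests on \cite{enlsch}[Cor.~6.21] applied to $*$prime cycles of standard $*$degree; second, that the restrictions to the fibres over $0$ and $1$ commute with $N$, which requires $\tilde z$ to meet these fibres properly (achievable by choosing a general-position lift) and then reduces to the intersection-product compatibility already used in Proposition \ref{sa:NforSmCor}.
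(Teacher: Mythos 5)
Your argument is correct and is essentially the paper's own proof: the hypothesised uniform bound, after transfer, would force $N:CH_k(X)\rightarrow\str{CH}_k(N(X))$ to be injective via the image description of Lemma \ref{imageN} (descending the bounded-complexity $*$rational equivalence to a standard one), contradicting the non-injectivity theorem for surfaces over the $*$algebraic closure of a $*$finite field. You merely spell out the descent and compatibility details that the paper leaves implicit.
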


\begin{proof}
If the statement were true, then $N:CH_k(X)\rightarrow CH_k(N(X))$
would be injective by Lemma \ref{imageN}.
\end{proof}

\vspace{\abstand}

Next we want to see how our notion of complexity of a cycles
behaves under intersection products.
If we consider two cycles $x\in CH^i(X)$ and $y\in CH^j(X)$, both
of complexity less than $d$, it is natural to ask about the complexity of their
intersection product $x\cdot y \in CH^{i+j}(X)$. If we could write
$x=\sum_{i=1}^n \alpha_i [X_i]$ and $y=\sum_{j=1}^m \beta_j [Y_j]$
with $\alpha_i$, $\beta_j$, $n$, $m$, $\deg(X_i)$ ,$\deg(Y_j)<d$ and such that all $X_i$ and
$Y_j$ intersect properly, the complexity of $x\cdot y$ would be
$<d^4$. But in general one has to move the cycles in their rational
equivalence class to get proper intersections, and it is hard to
control the complexities during this process (cf. \ref{thmfalse}.
But using the previous result,
we can at least prove the existence of a uniform bound for the complexity of the product:

\vspace{\abstand}

\begin{thm}
For all $d,n\in\N$ there is a constant $C(d,n)$ with the following
property: For all fields $k$, all closed subschemes
$X\hookrightarrow\P^n_k$ of degree less than $d$, and cycles $x\in
CH^i(X)$ and $y\in CH^j(X)$, both of complexity
less than $d$, the product $x\cdot y\in CH^{i+j}(X)$ is of complexity
less than $C(d,n)$.
\end{thm}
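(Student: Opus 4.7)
The plan is to argue by contradiction, converting the failure of a uniform bound into an internal counterexample via transfer, and then using Lemma \ref{imageN} together with the compatibility of $N$ with the intersection product to derive absurdity.

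Suppose no constant $C(d,n)$ works. Fix $d,n\in\N$ for which the conclusion fails. Then for every standard $C\in\N$ there exist a field $k_C$, a closed subscheme $X_C\hookrightarrow\P^n_{k_C}$ of degree less than $d$, and cycles $x_C\in CH^i(X_C)$, $y_C\in CH^j(X_C)$ of complexity less than $d$ such that $x_C\cdot y_C$ has complexity at least $C$. Applying transfer to the internal statement ``for every $C$ such a configuration exists'' and evaluating at some $C\in\str{\N}\setminus\N$, we obtain an internal field $k$, an internal closed subscheme $X\hookrightarrow\str{\P}^n_k$ of internal degree less than $d$ (hence of standard degree, as $d\in\N$), and *cycles $x\in\str{CH}^i(X)$, $y\in\str{CH}^j(X)$ of *complexity less than $d$, with the property that the *complexity of $x\cdot y$ exceeds every standard integer.

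Now I want to realise $x$ and $y$ through the map $N$. Since $\P^n$ is defined over $\Z$ and $X$ has standard degree in $\str{\P}^n_k$, the criterion \cite{enlsch}[6.21] produces a closed subscheme $X_0\hookrightarrow\P^n_k$ of finite presentation over $k$ with $N(X_0)\cong X$. Lemma \ref{imageN} then applies: because $x,y$ have *complexity bounded by the standard integer $d$, they lie in the image of $N$, so there exist $\tilde x\in CH^i(X_0)$ and $\tilde y\in CH^j(X_0)$ with $N(\tilde x)=x$ and $N(\tilde y)=y$. The standard cycle $\tilde x\cdot\tilde y\in CH^{i+j}(X_0)$ has some (necessarily standard) complexity bound $C'\in\N$, chosen once $X_0,\tilde x,\tilde y$ are fixed.

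By the compatibility of $N$ with the intersection product established in Proposition/Definition \ref{de:NfMoCoh} (itself a consequence of the $\otimes$-triangulated compatibility of Proposition \ref{sa:NforDM}), one has $N(\tilde x\cdot\tilde y)=N(\tilde x)\cdot N(\tilde y)=x\cdot y$. Writing a representative of $\tilde x\cdot\tilde y$ as $\sum_\ell\alpha_\ell[Z_\ell]$ with $|\alpha_\ell|$, the number of summands, and $\deg(Z_\ell)$ all less than $C'$, and applying $N$ term by term (using $N([Z_\ell])=[N(Z_\ell)]$ and the *degree identity of \cite{enlsch}[6.18]), shows that $x\cdot y$ has *complexity less than $C'$. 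This contradicts the fact that the *complexity of $x\cdot y$ is larger than every standard integer, and so the uniform bound $C(d,n)$ must exist.

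The main point requiring care is really just the bookkeeping at the nonstandard step: one must use \cite{enlsch}[6.21] to get the standard model $X_0$ of $X$, and one must appeal to the ring-map property of $N$ on Chow groups to transport the intersection product across. Both are already in place, so once the transfer argument is set up cleanly, no extra calculation is needed; in particular, no explicit control of the complexity of $\tilde x\cdot\tilde y$ in terms of that of $\tilde x,\tilde y$ is required — existence suffices.
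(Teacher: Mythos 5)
Your argument is correct and follows essentially the same route as the paper: assume failure, transfer to obtain an internal counterexample with standard bounds $d$ on degree and complexity, descend via \cite{enlsch}[6.21] and Lemma \ref{imageN} to standard $X_0$, $\tilde x$, $\tilde y$, and use the compatibility of $N$ with the intersection product (\ref{de:NfMoCoh}) to contradict the unbounded *complexity of $x\cdot y$. The only point the paper adds that you omit is the preliminary reduction to fields finitely generated over their prime fields (using that the intersection product is compatible with field extension), which is needed so that the quantification over ``all fields'' lives inside the small category $\cR$ and the transfer step is legitimate.
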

\begin{proof}
All statements are about schemes or subschemes which are of finite
presentation over a field, and the intersection product behaves well
under field extension.  Therefore it is enough to consider all
fields which are finitely generated over their prime fields, and we
can choose a category of rings $\cR$ which contains all such fields.
Now we assume that the statement is false. Then by transfer there are
an internal field $K\in\cR$, a *closed subscheme
$X'\hookrightarrow\str\P^n_K$ of degree less than $d$ and *cycles
$x'\in\str{CH}^i(X)$ and $y'\in\str{CH}^j(X)$, both of complexity less than $d$,
such that the product $x'\cdot y' \in\str{CH}^{i+j}(X')$ is not of
complexity less than $n$ for all $n\in\N$. But because of our assumption about
the degree of $X'$ and the complexities of $x$ and $y$, there are a closed subscheme
$X\in\P^n_K$ and cycles $x\in CH^i(X)$ and $y\in CH(Y)$ with
$$N(X)=X', N(x)=x'\text{ and } N(y)=y'.$$
Furthermore, by \ref{de:NfMoCoh} we have $N(x\cdot y)=x'\cdot y'$. But the
complexity of $x\cdot y$ is less than $n_0$ for an $n_0\in\N$, and then
the complexity of $N(x\cdot y)$ is also less than $n_0\in\N$, a
contradiction.
\end{proof}
\vspace{\abstand}
\begin{bem}
This theorem corresponds to the fact that the intersection product is \emph{constructible},
proven in \cite{macintyre} by careful analysis of the construction given in \cite{fulton}.
\end{bem}

\vspace{\abstand}

\begin{bem}
With the same argument, a similar result can be shown for higher Chow groups.
\end{bem}

\vspace{\abstand}

\vspace{\abstand}

%%%%%%%%%%%%%%%%Lenght  and the Koszul complex%%%%%%%
\appendix
\section{Lengths and the Koszul complex}

\bigskip

\noindent
Let $k$ be an internal field,
let $A$ be a $k$-algebra of finite type,
let $\p\subseteq A$ be a prime ideal,
and let $M$ be a finitely generated $A$-module.

\vspace{\abstand}

\begin{lemma}\label{lemmaModLokZero}
  $M_\p=0\Longrightarrow[\s{M}]_{\s{\p}}=0$.
\end{lemma}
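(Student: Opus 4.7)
The plan is to exploit the fact that $M$ is finitely generated in order to convert the hypothesis $M_\p = 0$ into the existence of a single element $s \in A \setminus \p$ annihilating all of $M$, and then to transfer this annihilator to $\str{M}$ while checking that $s$ stays outside the enlarged ideal $\str{\p}$.

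First I would pick generators $m_1, \ldots, m_n$ of $M$ over $A$ and, using $M_\p = 0$, find $s_i \in A \setminus \p$ with $s_i m_i = 0$; their product $s \in A \setminus \p$ then satisfies $sM = 0$. Applying $*$ to this first-order statement (with $s$ as parameter) yields $s \cdot \str{M} = 0$ inside $\str{M}$, where $s$ is viewed in $\str{A}$ via the canonical map $A \to \str{A}$.

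The main step requiring care is showing that $s \notin \str{\p}$. A priori $\str{\p}$ is a larger ideal than $\p$ (it equals $\p \str{A}$), so one must rule out that $s$ picks up new relations after enlargement. The natural way to handle this is to apply the exact functor $*$ to the short exact sequence $0 \to \p \to A \to A/\p \to 0$, obtaining an identification $\str{A}/\str{\p} \cong \str{(A/\p)}$; since $s$ has nonzero image in $A/\p$ and the canonical map $A/\p \to \str{(A/\p)}$ is injective, the image of $s$ in $\str{A}/\str{\p}$ is nonzero, hence $s \notin \str{\p}$.

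Combining these observations, $s$ is an element of $\str{A} \setminus \str{\p}$ that annihilates $\str{M}$, so in the internal localization $[\str{M}]_{\str{\p}}$ the element $s$ is both a unit and acts as zero, which forces $[\str{M}]_{\str{\p}} = 0$.
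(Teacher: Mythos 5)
Your proof is correct and follows essentially the same route as the paper's: use finite generation to produce a single $f\in A\setminus\p$ with $fM=0$, pass to the enlargement to get $f\cdot\str{M}=0$, and conclude that the localization at $\str{\p}$ vanishes. The only difference is that you make explicit the step $f\notin\str{\p}$ (via exactness of the enlargement functor and $\str{\p}=\p\,\str{A}$), which the paper leaves implicit.
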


\begin{proof}
  \[
    M_\p=0\Longrightarrow
    \exists f\in A\setminus\p:\ fM=0\Longrightarrow
    f[\s{M}]=0\Longrightarrow
    [\s{M}]_{\s{\p}}=0.
  \]
\end{proof}

\vspace{\abstand}

\begin{lemma}\label{lemmaModLokFF}
  Let $\varphi:M'\longrightarrow M$ be a morphism of finitely generated
  $A$-modules such that $\kernel{\varphi_\p}=0$. Then
  \begin{enumerate}
    \item\label{lMLFFi}
      $\kernel{[\s{\varphi}]_{\s{\p}}}=0$ and
    \item\label{lMLFFii}
      $\kappa(\p)\cong\coker{\varphi_\p}\Longrightarrow
      \str{\kappa(\s{\p})}\cong\coker{[\s{\varphi}]_{\s{\p}}}$.
  \end{enumerate}
\end{lemma}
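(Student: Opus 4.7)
The plan is to deduce both parts from Lemma \ref{lemmaModLokZero}, relying on three ingredients: exactness of $\s{}$ on modules (\cite{enlsch}[Theorem 6.4]), exactness of localization, and the noetherianity of $A$ as a finitely generated $k$-algebra.

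For (\ref{lMLFFi}), I would set $K:=\kernel{\varphi}$. Noetherianity makes $K$ finitely generated, and exactness of localization gives $K_\p=\kernel{\varphi_\p}=0$. Applying Lemma \ref{lemmaModLokZero} to $K$ then yields $[\s{K}]_{\s{\p}}=0$. Exactness of $\s{}$ identifies $\s{K}$ with $\kernel{\s{\varphi}}$, and another use of exactness of localization identifies $\kernel{[\s{\varphi}]_{\s{\p}}}$ with $[\s{K}]_{\s{\p}}=0$.

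For (\ref{lMLFFii}), the strategy is to model $C:=\coker{\varphi}$ locally at $\p$ as a cyclic module and then transfer that presentation. The isomorphism $C_\p\cong\kappa(\p)$ allows me to pick $c\in C$ whose image in $C_\p$ generates it over $A_\p$, and then define $\psi:A\rightarrow C$ by $1\mapsto c$. Then $\psi_\p$ is surjective with kernel $\p A_\p$, i.e. $(\coker{\psi})_\p=0$ and $(\kernel{\psi})_\p=\p A_\p$. The first, fed through Lemma \ref{lemmaModLokZero} and exactness, makes $[\s{\psi}]_{\s{\p}}$ surjective.

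The most delicate step will be identifying $\kernel{[\s{\psi}]_{\s{\p}}}$ with $\s{\p}\s{A}_{\s{\p}}$, since $(\kernel{\psi})_\p=\p A_\p$ is only a statement about localizations and not about literal containments in $A$ that $\s{}$ can act on. This is exactly where noetherianity is essential: both $\kernel{\psi}$ and $\p$ are finitely generated, so the equality in $A_\p$ is witnessed by single elements $f,g\in A\setminus\p$ with $f\cdot\kernel{\psi}\subseteq\p$ and $g\cdot\p\subseteq\kernel{\psi}$. Transferring via $\s{}$ preserves these inclusions as well as the non-memberships $f,g\notin\p$, so $\s{f}\cdot\s{\kernel{\psi}}\subseteq\s{\p}$ and $\s{g}\cdot\s{\p}\subseteq\s{\kernel{\psi}}$ with $\s{f},\s{g}\in\s{A}\setminus\s{\p}$, yielding $[\s{\kernel{\psi}}]_{\s{\p}}=\s{\p}\s{A}_{\s{\p}}$. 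Combining with the already-established surjectivity,
$$\coker{[\s{\varphi}]_{\s{\p}}}=[\s{C}]_{\s{\p}}\cong \s{A}_{\s{\p}}/\s{\p}\s{A}_{\s{\p}}\cong\str{\kappa(\s{\p})},$$
as required.
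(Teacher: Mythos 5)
Your proof is correct, and part (i) is essentially the paper's own argument: both reduce the vanishing of the kernel localized at $\p$ to a statement witnessed after inverting a single $f\in A\setminus\p$ (you do this by feeding $K=\kernel{\varphi}$ into Lemma \ref{lemmaModLokZero}, while the paper does the spreading out inline with $\kernel{\varphi_f}=0$) and then use exactness of $\s{}$ and of localization. For part (ii) you take a genuinely different route. The paper spreads out the isomorphism itself: since $\coker{\varphi}$ is finitely generated and $A$ is noetherian, $\kappa(\p)\cong\coker{\varphi_\p}$ already holds over some $A_f$, i.e.\ $A_f/\p A_f\cong\coker{\varphi_f}$, and applying the exact functor $\s{}$ and localizing at $\s{\p}$ finishes, using $\s{\p}=\p[\s{A}]$. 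You instead build a cyclic presentation $\psi:A\rightarrow\coker{\varphi}$, transfer its surjectivity through Lemma \ref{lemmaModLokZero}, and identify $\kernel{[\s{\psi}]_{\s{\p}}}$ with $\s{\p}[\s{A}]_{\s{\p}}$ by witnessing the two inclusions between $\kernel{\psi}$ and $\p$ by elements $f,g\in A\setminus\p$; this needs the small extra observations you supply (a generator of $\coker{\varphi_\p}$ can be taken to come from $\coker{\varphi}$, and $\kernel{\psi_\p}=\p A_\p$ because isomorphic modules have equal annihilators), plus the fact --- used implicitly by the paper as well, e.g.\ in Lemma \ref{lemmaModLokZero} --- that an $f\in A\setminus\p$ stays outside $\s{\p}=\p[\s{A}]$. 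The paper's version is shorter and treats both parts uniformly by the same spreading-out pattern; yours trades that one-line spreading-out of an isomorphism for more explicit bookkeeping, but both rest on the same ingredients: noetherianity/finite generation, exactness of $\s{}$ and of localization, and the identity $\s{\p}=\p[\s{A}]$.
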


\begin{proof}
  \[
    \kernel{\varphi_\p}=0\Longrightarrow
    \exists f\in A\setminus\p:\ \kernel{\varphi_f}=0
    \stackrel{\mbox{\tiny{$\s{}$ exact}}}{\Longrightarrow}
    \kernel{[\s{\varphi}]_f}=0\Longrightarrow
    \kernel{[\s{\varphi}]_{\s{\p}}}=0,
  \]
  which proves \ref{lMLFFi}.
  For \ref{lMLFFii}, assume $\kappa(\p)\cong\coker{\varphi_p}$.
  Then
  \begin{multline*}
    \exists f\in A\setminus\p:\ A_f/\p A_f\cong\coker{\varphi_f}
    \stackrel{\mbox{\tiny{$\s{}$ exact}}}{\Longrightarrow} \\
    [\s{A}]_f/\p[\s{A}]_f\cong\coker{[\s{\varphi}]_f}\Longrightarrow
    \str{\kappa(\s{\p})}\cong\coker{[\s{\varphi}]_{\s{\p}}},
  \end{multline*}
  where we use $\s{\p}=\p[\s{A}]$ (\cite[2.5]{vandendries}).
\end{proof}

\vspace{\abstand}

\begin{lemma}\label{lemmaLength}
  $\length{A_\p}{M_p}=\slength{[\s{A}]_{\s{\p}}}{[\s{M}]_{\s{\p}}}
  \in\Nn\amalg\{\infty\}$.
\end{lemma}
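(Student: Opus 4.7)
The plan is to reduce to the case $M = A/\q$ for a prime $\q$ of $A$ via a prime filtration, and then analyze three cases according to how $\q$ compares to $\p$. Noetherianity of $A$ (a finite-type $k$-algebra, by Hilbert basis) and finite generation of $M$ yield a prime filtration $0 = M_0 \subsetneq \cdots \subsetneq M_n = M$ with $M_i/M_{i-1} \cong A/\q_i$. Exactness of $N$ produces a corresponding *filtration of $\s{M}$ with *subquotients $\s{A}/\s{\q_i}$. Localizing at $\p$ and $\s{\p}$ respectively and invoking additivity of length (and its transferred *counterpart) reduces the lemma to showing, for every prime $\q$ of $A$, that $\length{A_\p}{(A/\q)_\p} = \slength{[\s{A}]_{\s{\p}}}{[\s{(A/\q)}]_{\s{\p}}}$ as elements of $\Nn \amalg \{\infty\}$.

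For the three cases: if $\q \not\subseteq \p$, some $s \in \q \setminus \p$ kills $A/\q$ and is inverted in the localization, so $(A/\q)_\p = 0$ and Lemma \ref{lemmaModLokZero} gives $[\s{(A/\q)}]_{\s{\p}} = 0$; both lengths are $0$. If $\q = \p$, apply Lemma \ref{lemmaModLokFF}(ii) to the inclusion $\iota: \p \hookrightarrow A$: $\coker{\iota_\p} = \kappa(\p)$, hence $[\s{(A/\p)}]_{\s{\p}} = \coker{[\s{\iota}]_{\s{\p}}} \cong \str{\kappa(\s{\p})}$, the residue field of the *local ring $[\s{A}]_{\s{\p}}$, of *length $1$. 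If $\q \subsetneq \p$ strictly, pick a standard $f \in \p \setminus \q$: it is a non-zerodivisor on the integral domain $A/\q$ and, as $f \in \p$, a non-unit in $(A/\q)_\p$. The chain $(A/\q)_\p \supsetneq f(A/\q)_\p \supsetneq f^2(A/\q)_\p \supsetneq \cdots$ is strictly descending by Nakayama, so $\length{A_\p}{(A/\q)_\p} = \infty$. Exactness of $N$ preserves the non-zerodivisor property, and $f \in \s{\p} = \p[\s{A}]$, so internal Nakayama extends the strictness to every $n \in \str{\N}$, forcing the *length of $[\s{(A/\q)}]_{\s{\p}}$ to equal $\infty$ as well.

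The main obstacle is the strict-containment subcase $\q \subsetneq \p$, where one must verify that the *length truly lands in $\Nn \amalg \{\infty\}$ as $\infty$ rather than as some unbounded hyperfinite integer. The device of starting with a standard $f \in \p \setminus \q$ is essential: internal Nakayama applied to such an $f$ yields a *strict descending chain indexed by the entirety of $\str{\N}$, ruling out hyperfinite *length. The remaining two cases, together with the reduction via prime filtration, are formal applications of Lemmas \ref{lemmaModLokZero} and \ref{lemmaModLokFF}, exactness of $N$, and transfer of additivity of length.
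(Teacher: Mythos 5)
Your proof is correct, but it follows a genuinely different route from the paper's. The paper works directly with chains: in the case $\length{A_\p}{M_\p}=l<\infty$ it takes a composition series of $M_\p$, descends it to a tower of $A_f$-modules for a suitable $f\in A\setminus\p$, and pushes it through Lemmas \ref{lemmaModLokZero} and \ref{lemmaModLokFF} to obtain a *composition series of $[\s{M}]_{\s{\p}}$ with all subquotients $\str{\kappa}(\s{\p})$, whence $\str{l}=l$; in the infinite case it descends, for each standard $n$, a length-$n$ submodule and concludes only that $\str{l}$ lies in $(\Nns\setminus\Nn)\amalg\{\str{\infty}\}$. You instead do d\'evissage: a prime filtration of $M$, exactness of $\s{}$ and additivity of (*)length reduce everything to $M=A/\q$, and the three cases $\q\not\subseteq\p$, $\q=\p$, $\q\subsetneq\p$ are settled by Lemma \ref{lemmaModLokZero}, Lemma \ref{lemmaModLokFF}(ii), and a Nakayama chain $f^nR$ with $f\in\p\setminus\q$ standard, respectively. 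What your approach buys is a sharper conclusion in the infinite case: the internal descending chain indexed by all of $\str{\N}$ shows the *length is literally $\str{\infty}$, not merely an unlimited hypernatural, which makes the equality in $\Nn\amalg\{\infty\}$ cleaner than the paper's reading; what the paper's approach buys is that it never needs Noetherianity, prime filtrations, or primeness of $\q[\s{A}]$, only the two localization lemmas and spreading out of finitely generated modules. Two small points you should make explicit: in the case $\q\subsetneq\p$ the Nakayama argument needs $R:=[\s{(A/\q)}]_{\s{\p}}\neq 0$, which follows since $\s{\q}=\q[\s{A}]\subseteq\p[\s{A}]=\s{\p}$; and the identification of the *subquotients of the filtration with $\s{A}/\s{\q_i}$ uses exactness of $\s{}$ together with $\s{\q_i}=\q_i[\s{A}]$ (as in \cite[2.5]{vandendries}), both available in the paper's framework.
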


\begin{proof}
  Put $l:=\length{A_\p}{M_\p}$ and
  $\str{l}:=\slength{[\s{A}]_{\s{\p}}}{[\s{M}]_{\s{\p}}}$.
  First consider the case $l<\infty$.
  Then by the definition of "length", there exists a chain
  \[
    0=M_0\subseteq
    M_1\subseteq
    M_2\subseteq
    \ldots\subseteq
    M_{l-1}\subseteq
    M_l=M_\p
  \]
  of (finitely generated) $A_\p$-modules
  satisfying $\kappa(\p)\cong M_i/M_{i-1}$ for $i=1,\ldots, l$.
  Since the $M_i$ are finitely generated, there exist an $f\in A\setminus\p$
  and a tower
  \[
    0=M'_0\xrightarrow{\varphi_0}
    M'_1\xrightarrow{\varphi_1}
    M'_2\xrightarrow{\varphi_2}
    \ldots\xrightarrow{\varphi_{l-2}}
    M'_{l-1}\xrightarrow{\varphi_{l-1}}
    M'_l=M_f
  \]
  of (finitely generated) $A_f$-modules with $(M'_i)_\p\cong M_i$
  for $i=0,\ldots,l$.
  Applying \ref{lemmaModLokZero} and \ref{lemmaModLokFF}
  to $A_f$ instead of $A$,
  it follows that we have a chain
  \[
    0=\tilde{M_0}\subseteq
    \tilde{M_1}\subseteq
    \tilde{M_2}\subseteq
    \ldots\subseteq
    \tilde{M_{l-1}}\subseteq
    \tilde{M_l}=M_\p
  \]
  of *finitely generated $[\s{A}]_{\s{\p}}$-modules,
  where $\tilde{M_i}:=[\s{M'_i}]_{\s{\p}}$ for $i=0,\ldots,l$.
  Furthermore,
  $\str{\kappa}(\s{\p})\cong\tilde{M}_i/\tilde{M}_{i-1}$ for $i=1,\ldots, l$,
  which proves $\str{l}=l$.

  Now let $M_\p$ have infinite length, and choose an arbitrary $n\in\Np$.
  Since $M_\p$ has infinite length, there exists an $A_\p$-submodule
  $M'$ of $M_\p$ of length $n$, and $M'\cong M''_\p$ for a suitable
  $f\in A\setminus\p$ and an $A_f$-module $M''$.
  By what has already been proven (taking $A_f$ instead of $A$),
  $\slength{[\s{A}]_{\s{\p}}}{[\s{M''}]_{\s{\p}}}=n$, so
  $[\s{M}]_{\s{\p}}$ contains a *submodule of *length $n$.
  Since $n$ was chosen arbitrarily, $\str{l}\geq n$ for all $n\in\Np$,
  so $\str{l}$ lies in $(\Nns\setminus\Nn)\amalg\{\str{\infty}\}$
  and is hence \emph{infinite}.
\end{proof}

\vspace{\abstand}

Now we consider an element $m\in M$ and the Koszul complex
$$\mathcal{K}(M,m):= 0\rightarrow A \rightarrow M \rightarrow \Lambda^2
M \rightarrow \dots$$
In fact this is a bounded complex of finitely generated A-modules.
\begin{lemma}\label{le:NcomWithKoszul}
$$N(\mathcal{K}(M,m))=\str{\mathcal{K}}(N(M),N(m))$$
\end{lemma}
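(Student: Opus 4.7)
The plan is to reduce the statement to the fact that the enlargement functor $*$ is exact on finitely generated modules and commutes with tensor products (Theorem 6.4 of \cite{enlsch}), so that it commutes with all the functorial constructions out of which the Koszul complex is built. Concretely, $\mathcal{K}(M,m)$ is assembled from the pair $(M,m)$ by forming exterior powers $\Lambda^{\bullet}_A M$ and taking as differential $d_k: \Lambda^k_A M \to \Lambda^{k+1}_A M$, $\omega \mapsto m\wedge\omega$. To prove the lemma, it is enough to show that $*$ (equivalently $N$ on the level of finitely generated modules) preserves each of these ingredients.

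First I would show the identification $\str{\Lambda^k_A M} \cong \Lambda^k_{\str A}\str M$ for every $k\in\N$. Since $\Lambda^k_A M$ is by definition the quotient of $M^{\otimes k}$ by the submodule generated by tensors with repeated entries, and since $*$ is an exact $\otimes$-compatible functor on finitely generated $A$-modules, one has successively
\[
  \str{(M^{\otimes_A k})} \cong (\str M)^{\otimes_{\str A} k}
  \quad\text{and}\quad
  \str{\Lambda^k_A M} \cong \Lambda^k_{\str A}\str M,
\]
where in the second step one uses that $*$ takes the image of the antisymmetrisation relations in $M^{\otimes k}$ to the analogous internal relations in $(\str M)^{\otimes k}$ (again by exactness). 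This is really just an application of transfer to the functorial definition of $\Lambda^k(-)$.

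Next I would check compatibility of the differentials. The map $d_k$ is the $A$-linear map determined by $\omega \mapsto m\wedge\omega$; by naturality of $*$, the induced map $\str d_k : \str{\Lambda^k M} \to \str{\Lambda^{k+1} M}$ corresponds under the identifications of the previous step to $\omega \mapsto \str m \wedge \omega$, which is exactly the differential of $\str{\mathcal{K}}(\str M,\str m)$. Assembling these termwise identifications into an isomorphism of complexes proves the lemma.

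The only substantive point — and the one potentially worth spelling out carefully — is the identification $\str{\Lambda^k_A M} \cong \Lambda^k_{\str A}\str M$ as \emph{internal} $\str A$-modules with their internal antisymmetric structure, rather than merely as $\str A$-modules. This is not a calculation but a transfer argument: the construction $M\mapsto (\Lambda^{\bullet} M, \wedge)$ is a functor definable in the superstructure, so its $*$-image is the internal exterior-algebra construction, and the image of the pair $(M,m)$ is the pair $(\str M,\str m)$. Once this identification is in place, the rest of the proof is purely formal.
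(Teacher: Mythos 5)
Your argument is essentially the paper's own proof, just with the details filled in: the paper likewise writes out the internal Koszul complex by transfer and then reduces to the facts that $N$ commutes with tensor products (citing \cite{enlsch}) and, "similarly", with alternating powers. Your explicit treatment of $\str{\Lambda^k_A M}\cong\Lambda^k_{\str A}\str M$ via exactness and the quotient presentation, plus the naturality check for the differential $\omega\mapsto m\wedge\omega$, is exactly the part the paper leaves to the reader, so the proposal is correct and takes the same route.
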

\begin{proof}
By transfer we have
$$\str{\mathcal{K}}(N(M),N(m)):= 0\rightarrow N(A) \rightarrow N(M) \rightarrow \Lambda^2
N(M) \rightarrow \dots$$
In \cite{enlsch}[5.14] it is shown that $N$ for modules commutes with
tensor products. Similarly one can show that $N$ commutes with
alternating products. So the claim follows.
\end{proof}
\vspace{\abstand}

%%%%%%%%%%%%%%%%%%%%%%%%%%%%%%%%%%%%%%%%%%%%%%%%%%%%%%%%%%%%%%%%%%%%%%%%%%%%%%%

\bibliographystyle{alpha}
\bibliography{../Literatur}

\end{document}